\newcommand{\EE}{\mathbb{E}}
\newcommand{\RR}{\mathbb{R}}
\newcommand{\CC}{\mathbb{C}}
\newcommand{\NN}{\mathbb{N}}
\newcommand{\ZZ}{\mathbb{Z}}
\newcommand{\FF}{\mathbb{F}}
\newcommand{\1}{1}
\newcommand{\C}{\mathcal{C}}
\newcommand{\SSS}{\mathcal{S}}
\newcommand{\hh}{{\underline{h}}}
\newcommand{\yy}{{\underline{y}}}
\newcommand{\ww}{{\underline{w}}}
\theoremstyle{theorem}
\newtheorem{theorem}{Theorem}
\newtheorem{lemma}{Lemma}
\theoremstyle{definition}
\title[Further bounds in the polynomial Szemer\'{e}di theorem]{Further bounds in the polynomial Szemer\'{e}di theorem over finite fields}
\author{Borys Kuca}
\address{University of Manchester}
\email{borys.kuca@manchester.ac.uk}
\date{}
\begin{document}
\maketitle

\begin{abstract}
    We provide upper bounds for the size of subsets of finite fields lacking the polynomial progression $$ x, x+y, ..., x+(m-1)y, x+y^m, ..., x+y^{m+k-1}.$$ These are the first known upper bounds in the polynomial Szemer\'{e}di theorem for the case when polynomials are neither linearly independent nor homogeneous of the same degree. We moreover improve known bounds for subsets of finite fields lacking arithmetic progressions with a difference coming from the set of $k$-th power residues, i.e. configurations of the form $$ x, x+y^k, ..., x+(m-1)y^k.$$ Both results follow from an estimate of the number of such progressions in an arbitrary subset of a finite field.
\end{abstract}


\section{Introduction}
Generalizing Szemer\'{e}di's theorem on arithmetic progressions in subsets of integers \cite{szemeredi_1975}, Bergelson and Leibman proved that each dense subset of $\ZZ$ contains a configuration of the form $x, x+P_1(y), ..., x+P_m(y)$, where $y\in\ZZ\setminus{\{0\}}$ and $P_1, ..., P_m$ are polynomials with integer coefficients and zero constant term \cite{bergelson_leibman_1996}. Their proof, based on ergodic theory, does not give explicit quantitative bounds. Although no general bounds are known so far, they exist in certain special cases, for instance for $x, x+y^k, ..., x+(m-1)y^k$ with $m\geqslant 2$ and $k>1$ \cite{prendiville_2017} or for $x, x+y, x+y^2$\cite{peluse_prendiville_2019}. In the finite field analogue of the question, when we are looking for bounds on the size of $A\subset\FF_q$ lacking $x, x+P_1(y), ..., x+P_m(y)$, bounds are known in the case of $P_1, ..., P_m$ being linearly independent \cite{peluse_2019b}.

In this paper, we give the first explicit upper bounds for the sizes of subsets of finite fields lacking certain polynomial progressions. Our main result is the following.

\begin{theorem}\label{main theorem, special case}
Let $m,k\in\NN_+$, and $p$ be a prime. Suppose that $A\subset\FF_p$ lacks the progression
\begin{align}\label{union of AP and GP}
    x,\; x+y,\; ...,\; x+(m-1)y,\; x+y^m,\; ...,\; x+y^{m+k-1}
\end{align}
with $y\neq 0$. Then
\[ |A|\ll\begin{cases}
p^{1-c},\; &m = 1,2,\\
p\dfrac{(\log \log p)^4}{\log p},\; &m = 3,\\
p{(\log p)^{-c}},\; &m = 4,\\
{p}{(\log\log p)^{-c}}, \; &m>4
\end{cases}
\]
where all constants are positive, and the implied constant depends on $k$ and $m$ while $c$ depends only on $m$. For $m>4$, one can take the exponent $c$ to equal $c = 2^{-2^{m+9}}$.
\end{theorem}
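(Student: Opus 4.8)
The plan is to derive the theorem from a single \emph{counting estimate} for the progression \eqref{union of AP and GP} and then feed it into the known quantitative bounds for arithmetic progressions. For $A\subseteq\FF_p$ of density $\alpha=|A|/p$ set $T(A)=\EE_{x,y\in\FF_p}\prod_{i=0}^{m-1}1_A(x+iy)\prod_{j=0}^{k-1}1_A(x+y^{m+j})$; the aim is to show that $T(A)$ agrees with its heuristic value up to an error controlled by $\|1_A-\alpha\|_{U^{m-1}(\FF_p)}$ (to a fixed power depending only on $m$) together with a power-saving term $O_{m,k}(p^{-c_0})$ coming from the nonlinear monomials. The essential point is that the \emph{only} Gowers norm forced upon us is $U^{m-1}$ --- the same norm that governs the $m$-term progression $x,x+y,\dots,x+(m-1)y$ --- so that the terms $x+y^m,\dots,x+y^{m+k-1}$ affect only the implied constants and the power-saving term, not the order of the final bound. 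Granting this, suppose $A$ contains no nontrivial copy of \eqref{union of AP and GP}; then $T(A)$ is at most the $y=0$ contribution $\alpha/p$, so either $\alpha\ll_{m,k}p^{-c_0}$ (and we are done with room to spare) or $\|1_A-\alpha\|_{U^{m-1}(\FF_p)}\gg_{m,k}\alpha^{O_m(1)}$. In the latter case one feeds this non-uniformity into the quantitative inverse theorem for the $U^{m-1}$-norm and iterates a density increment exactly as in the known quantitative proofs of Szemer\'{e}di's theorem --- Bloom's bound for $3$-term progressions when $m=3$ (which is what produces the $(\log\log p)^4/\log p$ estimate), the Green--Tao bound when $m=4$, and Gowers' bound when $m\geq 5$ (whose tower-type dependence yields the explicit $c=2^{-2^{m+9}}$) --- and the stated bounds drop out. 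For $m=1,2$ the monomials $y,y^m,\dots,y^{m+k-1}$ are linearly independent, so the configuration is already covered by Peluse's power-saving bounds for linearly independent polynomial progressions over finite fields, and we invoke these directly.

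Proving the counting estimate is the technical core, and I would approach it as follows. Writing $1_A=\alpha+f$ and expanding $T(A)$ multilinearly, it suffices to bound every term carrying at least one factor of $f$; equivalently one proves a generalized von Neumann inequality controlling
\[
\EE_{x,y}\;\prod_{i=0}^{m-1}g_i(x+iy)\;\prod_{j=0}^{k-1}h_j(x+y^{m+j})
\]
for $1$-bounded $g_i,h_j$ in terms of $\min_i\|g_i\|_{U^{m-1}}$, modulo an admissible power-saving error that absorbs the $h_j$. The first move is PET induction --- iterated van der Corput and Cauchy--Schwarz in the $y$ variable --- which, after enough differencing steps, linearizes each monomial $y^{m+j}$ but controls the average only by a Gowers norm $\|g_i\|_{U^{s}}$ of large degree $s=s(m,k)$, far bigger than $m-1$ and moreover growing with $k$. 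To descend from $U^{s}$ down to $U^{m-1}$ I would run the degree-lowering argument of Peluse and Prendiville: the inverse theorem for the $U^{s}$-norm realizes the average as a correlation of $f$ with a degree-$(s-1)$ nilsequence, and the equidistribution of the polynomial orbit $y\mapsto(y^{m},\dots,y^{m+k-1})$ --- quantified by Weyl-sum estimates, which is precisely where the $p^{-c_0}$ savings come from --- forces this correlation to be governed by $\|f\|_{U^{s-1}}$; iterating collapses the degree down to $U^{m-1}$, where the linear block of the configuration blocks any further reduction.

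I expect the degree-lowering step to be the main obstacle, exactly because the configuration is neither homogeneous nor linearly independent: one must handle the linear block $x,x+y,\dots,x+(m-1)y$ --- which genuinely needs a degree-$(m-1)$ Gowers norm and cannot be lowered further --- together with the nonlinear monomials $x+y^{m+j}$ --- which must be shown to cost only a power of $p$ --- and one must set up the PET scheme so that the function being tracked is never differenced in a way that spoils the reduction. A secondary difficulty is that the concluding density increment must be run in a form to which the counting estimate still applies at every scale: the increments should be located on genuine arithmetic progressions (or Bohr sets on which the monomial maps stay equidistributed), so that a rescaled version of the counting estimate survives the passage to a substructure. Verifying this compatibility, and tracking the quantitative losses carefully through the iteration, is what separates the four regimes of $m$ and fixes each bound.
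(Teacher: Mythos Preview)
Your high-level plan --- prove a counting estimate for the full progression, then feed it into the known quantitative bounds for $m$-term APs --- matches the paper's. But both halves are executed quite differently, and your version of the second half has a genuine obstacle.

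\textbf{The counting estimate.} The paper does not aim for a von Neumann--type bound by $\min_i\|g_i\|_{U^{m-1}}$. Its Theorem~\ref{counting theorem for the more difficult configuration} is instead a \emph{factorisation}:
\[
\EE_{x,y}\prod_{i=0}^{m-1}f_i(x+iy)\prod_{j=m}^{m+k-1}f_j(x+y^{j})
\;=\;\Lambda_m(f_0,\dots,f_{m-1})\cdot\prod_{j=m}^{m+k-1}\EE f_j\;+\;O(p^{-c}),
\]
proved by an induction that replaces the nonlinear weights $f_m,\dots,f_{m+k-1}$ one at a time by their averages. The degree lowering (Lemma~\ref{degree lowering}) is applied not to $1_A-\alpha$ but to the \emph{dual function} $F(x)=\EE_y\prod_{j\neq m+r}f_j(x+P_j(y)-P_{m+r}(y))$, and it uses only the $U^2$ inverse theorem together with repeated Cauchy--Schwarz --- no nilsequences and no higher-order inverse theorem. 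Your description of degree lowering as ``the inverse theorem for the $U^s$-norm realises the average as a correlation with a degree-$(s-1)$ nilsequence'' is not how Peluse--Prendiville or this paper proceed, and it is not clear such an argument would deliver power-saving error terms.

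\textbf{The endgame.} This is where the real gap lies. Once one has the factorisation, taking $f_j=1_A$ and using that $A$ lacks the progression gives $\Lambda_m(1_A)\cdot\alpha^k\ll p^{-c}$. A Varnavides averaging argument (Lemma~\ref{Averaging over progressions}) then yields $\Lambda_m(1_A)\gg M_m(\alpha/2)^{-2}$, where $M_m$ is essentially the inverse of the Szemer\'edi bound $r_m$; combining the two and inverting gives $\alpha\ll s_m(cp^c)$, and one simply quotes Bloom, Green--Tao and Gowers as black boxes. No inverse theorem for $U^{m-1}$ is invoked and no density increment is rerun. Your proposed density increment, by contrast, would have to pass to sub-progressions of $\FF_p$; on a progression of length $N$ the monomials $y^{m+j}$ force $y$ into a range of size $N^{1/(m+k-1)}$, so the counting estimate you would need at each step is effectively the \emph{integer} version of the problem --- substantially harder, and precisely what the paper's $\FF_p$ argument is designed to avoid. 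You flag this compatibility issue yourself but do not resolve it; the paper sidesteps it entirely.
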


It is worth noting that the exponent $c$ appearing in Theorem \ref{main theorem, special case} for $c>4$ is the same as the exponent that appeared in Gowers' bounds in Szemer\'{e}di theorem \cite{gowers_2001}.

One can think of (\ref{union of AP and GP}) as the union of an arithmetic progression and a shifted geometric progression. The cases $m=1$ and $m=2$ are in fact identical, and the bound in this case comes from the work of Peluse \cite{peluse_2019b}. Our contribution is the $m>2$ case, for which there are no previous bounds in the literature. This is the first polynomial progression for which quantitative bounds are known where polynomials in $y$ are neither linearly independent nor homogeneous of the same degree. Theorem \ref{main theorem, special case} is a special case of a more general result, which generalizes \cite{peluse_2019b} and uses it as a base case for induction.

\begin{theorem}\label{main theorem, general case}
Let $m,k\in\NN_+$, $m\geqslant 3$, and $P_m$, ..., $P_{m+k-1}$ be polynomials in $\ZZ[y]$ such that $$a_m P_m + ... + a_{m+k-1}P_{m+k-1}$$ has degree at least $m$ unless $a_m = ... = a_{m+k-1}=0$ (in particular, $P_m$, ..., $P_{m+k-1}$ are linearly independent and each of them has degree at least $m$). Let $r_m(p)$ be the size of the largest subset of $\FF_p$ lacking $m$-term arithmetic progressions and ${s_m: [p_0,\infty)\to (0,1]}$ be a decreasing function satisfying ${r_m(p)\leqslant p\cdot s_m(p)}$ for all primes $p\geqslant p_0>0$, with $s_m(n)\to 0$ as $n\to\infty$. If $A\subset \FF_p$ lacks
\begin{align}\label{generalized union of AP and GP}
    x, x+y, ..., x+(m-1)y, x+P_m(y), ..., x+P_{m+k-1}(y)
\end{align}
with $y\neq 0$, then
\begin{align*}
    |A|\ll p\cdot s_m(c p^{c})
\end{align*}
where the constants $C$, $c$, and the implied constant depend on $m, k$, and $P_m$, ..., $P_{m+k-1}$ but not on the choice of $s_m$.
\end{theorem}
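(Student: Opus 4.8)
The strategy is an induction on $k$, the number of ``geometric-type'' polynomials, with the work of Peluse \cite{peluse_2019b} on linearly independent polynomial progressions serving as the base of the induction. Recall that the configuration \eqref{generalized union of AP and GP} consists of a length-$m$ arithmetic progression $x, x+y, \dots, x+(m-1)y$ together with $k$ further terms $x+P_m(y), \dots, x+P_{m+k-1}(y)$. When $k=0$ (or, with the obvious modifications, when $m=3$ and we are in a situation reducible to a linearly independent configuration) the hypotheses on $P_m, \dots, P_{m+k-1}$ force the whole system $y, P_m(y), \dots, P_{m+k-1}(y)$ to behave like a linearly independent polynomial system after a change of variables, and the count of such progressions in an arbitrary $A\subset\FF_p$ — of the form $(\text{density})^{\#\text{terms}}\cdot p^2 + O(p^{2-c})$ with an absolute power saving — is exactly what \cite{peluse_2019b} provides. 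The inductive step should remove one polynomial at a time: given the configuration with polynomials $P_m, \dots, P_{m+k-1}$, one wants to show that the number of such progressions in $A$ is within $O(p^{2-c})$ of $|A|/p$ times the number of progressions of the shorter configuration obtained by deleting $x+P_{m+k-1}(y)$, so that lacking the long configuration forces $A$ to essentially lack the shorter one, up to acceptable error, and then invoke the inductive hypothesis. Tracking constants carefully through this recursion produces the bound $|A|\ll p\cdot s_m(cp^c)$, where the loss in the argument of $s_m$ is the accumulated power saving.

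Concretely, I would set up the counting operator
\[
    T(A) = \EE_{x\in\FF_p}\EE_{y\in\FF_p^\times}\, \mathbb{1}_A(x)\,\mathbb{1}_A(x+y)\cdots\mathbb{1}_A(x+(m-1)y)\,\mathbb{1}_A(x+P_m(y))\cdots\mathbb{1}_A(x+P_{m+k-1}(y)),
\]
so that $A$ lacking \eqref{generalized union of AP and GP} means $T(A)$ is at most the diagonal contribution, $O(1/p)$. The core estimate to prove is a \emph{generalized von Neumann theorem}: $T(A)$, measured against its ``expected'' main term, is controlled by a Gowers-type norm of $\mathbb{1}_A - |A|/p$ localized to the arithmetic-progression part of the configuration — one expects control by the $U^{m-1}$ norm, matching the length-$m$ AP. This is achieved by a sequence of Cauchy--Schwarz and van der Corput steps (PET induction), where at each stage one differences in the $y$ variable; the polynomials $P_j$ of degree $\geq m$ are ``heavier'' than the linear terms and get peeled off / absorbed into smooth error terms using Weil-type exponential sum bounds, exactly because their degrees exceed $m$ and they are linearly independent as required by the hypothesis. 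Once $T(A)$ is controlled by $\|\mathbb{1}_A - |A|/p\|_{U^{m-1}}^{c'}$ (times lower-order terms), a large value of this Gowers norm yields, by the inverse theorem / density-increment machinery underlying the quantitative Szemer\'{e}di theorem, a subprogression on which $A$ has increased density; iterating, or more cleanly comparing directly with $r_m(p)$, forces $|A|\le p\cdot s_m(p)$ unless $T(A)$ is close to $(|A|/p)^{m+k}$, which is bounded below away from $O(1/p)$ once $|A|/p \gg s_m(cp^c)$. Unwinding gives the stated bound.

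The main obstacle — and the reason this is not merely a black-box application of \cite{peluse_2019b} — is the PET induction in the presence of mixed degrees: the linear part and the polynomial part of the configuration interact, and naive differencing in $y$ either destroys the AP structure one wants to retain or fails to produce a genuine power saving because the $P_j$ are not homogeneous of a common degree. The delicate point is to choose the order and the variables of the van der Corput steps so that (i) the arithmetic-progression block is preserved intact long enough to be captured by a single $U^{m-1}$ norm, and (ii) each differencing against a polynomial of degree $\geq m$ genuinely gains a factor of $p^{-c}$ via Weil bounds rather than merely rearranging the configuration. Managing the bookkeeping so that the accumulated saving depends only on $m$ (not on $k$ or the $P_j$) — which is what the theorem claims for the exponent in $s_m(cp^c)$, with only the implied constant depending on the data — requires that the induction on $k$ does not degrade the exponent, i.e. that each removed polynomial costs only a multiplicative constant in front of $p^c$ and not a worsening of $c$ itself. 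A secondary technical issue is handling the degenerate or low-characteristic cases (small $p$, or $p$ dividing leading coefficients of the $P_j$), which are absorbed into the implied constant by taking $p$ large in terms of $m,k$ and the coefficients.
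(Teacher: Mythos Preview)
Your proposal has a genuine gap in the mechanism by which the polynomial terms $P_m,\dots,P_{m+k-1}$ are removed. You expect PET induction to yield control of the counting operator by $\|1_A-\alpha\|_{U^{m-1}}$, with the higher-degree polynomials ``peeled off'' via Weil-type bounds; but PET induction (Lemma~\ref{PET induction}) only produces control by $\|f_j\|_{U^s}$ for some $s$ that depends on the degrees and leading coefficients of \emph{all} the polynomials involved, and this $s$ is in general much larger than $m-1$. The repeated van~der~Corput differencing does not respect the separation between the linear block and the $P_j$'s: each application alters every term simultaneously, so there is no step at which the AP part sits untouched while Weil bounds dispose of the rest. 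Consequently your route to Szemer\'{e}di-type bounds via $U^{m-1}$ control and the inverse-theorem/density-increment machinery is not available without a further idea.

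What the paper actually does is prove a discorrelation statement (Theorem~\ref{counting theorem for the more difficult configuration}): the count over the full configuration equals $\Lambda_m(f_0,\dots,f_{m-1})\prod_{j=m}^{m+k-1}\EE f_j+O(p^{-c})$, i.e.\ the nonlinear terms decouple entirely. The key device you are missing is the \emph{degree-lowering} argument (Lemma~\ref{degree lowering}), which shows that for the dual function $F$ obtained after a Hahn--Banach decomposition (Lemma~\ref{Hahn-Banach decomposition}), the a~priori $U^s$ control can be pushed all the way down to $U^2$ and then to $U^1$ control of the individual $f_j$'s (Lemmas~\ref{$U^1$ control of the dual} and~\ref{$U^1$ control of the operator}). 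This is what permits replacing each $f_j$ for $j\geqslant m$ by its average. Once the counting theorem is in hand, the deduction of Theorem~\ref{main theorem, general case} is short and uses no inverse theorem at all: one applies Varnavides (Lemma~\ref{Averaging over progressions}) to bound $\Lambda_m(1_A)$ below by $M_m(\alpha/2)^{-2}$, combines with $T(A)=O(p^{-1})$ to get $\alpha^k M^{-2}\ll p^{-c}$, and then invokes Behrend's construction to absorb $\alpha^k$ into $M^{-1}$, yielding $M\gg p^{c}$ and hence $\alpha\ll s_m(cp^c)$.
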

The best bounds for $r_m$ currently in the literature are of the form
\[
r_m(p)\ll \begin{cases}
p\dfrac{(\log \log p)^4}{\log p},\; &m = 3 \;\text{\cite{bloom_2016}},\\
p(\log p)^{-c},\; &m = 4\; \text{\cite{green_tao_2017}},\\
p(\log\log p)^{-c},\; &m > 4\; \text{\cite{gowers_2001}}
\end{cases}
\]
yielding the bounds given in Theorem \ref{main theorem, special case}. The content of Theorem \ref{main theorem, general case} is that up to the values of constants, our bounds are of the same shape as the bounds in Szemer\'{e}di theorem. One cannot hope to do better, as each set containing (\ref{generalized union of AP and GP}) necessarily contains an $m$-term arithmetic progression. The function $s_m$ plays only an auxiliary role, allowing us to conveniently express known bounds in Szemer\'{e}di's theorem as functions defined over positive real numbers.

We prove Theorem \ref{main theorem, general case} by first proving an estimate for how many polynomial progressions a set $A\subset\FF_p$ has. This counting result is the heart of this paper; once it is proved, deducing Theorem \ref{main theorem, general case} is straightforward.

\begin{theorem}[Counting theorem]\label{counting theorem for the more difficult configuration}
Let $m\in\NN_+$ and $P_m$, ..., $P_{m+k-1}$ be polynomials in $\ZZ[y]$ such that $$a_m P_m + ... + a_{m+k-1}P_{m+k-1}$$ has degree at least $m$ unless $a_m = ... = a_{m+k-1}=0$ (in particular, $P_m$, ..., $P_{m+k-1}$ are linearly independent and each of them has degree at least $m$). Suppose that $f_0, ..., f_{m+k-1}:\FF_p\to\CC$ satisfy $|f_j(x)|\leqslant 1$ for each $0\leqslant j\leqslant m+k-1$ and $x\in\FF_p$. Then
\begin{align}\label{equality in main counting theorem}
    &\EE_{x,y\in\FF_p}\prod_{j=0}^{m-1}f_j(x+jy)\prod_{j=m}^{m+k-1}f_j(x+P_j(y))\\
    \nonumber
    =\; &\EE_{x,y\in\FF_p}\prod_{j=0}^{m-1}f_j(x+jy)\left(\prod_{j=m}^{m+k-1}\EE f_j\right) + O(p^{-c})
\end{align}
where all the constants are positive and depend on $m, k$ and polynomials $P_m$, ..., $P_{m+k-1}$ but not on $f_0, ..., f_{m+k-1}$.
\end{theorem}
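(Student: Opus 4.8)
\section*{Proof proposal for Theorem~\ref{counting theorem for the more difficult configuration}}

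The plan is to prove \eqref{equality in main counting theorem} by induction on $m$, with the case $m\le 2$ — in which the polynomials $y,P_m,\dots,P_{m+k-1}$ are linearly independent, so that Peluse's estimate \cite{peluse_2019b} applies directly — as the base case, and the inductive step shortening the arithmetic progression by one term. The first move is to telescope over the ``geometric'' functions. Put $T_j=\EE f_j$ and interpolate between the two sides of \eqref{equality in main counting theorem} by replacing $f_m,\dots,f_{m+k-1}$ one index at a time by the constants $T_m,\dots,T_{m+k-1}$. Each constant factored out is $1$-bounded, so it suffices to prove that for every fixed $i$ with $m\le i\le m+k-1$,
\begin{align*}
\EE_{x,y\in\FF_p}\Bigl(\prod_{j=0}^{m-1}f_j(x+jy)\Bigr)\,g(x+P_i(y))\prod_{j=i+1}^{m+k-1}h_j(x+P_j(y))=O(p^{-c}),
\end{align*}
whenever $g,h_j\colon\FF_p\to\CC$ are $1$-bounded and $\EE g=0$.

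The second move isolates the mean-zero function $g$. Substituting $z=x+P_i(y)$ turns the average above into $\EE_{z\in\FF_p}g(z)F(z)$, where
\begin{align*}
F(z)=\EE_{y\in\FF_p}\Bigl(\prod_{j=0}^{m-1}f_j\bigl(z-P_i(y)+jy\bigr)\Bigr)\prod_{j=i+1}^{m+k-1}h_j\bigl(z-P_i(y)+P_j(y)\bigr)
\end{align*}
is a $1$-bounded function of $z$ alone. Since $\EE g=0$, Cauchy--Schwarz gives $|\EE_z g F|^2\le\EE_z|F-\EE_z F|^2$, so the entire problem is reduced to the variance bound $\EE_z|F-\EE_z F|^2\ll p^{-c}$. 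Expanding the square and changing variables $w=z-P_i(y)$, this variance becomes the difference between an average over a triple $(w,y,y')$ of a product of at most $2(m+k-1)$ copies of the functions $f_j,h_j$ evaluated at the points $w+jy$, $w+P_j(y)$, $w+\bigl(P_i(y)-P_i(y')\bigr)+jy'$, $w+\bigl(P_i(y)-P_i(y')\bigr)+P_j(y')$, and the fully decoupled average in which $y$ and $y'$ range independently. The goal is that these agree up to $O(p^{-c})$, i.e.\ that the shift $P_i(y)-P_i(y')$ ``mixes'' the two copies of the configuration.

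The engine for this decoupling is a quantitative PET (van der Corput) induction in the spirit of Bergelson--Leibman \cite{bergelson_leibman_1996} and of the finitary refinements of Peluse \cite{peluse_2019b} and Peluse--Prendiville \cite{peluse_prendiville_2019}: one Cauchy--Schwarzes repeatedly, at each stage introducing a fresh shift parameter $h$ and substituting $y\mapsto y+h$, $y'\mapsto y+h$ (and likewise for $y'$), lowering the complexity of the polynomial system while keeping all functions $1$-bounded. The hypothesis that $a_mP_m+\dots+a_{m+k-1}P_{m+k-1}$ has degree at least $m$ unless all $a_j=0$ is precisely what certifies that throughout this process the polynomials $P_i(y)-P_i(y')$ and $P_j(y)-P_j(y')$ never coincide with the linear terms $jy$, $jy'$ and never collapse to constants, so the geometric part retains genuinely higher degree. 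Combining this with a degree-lowering argument brings matters down to the $(m-1)$-term configuration — where the inductive hypothesis applies — or, once every geometric polynomial has been stripped away, to a pure Weyl sum over $y$ that is estimated by Weil's bound (the surviving polynomial having degree between $2$ and $p-1$). The main obstacle is the bookkeeping: one must track how the exponent $c$ and the implied constants transform under each Cauchy--Schwarz step and under the induction on $m$, and one must run the PET weight-vector analysis carefully enough to guarantee that the higher-degree polynomial $P_i$ is never destroyed before the linear AP part has been dealt with. Granting \eqref{equality in main counting theorem}, Theorem~\ref{main theorem, general case} follows by applying it with $f_0=\dots=f_{m+k-1}=\mathbb{1}_A$, noting that the left-hand side is $O(1/p)$ since $A$ avoids the configuration, and combining the resulting bound on the $m$-term progression count in $A$ with a Varnavides-type supersaturation estimate deduced from $r_m(p)\le p\,s_m(p)$.
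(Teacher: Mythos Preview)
Your telescoping reduction and the Cauchy--Schwarz step isolating the mean-zero function $g$ are fine, and for $k=1$ this is essentially how the paper handles the case $\SSS(m,1)$. The gap is in the sentence ``Combining this with a degree-lowering argument brings matters down to the $(m-1)$-term configuration.'' After you expand the variance of $F$, the resulting configuration in $(w,y,y')$ involves the polynomials $jy$, $P_i(y)-P_i(y')+jy'$, $P_j(y)-P_i(y)$, and $P_i(y)-2P_i(y')+P_j(y')$ for $j>i$; these need not satisfy the hypothesis of the theorem. In fact the paper gives the explicit example $x,\;x+y,\;x+2y,\;x+y^3,\;x+y^4$: after a single Cauchy--Schwarz one obtains five nonlinear polynomials of degree at most four, hence linearly dependent, and the inductive hypothesis cannot be invoked. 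Your appeal to ``PET induction and degree-lowering'' does not explain how you avoid this obstruction; PET by itself only yields control by some $\|f_j\|_{U^s}$, which is not small in general, and ``degree-lowering'' is not a black box but the crux of the argument.

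What the paper actually does for $k>1$ is introduce an inner induction on a parameter $r$, where $\SSS(m,k,r)$ is the statement of the theorem restricted to the case $f_{m+r},\dots,f_{m+k-1}$ are additive characters. The base case $r=0$ is a Weyl-type estimate (Lemma~\ref{Vanishing of expression II}). The step $r\to r+1$ combines PET control by $\|f_{m+r}\|_{U^s}$ with a Hahn--Banach decomposition of $f_{m+r}$ (Lemma~\ref{Hahn-Banach decomposition}) to reduce to bounding $\|F\|_{U^s}$ for the dual function $F$ at position $m+r$, and then a bespoke degree-lowering lemma (Lemma~\ref{degree lowering}) shows $\|F\|_{U^s}$ is controlled by $\|F\|_{U^2}$; the proof of that lemma is where $\SSS(m,k,r)$ is fed back in, forcing the maximal Fourier phases of the derivatives of $F$ to have low-rank structure. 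Finally $\|F\|_{U^2}$ is bounded using $\SSS(m,k-1)$. None of this structure is present in your outline, and without it the variance bound you need does not follow.
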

Using the language of probability theory, we can interpret this result as ``discorrelation": up to an error $O(p^{-c})$, the polynomials $P_m, ..., P_{m+k-1}$ occur independently from $m$-term arithmetic progressions.

The condition imposed on the polynomials $P_m, ..., P_{m+k-1}$ may seem artificial, but Theorem \ref{counting theorem for the more difficult configuration} fails if this condition is not satisfied. As an example of failure, consider the configuration $x, x+y, x+2y, x+y^2$. Because $y^2$ has degree 2, which is less than the length of the arithmetic progression, $y^2$ is contained in the span of $x^2, (x+y)^2, (x+2y)^2$. Thus, there exist quadratic polynomials $Q_0, Q_1, Q_2$ satisfying
    \begin{align*}
        Q_0(x)+Q_1(x+y)+Q_2(x+2y)+(x+y^2) = 0.
    \end{align*}
    As a consequence, if we take $Q_3(t)=t$ and $f_j(t) = e_p(a Q_j(t))$ for $a\neq 0$, then
    \begin{align*}
        \EE_{x,y}f_0(x)f_1(x+y)f_2(x+2y)f_3(x+y^2) = 1
    \end{align*}
    while the right-hand side of (\ref{equality in main counting theorem}) in this case is $O(p^{-c})$, as $\EE f_3 = 0$. More generally, if a  linear combination of $P_m, P_{m+1}, ..., P_{k+m-1}$ has degree $d<m$, then there is a nontrivial algebraic relation connecting $x, x+y, ..., x+(m-1)y$ with some of $P_m, ..., P_{k+m-1}$, and this relation prevents discorrelation from happening. 

A natural question that one could ask at this point is whether Theorems \ref{main theorem, special case}, \ref{main theorem, general case} and \ref{counting theorem for the more difficult configuration} generalise to $\FF_q$ when $q$ is a prime power and not just a prime number. Indeed, Theorem \ref{counting theorem for the more difficult configuration} remains true if we replace $\FF_p$ by $\FF_q$, with the error $O(q^{-c})$ instead of $O(p^{-c})$. However, Theorems \ref{main theorem, special case} and \ref{main theorem, general case} no longer need to hold. In the process of going from Theorem \ref{counting theorem for the more difficult configuration} to Theorems \ref{main theorem, special case} and \ref{main theorem, general case}, one needs to apply known upper bounds for the largest subset of $\FF_q$ lacking $m$-term arithmetic progressions. These bounds differ in two extreme cases, one being $\FF_p$ and another being $\FF_q$ with $q=p^n$ and $p$ fixed. In the former case, the upper bounds for the largest subset lacking $m$-term arithmetic progressions vary from $O(\frac{p}{\log p^{1-o(1)}})$ to $O(\frac{p}{(\log \log p)^c})$ depending on the length $m$ of the arithmetic progression, as indicated earlier. For the latter, Ellenberg and Gijswijt proved a bound of the form $O(q^{1-c})$ for 3-term arithmetic progressions \cite{ellenberg_gijswijt_2016}. The fact that polynomial bounds like that cannot be attained in $\FF_p$ comes from the celebrated construction of Behrend \cite{behrend_1946}.  Therefore, the bounds that we gave in Theorems \ref{main theorem, special case} and \ref{main theorem, general case} are given for $\FF_p$ and not for all $\FF_q$. If we wanted to work in the fixed characteristic case, then the largest subset of $\FF_q$ lacking the progression (\ref{generalized union of AP and GP}) would have size $O(q^{1-c})$ for $m=3$, with constants depending on $p, k, P_3, ..., P_{2+k}$. 
In the fixed characteristic case, one can thus do strictly better than in $\FF_p$. For a general $q=p^n$, the best bound in the literature of the form $O(p^n/n)$ for subsets lacking 3-term arithmetic progression is due to Meshulam \cite{meshulam_1995}, and it implies a bound $O(p^n/n)$ for the size of subsets lacking (\ref{generalized union of AP and GP}) for $m=3$, with the constant depending on $k$ and the polynomials $P_3, ..., P_{2+k}$.

To complement these results, we prove an upper bound for the size of subsets of $\FF_p$ lacking progressions of the form
\begin{align}\label{APs with restricted common differences}
x, x+y^k, .., x+(m-1)y^k    
\end{align}
i.e.\ arithmetic progressions with $k-$th power common difference. An upper bound on subsets of $\ZZ$ lacking this configuration of the form $C\frac{N}{(\log \log N)^{c}}$, with constants depending on $m$ and $k$, was proved by Prendiville \cite{prendiville_2017} using density increment, and it naturally carries over to subsets of finite fields. Our bound works only for finite fields, where it is of the same shape as Prendiville's for $m>4$, albeit with a better exponent, and strictly improves on it for $m=3,4$.

\begin{theorem}[Sets lacking arithmetic progressions with $k$-th power differences]\label{Sets lacking arithmetic progressions with $k$-th power differences}
Suppose $A\subset \FF_p$ contains no arithmetic progression of length $m$ and common difference coming from the set of $k$-th power residues. Then
\[ |A|\ll\begin{cases}
p\dfrac{(\log \log p)^4}{\log p},\; &m = 3,\\
{p}{(\log p)^{-c}},\; &m = 4,\\
{p}{(\log\log p)^{-c}}, \; &m>4.
\end{cases}
\]
The constant $c$ depends only on $m$, and in fact for $m>4$, we can take $c = 2^{-2^{m+9}}$.
More generally,
\begin{align*}
    |A|&\ll p\cdot s_m(c'\cdot p^{c'})
\end{align*}
where $s_m$ is defined as in Theorem \ref{main theorem, general case}.  The constants $C, c'$ and the implied constants are positive and depend on $k$ and $m$.
\end{theorem}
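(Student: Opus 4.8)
The plan is to reduce Theorem \ref{Sets lacking arithmetic progressions with $k$-th power differences} to the counting estimate of Theorem \ref{counting theorem for the more difficult configuration}, specialised to a single polynomial $P_m(y) = y^k$ playing the role of the ``geometric'' part, together with the known quantitative Szemer\'{e}di bounds packaged in the function $s_m$. The first step is to observe that the configuration (\ref{APs with restricted common differences}) is essentially a relabelling of a special case of (\ref{generalized union of AP and GP}): if $A$ contains no $m$-term progression $x, x+y^k, \dots, x+(m-1)y^k$ with $y \neq 0$, then in particular $A$ cannot contain configurations $x, x+z, \dots, x+(m-1)z, x+y^k$ of the form (\ref{generalized union of AP and GP}) with a somewhat different indexing; more directly, I would run the density-increment-free argument of Theorem \ref{main theorem, general case} but with the arithmetic progression and the power variable sharing the same difference. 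Concretely, set $f_j = 1_A - \alpha$-type balanced functions or, more simply, apply Theorem \ref{counting theorem for the more difficult configuration} with $f_0 = \cdots = f_{m-1} = 1_A$ and a single extra function; the key point is that controlling the count of (\ref{APs with restricted common differences}) in $A$ reduces to controlling $\EE_{x,y} 1_A(x) 1_A(x+y^k)\cdots 1_A(x+(m-1)y^k)$.

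The main technical point is that Theorem \ref{counting theorem for the more difficult configuration}, as stated, governs $x, x+y, \dots, x+(m-1)y, x+P_m(y), \dots$ where the arithmetic progression has difference $y$ and the polynomial terms have the \emph{same} underlying variable $y$ but enter as higher-degree polynomials. For (\ref{APs with restricted common differences}) the progression difference is itself $y^k$. To bridge this, I would substitute: write the progression as $x, x + P_1(y), \dots, x+P_{m-1}(y)$ with $P_i(y) = i y^k$. These $P_i$ are not of the required shape (they are all proportional, hence not linearly independent), so I cannot invoke the counting theorem directly. Instead, the right move is to peel off one term: the configuration $x, x+y^k, \dots, x+(m-1)y^k$ contains, after fixing $y$, an $(m-1)$-term arithmetic progression with difference $y^k$ inside $A$, plus the point $x$; more usefully, I would use that $A$ lacking (\ref{APs with restricted common differences}) forces $A$ to lack $x, x+z, \dots, x+(m-2)z, x+(m-1)y^k$ — wait, this still couples $z$ and $y$. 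The cleanest route, and the one I expect the paper takes, is: apply Theorem \ref{counting theorem for the more difficult configuration} directly with $m \mapsto m$, $k \mapsto 1$, and $P_m(y) = $ a generic degree-$\geqslant m$ reparametrisation obtained by a change of variables $y \mapsto$ (something) that turns $y^k$-differences into linear differences — but a linear change of variables in $\FF_p$ sends $y^k$ to $(cy)^k = c^k y^k$, not to $y$, so genuinely one needs the counting theorem in the form where the progression difference is $y^k$.

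Given that subtlety, the honest plan is: prove (or cite from the body of the paper, since Theorem \ref{main theorem, general case} already handles (\ref{generalized union of AP and GP})) a variant of Theorem \ref{counting theorem for the more difficult configuration} for the configuration $x, x+y^k, x+2y^k, \dots, x+(m-1)y^k$ asserting that, for $1$-bounded $f_0, \dots, f_{m-1}$,
\begin{align*}
\EE_{x,y} \prod_{j=0}^{m-1} f_j(x+jy^k) = \EE_{x,y} \prod_{j=0}^{m-1} f_j(x+jy) + O(p^{-c}).
\end{align*}
This is exactly the kind of PET-induction / degree-lowering argument underlying the counting theorem, applied to the polynomials $jy^k$: a van der Corput / Cauchy--Schwarz step in $y$ reduces the $y^k$-progression to a comparison with the linear $y$-progression, with the leftover terms controlled by a Gowers norm which is in turn controlled by exponential-sum estimates (Weil bounds) coming from the nonlinearity of $y^k$. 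Once this substitution is in hand, set $f_j = 1_A$ and feed the resulting lower bound $\EE_{x,y}\prod 1_A(x+jy) \gg |A|^m/p^m - O(p^{-c})$ for the genuine linear progression count; then invoke the Szemer\'{e}di-type bound $r_m(p) \leqslant p\, s_m(p)$. If $|A| = \alpha p$ with $\alpha$ not too small, the linear count is bounded below, while the hypothesis that $A$ lacks (\ref{APs with restricted common differences}) forces the $y^k$-count to be at most the ``trivial'' $y = 0$ contribution $O(|A|/p^2) = O(\alpha/p)$. Comparing, $\alpha \ll s_m(c' p^{c'})$ after tracking how the error $p^{-c}$ and the Weil savings propagate; this yields $|A| \ll p\, s_m(c' p^{c'})$, and plugging in the specific bounds of Bloom, Green--Tao, Gowers for $m = 3, 4, >4$ gives the displayed estimates, with $c = 2^{-2^{m+9}}$ inherited from Gowers for $m > 4$ exactly as in Theorem \ref{main theorem, special case}.

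\textbf{The main obstacle} I anticipate is precisely the mismatch flagged above: the counting theorem is phrased for an honest arithmetic progression $x, x+y, \dots, x+(m-1)y$ adjoined with \emph{independent} higher-degree polynomials, whereas here the progression itself has $k$-th power difference, so one must either re-run the PET induction for the degenerate family $\{jy^k\}_{j=0}^{m-1}$ or find a slick reduction. The degenerate family is genuinely outside the hypotheses of Theorem \ref{counting theorem for the more difficult configuration} (the $jy^k$ are pairwise linearly dependent), so the reduction cannot be a black-box citation; the work is in showing that a single Cauchy--Schwarz in $y$ collapses the $y^k$-progression down to the linear one up to a Weil-type error, and in verifying that this error is genuinely a power saving $p^{-c}$ with $c$ depending only on $m$ and $k$. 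After that, the passage from the counting estimate to the density bound is routine and mirrors the deduction of Theorem \ref{main theorem, general case} from Theorem \ref{counting theorem for the more difficult configuration}.
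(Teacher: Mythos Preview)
Your target identity is the right one: you want
\[
\EE_{x,y}\prod_{j=0}^{m-1} f_j(x+jy^k) \;=\; \EE_{x,y}\prod_{j=0}^{m-1} f_j(x+jy) + O(p^{-c}),
\]
and once this is in hand the passage to $|A|\ll p\,s_m(c'p^{c'})$ via Varnavides and the Szemer\'{e}di bounds is exactly as you describe (and exactly as in the paper). The gap is in how you propose to prove this identity. You suggest a PET-induction or a single Cauchy--Schwarz in $y$ to ``collapse'' the $y^k$-progression to the linear one. But PET/van der Corput on the family $\{jy^k\}$ produces, after differencing, configurations like $x+j((y+h)^k-y^k)$; iterating this bounds the operator by a Gowers norm of some $f_j$, which is useless here (the $f_j=1_A$ have no reason to be Gowers-uniform), and it does not output the linear count on the right-hand side. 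Degree-lowering in the sense of Theorem~\ref{counting theorem for the more difficult configuration} is likewise unavailable, as you yourself note: the polynomials $jy^k$ are pairwise proportional.

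The paper's route is genuinely different and hinges on a structural fact you have not used: the $k$-th powers in $\FF_p$ form a multiplicative coset, so $1_{Q_k}$ decomposes as an average of multiplicative characters,
\[
1_{Q_k}(y)=\frac{1}{k}\sum_{j=0}^{k-1}\chi_k(y)^j - \frac{1}{k}1_{\{0\}}(y).
\]
One rewrites the $y^k$-count as (up to $O(p^{-1})$) a weighted linear count $\EE_{x,y}\bigl(\prod_j f_j(x+jy)\bigr)\,k\,1_{Q_k}(y)$, inserts this decomposition, and keeps the $j=0$ term as the main term. Each remaining term is
\[
\EE_{x,y}\Bigl(\prod_{i=0}^{m-1} f_i(x+iy)\Bigr)\chi(y)
\]
for a nonprincipal character $\chi$; this is a linear-forms average over the system $\{x,x+y,\dots,x+(m-1)y,\,y\}$, so by the Green--Tao Cauchy--Schwarz-complexity bound it is $\leqslant \|\chi\|_{U^s}$ for some $s=s(m)$. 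Finally, $\|\chi\|_{U^s}\ll p^{-2^{-(s+1)}}$ by Weil's bound applied to $\EE_x\chi\bigl(\prod_\ww(x+\ww\cdot\hh)\bigr)$. Your invocation of ``Weil bounds'' is in the right spirit, but the object whose Gowers norm is being controlled is the \emph{character} $\chi$, not any $f_j$, and the mechanism that isolates $\chi$ is the multiplicative decomposition, not a Cauchy--Schwarz in $y$. This is the missing idea.
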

Again, up to the values of constants involved, our bounds are optimal in the sense that they are of the same shape as the bounds in Szemer\'{e}di theorem.

We derive the bounds in Theorem \ref{Sets lacking arithmetic progressions with $k$-th power differences} using a simple argument that heavily exploits the density and equidistribution of $k$-th power residues in the finite fields. With this argument, we prove the following more general counting theorem which implies Theorem \ref{Sets lacking arithmetic progressions with $k$-th power differences}.

\begin{theorem}[Counting theorem for linear forms with restricted variables]\label{counting theorem for linear forms with restricted variables}
Let $L_1, ..., L_m$ be pairwise linearly independent linear forms in $x_1, ..., x_d$. Let $k_1,...,k_d$ be positive integers. Moreover, if $k_j>1$, assume that no linear form $L_i$ is of the form $L_i(x_1,...,x_d)=ax_j$. If $f_1,...,f_m$ satisfy $|f_i(x)|\leqslant 1$ for each $1\leqslant i\leqslant m$ and each $x\in\FF_p$, then 
\begin{align}
\label{main equation}
    \EE_{x_1,...,x_d\in\FF_p}\prod\limits_{j=1}^m f_j(L_j(x_1^{k_1},...,x_d^{k_d})) =\EE_{x_1,...,x_d\in\FF_p}\prod\limits_{j=1}^m  f_j(L_j(x_1,...,x_d))+O(p^{-c}).
\end{align}
In particular,
\begin{align*}
    &|\{(x_1,...,x_d)\in \FF_p^d: L_i(x_1^{k_1},...,x_d^{k_d})\in A\;{\rm{for}}\;1\leqslant i\leqslant m\}|\\
    &=|\{(x_1,...,x_d)\in \FF_p^d: L_i(x_1,...,x_d)\in A\;{\rm{for}}\;1\leqslant i\leqslant m\}|+O(p^{-c}).
\end{align*}
\end{theorem}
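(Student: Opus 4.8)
The plan is to prove the counting identity \eqref{main equation} by replacing the monomials $x_j^{k_j}$ by uniform variables one coordinate at a time, using Fourier analysis on $\FF_p$. First I would expand each $f_j$ into additive characters, $f_j(t) = \sum_{\xi}\widehat{f_j}(\xi)e_p(\xi t)$, so that the left-hand side of \eqref{main equation} becomes a sum over $(\xi_1,\dots,\xi_m)$ of $\left(\prod_j \widehat{f_j}(\xi_j)\right)$ times
\[
\EE_{x_1,\dots,x_d}\, e_p\!\left(\sum_{j=1}^m \xi_j L_j(x_1^{k_1},\dots,x_d^{k_d})\right).
\]
Collecting terms coordinatewise, the inner expectation factors as a product over $i=1,\dots,d$ of $\EE_{x_i} e_p(\eta_i x_i^{k_i})$, where $\eta_i = \sum_j \xi_j c_{ji}$ and $c_{ji}$ is the coefficient of $x_i$ in $L_j$. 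For a coordinate with $k_i=1$ this is exactly $\mathbb{1}[\eta_i = 0]$; for a coordinate with $k_i>1$ it is a (complete) Gauss-type sum $\frac{1}{p}\sum_{x} e_p(\eta_i x^{k_i})$, which by Weil's bound has absolute value $O(p^{-1/2})$ whenever $\eta_i \neq 0$, and equals $1$ when $\eta_i = 0$.

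The main term, accordingly, comes from the tuples $(\xi_1,\dots,\xi_m)$ for which $\eta_i = 0$ for every $i$, i.e.\ from the $\FF_p$-points of the kernel of the linear map $\xi \mapsto (\eta_1,\dots,\eta_d)$; and by the same Fourier expansion applied to the right-hand side of \eqref{main equation}, the sum over exactly this set of tuples reproduces $\EE_{x_1,\dots,x_d}\prod_j f_j(L_j(x_1,\dots,x_d))$. So the proof reduces to bounding the contribution of the \emph{off-kernel} tuples: those $\xi$ with at least one $\eta_i \neq 0$, necessarily at a coordinate $i$ with $k_i>1$ (by definition of the kernel, and since at a $k_i=1$ coordinate $\eta_i\neq 0$ kills the term entirely). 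For each such $\xi$ the inner expectation is $O(p^{-1/2})$, and $\sum_\xi \prod_j|\widehat{f_j}(\xi_j)| \le \prod_j \left(\sum_{\xi_j}|\widehat{f_j}(\xi_j)|\right) \le \prod_j \|f_j\|_1^{1/2}\cdot p^{1/2}\cdots$; more carefully, $\sum_{\xi}\prod_j |\widehat{f_j}(\xi_j)| = \prod_{j=1}^{m}\sum_{\xi_j}|\widehat{f_j}(\xi_j)|$ and each factor is $O(p^{1/2})$ by Cauchy--Schwarz and Parseval (since $\|f_j\|_\infty\le 1$ gives $\|\widehat{f_j}\|_2 \le p^{-1/2}$ and there are $p$ frequencies). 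This yields a bound of $O(p^{m/2})\cdot O(p^{-1/2})$, which is far too lossy. The fix is to \emph{not} expand all coordinates: one should split only over the ``free'' part of the frequency vector. Concretely, I would choose a subset of the variables $\xi_j$ to sum over trivially using the constraints $\eta_i=0$ at the $k_i = 1$ coordinates (which are honest Kronecker deltas and cost nothing), and only keep a bounded number of free frequency parameters; since $L_1,\dots,L_m$ involve $d$ variables, the kernel and its complement are cut out by at most $d$ linear conditions, and the genuinely oscillatory sum over the remaining (at most $d$, hence $O(1)$) frequency parameters contributes $O(p^{d/2})$ character-sum mass against a saving of $p^{-1/2}$ from a single bad Gauss sum — still potentially lossy if $d$ is large. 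The cleanest route, which I would adopt, is to peel off one monomial coordinate at a time: fix all variables except $x_i$ (for some $i$ with $k_i>1$), apply Cauchy--Schwarz in the remaining variables to replace $\prod_j f_j$ by a bounded average, and then use the Weil bound $|\EE_{x_i} e_p(\eta_i x_i^{k_i})| = O(p^{-1/2})$ on the diagonal-free part to gain $p^{-c}$, iterating over the $k_i>1$ coordinates and absorbing the $k_i=1$ coordinates into linear changes of variables where the hypothesis ``no $L_i$ is $a x_j$'' guarantees the relevant frequency $\eta_i$ is a nonzero linear form in the surviving frequencies, so it is nonzero for a positive proportion of them.

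The hard part will be the bookkeeping in this peeling argument: one must verify at each stage that after substituting $x_i \mapsto x_i^{k_i}$ and Cauchy--Schwarzing, the resulting phase in $x_i$ is a nonconstant polynomial (so Weil applies) for all but an $O(p^{-1})$-fraction of the frozen data, and this is exactly where the two standing hypotheses enter — pairwise linear independence of the $L_j$ rules out cancellation that would make the phase identically linear or constant, and the condition that no $L_i$ equals $ax_j$ for a restricted coordinate $j$ ensures that coordinate genuinely appears in at least two forms so that the ``diagonal'' contribution (where the Cauchy--Schwarz doubling variable collapses) is itself governed by a lower-dimensional instance of the same identity, handled by induction on $d$. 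I would set up the induction on $d$ with the base case $d=1$ (a single Gauss sum, immediate from Weil), state a clean lemma of the form ``$|\EE_{x\in\FF_p} e_p(Q(x))| \ll_{\deg Q} p^{-1/2}$ for nonconstant $Q\in\FF_p[x]$,'' and then the estimate of \eqref{main equation} follows by combining the Fourier expansion, the Weil lemma on the oscillatory coordinates, and the exact matching of the main term; the final ``In particular'' counting statement is then immediate by taking each $f_j = \mathbb{1}_A$ and noting $\|\mathbb{1}_A\|_\infty \le 1$.
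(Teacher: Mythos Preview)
Your additive-Fourier approach is a natural first instinct, and you correctly diagnose that the direct bound (Gauss-sum saving times the $\ell^1$-mass of the Fourier coefficients) is hopelessly lossy. However, the ``Cauchy--Schwarz peeling'' fix you sketch does not close the gap. After freezing the other variables and applying Cauchy--Schwarz, you do not arrive at a pure exponential sum $\EE_{x_i} e_p(\eta_i x_i^{k_i})$: the $f_j$ are arbitrary $1$-bounded functions, not characters, so Cauchy--Schwarz produces expressions like $f_j(c_{ji}x_i^{k_i}+b_j)\overline{f_j(c_{ji}(x_i')^{k_i}+b_j)}$ rather than a phase polynomial in $x_i$, and no amount of iteration turns this into something the additive Weil bound handles. (Iterated Cauchy--Schwarz would eventually isolate a Gowers norm of one of the $f_j$, which has no reason to be small.) Your reading of the second hypothesis is also off: ``no $L_j$ equals $ax_i$'' does \emph{not} force $x_i$ to appear in two or more of the $L_j$, so the inductive structure you describe for the diagonal contribution is not available in general.

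The paper's route is different in kind. Instead of Fourier-expanding the $f_j$, it expands the \emph{indicator of $k$-th power residues} via multiplicative characters, so that replacing each $x_i^{k_i}$ by $x_i$ produces a main term plus a bounded number of error terms of the shape
\[
\EE_{x_1,\dots,x_d}\ \prod_{j=1}^m f_j\bigl(L_j(x_1,\dots,x_d)\bigr)\ \prod_{i\in S}\chi_{k_i}(x_i),
\]
with $S\neq\varnothing$ and each $\chi_{k_i}$ a nontrivial multiplicative character. The two hypotheses say exactly that $\{L_1,\dots,L_m\}\cup\{x_i:i\in S\}$ is a system of pairwise linearly independent linear forms, so the Green--Tao generalized von Neumann inequality bounds each such error by $\min_{i\in S}\|\chi_{k_i}\|_{U^s}$, where $s-1$ is the Cauchy--Schwarz complexity of that system. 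The final input is that Gowers norms of nonprincipal multiplicative characters are small, $\|\chi\|_{U^s}\ll p^{-2^{-(s+1)}}$, which one proves by writing out the $U^s$ norm and applying Weil's bound to $\chi$ evaluated at a product of distinct linear factors. So Weil enters, but for \emph{multiplicative} character sums; the additive Gauss-sum bound $|\EE_x e_p(ax^k)|\ll p^{-1/2}$ that you repeatedly invoke plays no role.
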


Similarly to the discussion following Theorem \ref{counting theorem for the more difficult configuration}, Theorem \ref{counting theorem for linear forms with restricted variables} remains true for $\FF_q$, but going from Theorem \ref{counting theorem for linear forms with restricted variables} to Theorem \ref{Sets lacking arithmetic progressions with $k$-th power differences} forces us to work in $\FF_p$ instead of $\FF_q$. An analogue of Theorem \ref{Sets lacking arithmetic progressions with $k$-th power differences} for $\FF_{q}$, $q=p^n$ with $p$ fixed and $n$ being the asymptotic parameter is that sets lacking $x, x+y^k, x+2y^k$ have size $O(q^{1-c})$, with the implied constant dependent on $k$. For general $q=p^n$, we would obtain that sets lacking $x, x+y^k, x+2y^k$ have size $O(p^n/n)$. 

\subsection{Known results}\label{Known results}
In this section we enumerate known bounds for subsets of $\FF_q$ or $[N]$ lacking polynomial progressions
\begin{align*}
    x,\; x+P_1(y),\; ...,\; x+P_m(y)
\end{align*}
where not all of $P_1, ..., P_m$ are linear. There are some differences between the integral and finite field settings. Most importantly, finite fields contain significantly more polynomial progressions of a given form if at least one polynomial is nonlinear. That is because a nonlinear polynomial $P$ of degree $d>1$ has only $\Theta(N^{\frac{1}{d}})$ images in $[N]$, but it is a dense subset of $\FF_q$, in the sense that there are at least $\frac{q}{d}$ images of $P$ in $\FF_q$. 



The case $m=1$ in natural numbers is often referred to as the Furstenberg-S\'{a}rk\"{o}zy theorem, and it is equivalent to finding the largest subset $A$ of natural numbers whose difference set does not intersect the values of $P$ evaluated at integers. This problem has been studied, among others, by S\'{a}rk\"{o}zy \cite{sarkozy_1978a,sarkozy_1978b}, Balog, Pelik\'{a}n, Pintz, and Szemer\'{e}di \cite{balog_pelikan_pintz_szemeredi_1994}, Slijep\u{c}evi\'{c} \cite{slijepcevic_2003}, Lucier \cite{lucier_2006}, and Rice \cite{rice_2019}. They showed that $A$ is sparse if and only if for each natural number $n$ there exists $m\in\NN$ for which $n$ divides $P(m)$, getting explicit bounds on the way; such polynomials have been called \emph{intersective}.
When $P(y)=y^k$ for $k>1$, a lower bound of the form $\Omega(N^c)$ for $0<c<1$ depending on $k$ can be obtained by trivial greedy algorithm, and the value of $c$ has been improved nontrivially by Ruzsa \cite{ruzsa_1984}. For finite fields $\FF_q$, an elementary Fourier analytic argument gives  upper bounds of the form $O(p^\frac{1}{2})$ with the implied constant depending on $k$, while the best known lower bounds are of the form $\Omega(\log p\log\log\log p)$ for infinitely many primes $p$ \cite{graham_ringrose_1990}.

In the case $m>1$, bounds have only been known in two extremes. If $P_1$, ..., $P_m$ are all homogeneous of the same degree, i.e. we have a configuration of the form
\begin{align*}
    x,\; x+c_1 y^k,\; ...,\; x+c_m y^k
\end{align*}
then Prendiville \cite{prendiville_2017} proved that all subsets of $[N]$ lacking this configuration have size $O\left(\frac{N}{(\log\log N)^{c}}\right)$ for some $c>0$ depending on $m$ and $k$. Theorem \ref{Sets lacking arithmetic progressions with $k$-th power differences} improves this result over finite fields for configurations of length 3 and 4.

The other extreme is when $P_1$, ..., $P_m$ are all linearly independent. This case has recently been tackled over finite fields by Peluse \cite{peluse_2019b} who has showed that subsets of $\FF_q$ lacking such progressions have size $O(q^{1-c})$ for $c>0$ depending on $P_1, ..., P_m$. In the case $m=2$, a specific exponent is known due to works of Bourgain and Chang \cite{bourgain_chang_2017}, Peluse \cite{peluse_2018}, and Dong, Li and Sawin \cite{dong_li_sawin_2017}. Recently, these results have been extended to the integers: Peluse and Prendiville showed that subsets of $[N]$ lacking $x, x+y, x+y^2$ have size $O(N(\log\log N)^{-c})$ \cite{peluse_prendiville_2019}, and Peluse then proved a bound of this form for subsets of $[N]$ lacking $x, x+P_1(y), ..., x+P_m(y)$ with $P_1, ..., P_m$ all having distinct degrees and zero constant terms \cite{peluse_2019a}.

\subsection{Notation, terminology, and assumptions}\label{Notation}
Throughout the paper, $p$ always denotes the characteristic and cardinality of the finite field $\FF_p$ in which we are currently working.

A function $f$ is \emph{1-bounded} if $||f||_\infty\leqslant 1$. We always assume that $f$ is a $1$-bounded function from $\FF_p$ to $\CC$ unless explicitly stated otherwise. Sometimes, we use an expression $\textbf{b}(t_1, ..., t_n)$ to denote a 1-bounded function depending only on the variables $t_1, ..., t_n$ whose exact form is irrelevant and may differ from line to line.

We denote constants by $0<c<1<C$. The exact values of these constants are generally unimportant, only their relative size, therefore we shall often use the same symbol $c, C$ to denote constants whose value changes from line to line or even in the same expression. If there are good reasons to distinguish between two constants in the same expression, we shall denote them as $c, c'$ or $C, C'$ respectively. If we need to fix a constant for the duration of an argument, we give it a numerical subscript, e.g. $c_0$. We also use asymptotic notation $f=O(g), g=\Omega(f), f\ll g$, or $g\gg f$ to denote that $|f(p)|\leqslant C |g(p)|$ for sufficiently large $p$. The constant may depend on parameters such as the length of the polynomial progression or the degrees and leading coefficients of polynomials $P_1$, ..., $P_m$ involved. However, if the asymptotic notation is used in an expression involving arbitrary functions $f_0, ..., f_m$, the constant never depends on the choice of $f_0, ..., f_m$. While it is quite common in additive combinatorics to denote the dependence of the constant on these parameters by e.g. writing $C_m$ when it depends on $m$, we refrain from doing so in order not to clutter the notation. Therefore the reader should always assume that constants depend on the shape and length of the polynomial progression, but never on the functions $f_0, ..., f_m$ weighting the progression. We shall reiterate this in the statements of our lemmas and theorems. 

We often use expected values, which we denote by $\EE_{x\in X}f(x)=\frac{1}{|X|}\sum_{x\in X}f(x)$. If the set $X$ is omitted from the notation, it is assumed that $x$ is taken from $\FF_p$ or from another specified set. 

We denote the indicator function of the set $A$ by $1_A$. The map $\mathcal{C}:x\mapsto\overline{x}$ denotes the conjugation operator. Finally, we set $e_p(x):=e(x/p)=e^{2\pi i x/p}$.

\subsection{Acknowledgements}
The author is indebted to Sean Prendiville for his unrelenting support, useful suggestions, inspiring discussions, and help with editing the paper, and to the anonymous referee for their suggestions on how to simplify the arguments in the paper.

\section{Basic concepts from additive combinatorics}\label{Gowers norms}
The purpose of this section is to describe a few basic and standard concepts that are used extensively throughout this paper. We only introduce here ideas that are essential for all the arguments. There are tools which shall only be applied in specific proofs, and these will be discussed in relevant sections.
\subsection{Fourier transform}
 Given a function $f:\FF_p\to\CC$ and $\alpha\in\FF_p$, we define its Fourier transform by the formula
\begin{align*}
    \hat{f}(\alpha):=\EE_x f(x) e_p(\alpha x).
\end{align*}
We also call $\hat{f}(\alpha)$ the \emph{Fourier coefficient of $f$ at $\alpha$}.
We define the inner product on $\FF_p$ as well as $L^s$ and $\ell^s$ norms for functions from $\FF_p$ to $\CC$ to be 
\begin{align*}
    \langle f, q\rangle :=\EE_x f(x)\overline{g(x)},\quad
    ||f||_{L^s} =\left(\EE_x |f(x)|^s\right)^\frac{1}{s},\quad {\rm{and}}\quad
     ||f||_{\ell^s} =\left(\sum_x |f(x)|^s\right)^\frac{1}{s}.
\end{align*}
for $1\leqslant s<\infty$, and we set $||f||_\infty:=||f||_{L^\infty}=||f||_{\ell^\infty}=\max\{|f(x)|: x\in\FF_p\}$.


\subsection{Gowers norms}\label{section on Gowers norms}
Let $\Delta_h f(x)=f(x+h)\overline{f(x)}$ denote the \emph{multiplicative derivative} of $f$. The $U^s$ norm of $f$ is defined as
\begin{align}\label{Gowers norm}
    ||f||_{U^s}:=\left(\EE_{x,h_1,...,h_s}\prod_{\ww\in\{0,1\}^s}\mathcal{C}^{|w|}f(x+\ww\cdot \hh) \right)^{\frac{1}{2^s}}
\end{align}
where $|w|=w_1+...+w_s$. If $f=1_A$, then $||1_A||_{U^s}^{2^s}$ is the normalized count of $s$-dimensional parallelepipeds in $A$, i.e. configurations of the form $$(x+w_1 h_1 + ... + w_s h_s)_{\ww\in \{0,1\}^s}.$$
It turns out that $||f||_{U^s}$ is a well-defined norm for $s>1$ and a seminorm for $s=1$ (for the proofs of these and other facts on Gowers norms described in this section, including Lemma \ref{von Neumann}, consult \cite{green_2007} or \cite{tao_2012}). In fact, $||f||_{U^1}=|\EE_x f(x)|=|\hat{f}(0)|$. Gowers norms enjoy several important properties that are used extensively in this paper. First, they are monotone:
\begin{align*}
    ||f||_{U^1}\leqslant ||f||_{U^2}\leqslant ||f||_{U^3}\leqslant ...
\end{align*}
Second, one can express a $U^s$ norm of $f$ in terms of a lower-degree Gowers norm of its multiplicative derivatives:
\begin{align*}
    ||f||_{U^s}^{2^s}=\EE_{h_1, ..., h_{s-k}}||\Delta_{h_1,..., h_{s-k}}f||_{U^k}^{2^k}.
\end{align*}
In particular, taking $k=2$ gives:
\begin{align*}
    ||f||_{U^s}^{2^s}=\EE_{h_1, ..., h_{s-2}}||\Delta_{h_1,..., h_{s-2}}f||_{U^2}^4.
\end{align*}
The utility of this formula for us is that $U^2$ norm is much easier to understand than the $U^s$ norms for $s>2$. In particular,
$||f||_{U^2}=||\hat{f}||_{\ell^4}$, and from the fact that $\max_{\phi\in\FF_p}|\hat{f}(\phi)|\leqslant||\hat{f}||_{\ell^4} \leqslant\max_{\phi\in\FF_p}|\hat{f}(\phi)|^{\frac{1}{2}}$ it follows that having a large $U^2$ norm is equivalent to having a large Fourier coefficient, which is the statement of \textit{$U^2$ inverse theorem}. For $s>2$, corresponding inverse theorems exist as well, but they are significantly more involved and we fortunately do not need them.

Gowers norms, introduced by Gowers in his celebrated proof of Szemer\'{e}di theorem, occur frequently in additive combinatorics because $||1_A||_{U^s}$ controls the number of $(s+1)$-term arithmetic progressions in $A$ in the following way.
\begin{lemma}[Generalized von Neumann theorem]\label{von Neumann}
Let $f_0, ..., f_s$ be $1$-bounded. Then
\begin{align*}
    |\EE_{x,y}f_0(x)f_1(x+y)...f_s(x+sy)|\leqslant\min_{0\leqslant i\leqslant s}||f_j||_{U^s}.
\end{align*}
\end{lemma}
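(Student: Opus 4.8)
The plan is to prove Lemma \ref{von Neumann} by $m$ successive applications of the Cauchy--Schwarz inequality, one for each linear form, which is the standard ``generalized von Neumann'' argument. By symmetry it suffices to bound the average in terms of $\|f_s\|_{U^s}$; the same argument applied after relabelling the forms handles $\|f_j\|_{U^s}$ for any fixed $j$, and then one takes the minimum.

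First I would perform a change of variables to put the average in a more symmetric shape: write the progression as $x+jy$ for $0\le j\le s$, and substitute so that the variable we are \emph{not} trying to eliminate, say the one attached to $f_s$, becomes the ``base point''. Concretely, after a linear substitution one is left with an average of the form $\EE_{x,y}\prod_{j=0}^{s}f_j(x+jy)$, and the goal is to successively remove $f_0, f_1, \dots, f_{s-1}$ by Cauchy--Schwarz, each step doubling the number of averaging variables of ``difference'' type and introducing one multiplicative derivative. Step $i$ (for $i=1,\dots,s$) proceeds as follows: fix all variables except the one that isolates $f_{i-1}$ from the rest, apply Cauchy--Schwarz in that variable to bound the square of the average by an average in which $f_{i-1}$ has been replaced by a constant-modulus-$1$ factor (absorbed into the $O(1)$ bound) and the remaining functions $f_i,\dots,f_s$ have each been replaced by a multiplicative derivative $\Delta_{h_i} f_j$ in a fresh variable $h_i$. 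Iterating, after $s$ steps every $f_0,\dots,f_{s-1}$ has been eliminated and one is left with $\EE_{x,h_1,\dots,h_s}\Delta_{h_1,\dots,h_s}f_s(x)$ times harmless $1$-bounded factors, which by definition is exactly $\|f_s\|_{U^s}^{2^s}$. Taking $2^s$-th roots and tracking the exponents through the Cauchy--Schwarz steps (each step squares, so after $s$ steps the original average is raised to the $2^s$ power) yields $|\EE_{x,y}\prod_j f_j(x+jy)|\le\|f_s\|_{U^s}$.

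The bookkeeping I must be careful about is exactly which variable to isolate at each stage so that the surviving functions all acquire a \emph{common} shift in the new variable $h_i$, and that the phases/indices line up so that the resulting iterated derivative is genuinely $\Delta_{h_1,\dots,h_s}f_s$ evaluated on a single progression point rather than something more complicated; this is a routine but slightly fiddly linear-algebra check, using that the $s+1$ linear forms $x, x+y,\dots,x+sy$ are in ``general position'' in the appropriate sense (pairwise independent, and any $s$ of them determine the configuration up to the removed form). I expect this indexing/alignment step to be the only real obstacle — there is no analytic difficulty, only the combinatorial care needed to see that each Cauchy--Schwarz application does reduce the complexity by exactly one and terminates in the clean $U^s$-norm expression. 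For the details I would simply refer the reader to the standard treatments, as the statement already does, e.g. \cite{green_2007} or \cite{tao_2012}.
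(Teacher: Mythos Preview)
Your proposal is correct and is precisely the standard repeated Cauchy--Schwarz argument for the generalized von Neumann theorem. The paper does not give its own proof of this lemma at all: it states the result and refers the reader to \cite{green_2007} or \cite{tao_2012}, whose proofs are exactly the iterative Cauchy--Schwarz scheme you outline, so there is nothing to compare.
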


\subsection{Counting arithmetic progressions in subsets of finite fields}\label{Counting arithmetic progressions in subsets of finite fields}
In Theorems \ref{counting theorem for the more difficult configuration} and \ref{counting theorem for linear forms with restricted variables}, we show that a certain counting operator can be expressed in terms of
\begin{align*}
    \Lambda_m(f_0, ..., f_{m-1}):=\EE_{x,y}f_0(x)f_1(x+y)...f_{m-1}(x+(m-1)y)
\end{align*}
which counts $m$-term arithmetic progressions weighted by $f_0$, ..., $f_{m-1}$. In particular, $\Lambda_m(1_A)=\Lambda_m(1_A, ..., 1_A)$ is a normalized count of $m$-term arithmetic progressions in $A$. Instead of giving the exact estimates for what this counting operator is, we want to bound it from below by an expression involving $N_m(\alpha)$, which is the smallest natural number such that $p>N_m(\alpha)$ implies that each subset of $\FF_p$ of size at least $\alpha p$ contains an $m-$term arithmetic progression. The reason why we want to have the estimate for $\Lambda_m$ in terms of $N_m$ is because the functions $N_m$ and $r'_m(n):=r_m(n)/n$ are essentially inverses, where $r_m(p)$ is the size of the largest subset of $\FF_p$ not containing an $m$-term arithmetic progression. What we mean by this is that if $r'_m$ is bounded from above by a decreasing function $s_m$, then - subject to certain conditions - $N_m$ is bounded from above by $s^{-1}_m$. The following lemma makes this precise.
\begin{lemma}\label{inverse functions}
Let $r_m(p)$ be the size of the largest subset of $\FF_p$ lacking m-term arithmetic progressions. Let $N_m(\alpha)$ be the smallest natural number such that $p > N_m(\alpha)$ implies that each subset of $\FF_p$ of size at least $\alpha p$ has an $m$-term arithmetic progression. Suppose that $s_m: [p_0,\infty) \to (0,1]$ is a decreasing function with $\lim\limits_{n\to\infty} s_m(n)=0$. Let $M_m$ be its inverse defined on $(0,\alpha_0]$, where $\alpha_0:=s_m(p_0)$. Then $r_m(p) \leqslant p s_m(p)$ for $p\geqslant p_0$ if and only if $N_m(\alpha) \leqslant M_m(\alpha)$ for $0<\alpha\leqslant\alpha_0$.
\end{lemma}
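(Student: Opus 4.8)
The plan is to unwind the definitions of $N_m$ and $M_m$ and show the two inequalities are logically equivalent, handling the subtlety that $s_m$ need not be injective (it is only decreasing, not strictly decreasing) by treating $M_m$ as a generalized inverse. First I would record the monotonicity relationship: since $s_m$ is decreasing and $r_m$ is (weakly) increasing, the function $r'_m(n) = r_m(n)/n$ and $N_m(\alpha)$ are each ``inverse-like'' to the other. Concretely, for a threshold $\alpha \in (0,\alpha_0]$, the statement ``every subset of $\FF_p$ of size $\geqslant \alpha p$ contains an $m$-AP'' is equivalent to ``$r_m(p) < \alpha p$'', i.e.\ $r'_m(p) < \alpha$. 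Hence by definition $N_m(\alpha)$ is the least $n_0$ such that $r'_m(p) < \alpha$ for all $p > n_0$; since $r'_m$ need not be monotone this is a genuine ``eventually'' statement, but that is exactly what the definition of $N_m$ already encodes.

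Next I would prove the forward direction. Assume $r_m(p) \leqslant p\, s_m(p)$ for all $p \geqslant p_0$, i.e.\ $r'_m(p) \leqslant s_m(p)$ on $[p_0,\infty)$. Fix $\alpha \in (0,\alpha_0]$ and set $M_m(\alpha)$ to be the inverse value, which since $s_m$ is decreasing with limit $0$ can be taken as $M_m(\alpha) = \sup\{n \geqslant p_0 : s_m(n) \geqslant \alpha\}$ (finite because $s_m(n) \to 0$). For any prime $p > M_m(\alpha)$ we then have $s_m(p) < \alpha$ (using that $s_m$ is decreasing, so once it drops below $\alpha$ it stays below), hence $r'_m(p) \leqslant s_m(p) < \alpha$, so every subset of $\FF_p$ of density $\geqslant \alpha$ has an $m$-AP. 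By minimality of $N_m(\alpha)$ this gives $N_m(\alpha) \leqslant M_m(\alpha)$.

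For the converse, assume $N_m(\alpha) \leqslant M_m(\alpha)$ for all $0 < \alpha \leqslant \alpha_0$, and suppose for contradiction that $r_m(p) > p\, s_m(p)$ for some prime $p \geqslant p_0$. Put $\alpha := s_m(p) \in (0,\alpha_0]$. Then $r'_m(p) > \alpha$, so $\FF_p$ has a subset of density $\geqslant \alpha$ with no $m$-AP, which forces $p \leqslant N_m(\alpha)$. On the other hand $M_m(\alpha) = \sup\{n : s_m(n) \geqslant \alpha\} < p$ would follow if $s_m$ were strictly decreasing; in general, since $s_m(p) = \alpha$, we need the convention that $M_m(\alpha)$ is the \emph{largest} such $n$, giving $M_m(\alpha) \geqslant p$ only if $s_m$ is constant $=\alpha$ on a neighbourhood of $p$ — and precisely here one must pin down the definition of the inverse $M_m$ used in the statement so that the threshold lands on the correct side. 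Assuming the intended convention $M_m(\alpha) = \sup\{n \geqslant p_0 : s_m(n) \geqslant \alpha\}$, we get $N_m(\alpha) \leqslant M_m(\alpha)$, hence $p \leqslant N_m(\alpha) \leqslant M_m(\alpha)$, which together with $s_m$ decreasing and $s_m(p) = \alpha$ is consistent; to extract the contradiction one instead chooses $\alpha := s_m(p) + \varepsilon$ slightly larger (legitimate if $\alpha \leqslant \alpha_0$, with a boundary adjustment otherwise) so that $s_m(n) < \alpha$ for all $n \geqslant p$, forcing $M_m(\alpha) < p$ while still $N_m(\alpha) \geqslant p$ since the density-$s_m(p)$-subset has density $> s_m(p)$... hence $\geqslant$ a value below $\alpha$, so this needs care. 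The cleanest route is to phrase everything as: $r'_m(p) \leqslant s_m(p)$ for all $p \geqslant p_0$ $\iff$ for all $\alpha$, $\{p : r'_m(p) > \alpha\} \subseteq \{p : s_m(p) > \alpha\} \subseteq [p_0, M_m(\alpha)]$, and match this set-containment against the definition of $N_m$.

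The main obstacle, and the only real content, is the bookkeeping around the non-strict monotonicity of $s_m$: one must fix once and for all what ``the inverse $M_m$ on $(0,\alpha_0]$'' means (I would take $M_m(\alpha) = \sup\{n \in [p_0,\infty) : s_m(n) \geqslant \alpha\}$, finite since $s_m \to 0$), verify it is well-defined and decreasing, and then check the two threshold comparisons land on the correct (weak vs.\ strict) side of the inequalities. Everything else — translating ``density $\geqslant \alpha$ forces an $m$-AP'' into ``$r'_m < \alpha$ eventually'' and back — is immediate from the definitions of $r_m$ and $N_m$. I expect the write-up to be short: define $M_m$, prove each direction by contraposition using the single inequality $r'_m(p) \leqslant s_m(p) \iff (s_m(p) < \alpha \Rightarrow r'_m(p) < \alpha)$ for the relevant $\alpha$'s, and conclude.
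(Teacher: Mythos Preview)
Your forward direction is correct and matches the paper's argument. The trouble is entirely in the converse, and it is a genuine gap rather than a stylistic choice.

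You set $\alpha := s_m(p)$ and derive $p \leqslant N_m(\alpha) \leqslant M_m(\alpha)$. But with this choice of $\alpha$ there is no contradiction to extract: since $s_m(p) = \alpha$, one has $M_m(\alpha) \geqslant p$ automatically (and equality if $s_m$ is strictly decreasing), so the chain $p \leqslant M_m(\alpha)$ is perfectly consistent with your hypothesis. You noticed this and tried to perturb to $\alpha = s_m(p) + \varepsilon$, but that breaks the step ``$\FF_p$ has an AP-free set of density $\geqslant \alpha$'': the set you have has density $r'_m(p) > s_m(p)$, and there is no reason it should exceed $s_m(p) + \varepsilon$ for your chosen $\varepsilon$. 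The set-containment reformulation you sketch at the end does not rescue this either, since it again compares against the wrong threshold.

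The fix, and what the paper does, is to apply the hypothesis at $\alpha$ equal to the density of the AP-free set itself rather than at $s_m(p)$. Concretely: take $A\subset\FF_p$ with $|A|=\alpha p$ lacking an $m$-AP, assume $p\geqslant p_0$ and $\alpha\leqslant\alpha_0$, and invoke $N_m(\alpha)\leqslant M_m(\alpha)$ for \emph{this} $\alpha$. From the existence of $A$ one gets $p\leqslant N_m(\alpha)\leqslant M_m(\alpha)$, and applying the decreasing $s_m$ yields $\alpha\leqslant s_m(p)$. Taking $A$ to be the extremal set gives $r_m(p)\leqslant p\,s_m(p)$; the boundary case $\alpha>\alpha_0$ is dispatched separately using $M_m(\alpha_0)=p_0$.

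Finally, you are overworking the non-injectivity issue. The statement says ``let $M_m$ be its inverse defined on $(0,\alpha_0]$'', which presumes $s_m$ has a genuine inverse; the paper treats $s_m$ as strictly decreasing throughout and uses $s_m(M_m(\alpha))=\alpha$, $M_m(s_m(p))=p$ freely. Your generalized-inverse machinery is unnecessary here and is what led you into the weeds.
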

Combining Lemma \ref{inverse functions} with an averaging argument of Varnavides, we obtain the following lemma, the precise version of which has been borrowed from \cite{rimanic_wolf_2019}.
\begin{lemma}[Averaging over progressions]\label{Averaging over progressions}
Suppose $0<\alpha_0\leqslant 1$, and let $M_m:(0,\alpha_0]\to\RR_+$ be a decreasing function satisfying $N_m\leqslant M_m$. Suppose that $A\subset\FF_{p}$ has size $|A|=\alpha p$ for some $0<\alpha\leqslant\alpha_0$. Then $|\Lambda_m(1_A)|\gg 1/M_m(\alpha/2)^2$, where the implied constant depends on $m$.
\end{lemma}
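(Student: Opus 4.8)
The plan is to combine the Varnavides averaging trick with the hypothesis $N_m \leqslant M_m$ to show that not only does $A$ contain one $m$-term progression, but in fact a positive proportion of all pairs $(x,y)$ give a progression landing in $A$. First I would pass to a density-$\alpha/2$ setting on a large arithmetic progression inside $\FF_p$: choose a length parameter $L \approx M_m(\alpha/2)$ (rounded to an integer), and for each genuine arithmetic progression $Q = \{a, a+d, \dots, a+(L-1)d\}$ of length $L$ in $\FF_p$ with $d \neq 0$, note that $Q$ is a Freiman-isomorphic copy of $[L] \subset \FF_L$ (here one needs $L < p$, which holds for $p$ large since $M_m(\alpha/2)$ is a fixed quantity once $\alpha$ is fixed — or more carefully one argues with $L = \min(M_m(\alpha/2), p^{1/2})$ or similar to keep the isomorphism valid). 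The key point is that $N_m(\alpha/2) \leqslant M_m(\alpha/2) \leqslant L$, so whenever $A \cap Q$ has relative density at least $\alpha/2$ inside $Q$, the set $A\cap Q$ contains an $m$-term arithmetic progression.

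Next I would run the standard counting argument. By an averaging argument over the $\approx p^2$ progressions $Q$ of length $L$ (each element of $\FF_p$ lies in the same number of such $Q$), the expected relative density of $A$ in a random such $Q$ is exactly $\alpha$; hence by Markov at least a $\gg \alpha$-proportion of the progressions $Q$ satisfy $|A \cap Q| \geqslant (\alpha/2)|Q|$. For each such ``good'' $Q$, the set $A$ contains at least one $m$-term progression with all terms in $Q$; since $Q$ has length $L$, such a progression is of the form $x, x+y, \dots, x+(m-1)y$ with $x, y$ determined by $Q$ and the position/spacing inside it, and there are at most $L^2$ choices of $(x,y)$ giving progressions inside a fixed $Q$, while each $m$-term progression in $\FF_p$ lies in at most $O(L)$ of the length-$L$ progressions $Q$ (roughly $L$ ways to extend). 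Counting with multiplicity, the number of pairs $(x,y) \in \FF_p^2$ with $x, x+y, \dots, x+(m-1)y \in A$ is at least
\[
\frac{(\text{number of good }Q)}{O(L)} \gg \frac{\alpha \cdot p^2}{L} \gg \frac{\alpha\, p^2}{M_m(\alpha/2)}.
\]
Dividing by $p^2$ gives $\Lambda_m(1_A) \gg \alpha / M_m(\alpha/2)$. To reach the stated bound $1/M_m(\alpha/2)^2$ one uses the crude inequality $\alpha \geqslant 1/M_m(\alpha/2)$, which follows from $\alpha/2 \leqslant \alpha_0$ and monotonicity together with the trivial fact $r_m(p)\geqslant 1$, or more directly from $N_m(\alpha) < \infty$ forcing $M_m(\alpha/2) \geqslant M_m(\alpha) \geqslant N_m(\alpha) \geqslant 1/\alpha$ type estimates; in any case $\alpha \gg 1/M_m(\alpha/2)$, so $\Lambda_m(1_A) \gg 1/M_m(\alpha/2)^2$ as claimed.

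The main obstacle I anticipate is bookkeeping the combinatorics of ``progressions of length $L$ inside $\FF_p$'' cleanly — specifically ensuring the Freiman isomorphism $Q \cong [L]$ is valid (which requires $L \leqslant p$, harmless for fixed $\alpha$ and large $p$, but one should state it) and getting the multiplicity counts (how many length-$L$ progressions contain a given $m$-term progression, and how many $m$-term progressions sit inside a given length-$L$ progression) correct up to the constants that are allowed to depend on $m$. None of this is deep; it is exactly the Varnavides averaging argument, and since the lemma statement says the precise version is borrowed from \cite{rimanic_wolf_2019}, I would cite that for the quantitative details and present the argument above as the sketch. The one genuinely necessary input beyond Varnavides is Lemma \ref{inverse functions}, which is what lets us replace the abstract ``$A$ of density $\alpha/2$ on a long enough progression contains an $m$-AP'' by the explicit threshold $L \geqslant M_m(\alpha/2) \geqslant N_m(\alpha/2)$.
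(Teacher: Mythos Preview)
Your approach matches the paper's exactly: the paper does not prove this lemma in detail but simply invokes the Varnavides averaging argument combined with Lemma~\ref{inverse functions} and cites \cite{rimanic_wolf_2019} for the precise quantitative form, which is just what you propose to do.

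One minor slip worth flagging in your sketch: the multiplicity of a fixed $m$-term progression $(x,y)$ among the length-$L$ progressions $Q_{a,d}$ is $O_m(L^2)$, not $O(L)$, because for each step ratio $y'\in\{1,\dots,\lfloor (L-1)/(m-1)\rfloor\}$ the common difference $d=y/y'$ is determined and then there are $\approx L$ admissible translates $a$; summing gives $\sim L^2/(2(m-1))$. With the corrected multiplicity the Varnavides count yields $\Lambda_m(1_A)\gg_m \alpha/M_m(\alpha/2)^2$ rather than $\alpha/M_m(\alpha/2)$, and combining with $\alpha\gg 1/M_m(\alpha/2)$ gives $\gg 1/M_m(\alpha/2)^3$. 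This weaker exponent is entirely harmless for the applications in the paper (in the proof of Theorem~\ref{main theorem, general case} any fixed power of $M$ is absorbed by the Behrend step), and since you already plan to defer the sharp constant to \cite{rimanic_wolf_2019}, the discrepancy is cosmetic.
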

We conclude this section with the proof of Lemma \ref{inverse functions}.
\begin{proof}[Proof of Lemma \ref{inverse functions}.]
Assume that $s_m$ is defined as in the statement of the lemma and that $r_m(p)\leqslant s_m(p)p$ for all prime $p\geqslant p_0$. Fix a prime number $p\geqslant p_0$ and $\alpha\in (0,\alpha_0]$. Suppose that $A\subset\FF_p$ of size $|A|=\alpha p$ lacks an $m$-term arithmetic progression. The assumption of $p\geqslant p_0$ implies that $|A|\leqslant r_m(p)\leqslant s_m(p)p$, or $\alpha\leqslant s_m(p)$. From the monotonicity of $s_m$ it follows that $p\leqslant M_m(\alpha)$.

Thus, if a subset $A\subset \FF_p$ of size $|A|=\alpha p$ for $0<\alpha\leqslant\alpha_0$ lacks $m$-term arithmetic progression, it must be that either $p\leqslant p_0$ or $p\leqslant M_m(\alpha)$, implying $N_m(\alpha)\leqslant\max\{p_0, M_m(\alpha)\}$. The definition of $p_0$ and monotonicity of $M_m$ imply that $p_0 = M_m(\alpha_0)\leqslant M_m(\alpha)$, and so $N_m(\alpha)\leqslant M_m(\alpha)$.

Conversely, suppose $N_m(\alpha)\leqslant M_m(\alpha)$ for $0<\alpha\leqslant\alpha_0$. Suppose that a set $A\subset\FF_p$ of size $|A|=\alpha p$ lacks an $m$-term arithmetic progression, and assume $0<\alpha\leqslant\alpha_0$, $p\geqslant p_0$. Then $p\leqslant N_m(\alpha)\leqslant M_m(\alpha)$, and so $\alpha \leqslant s_m(p)$. 

It thus follows that if a subset $A\subset \FF_p$ of size $|A|=\alpha p$ for $p\geqslant p_0$ lacks an $m$-term arithmetic progression, then either $\alpha\leqslant s_m(p)$ or $\alpha >\alpha_0$. If the latter holds, then $\alpha>\alpha_0$ implies $M_m(\alpha)<M_m(\alpha_0)=p_0$, and so this case is impossible whenever $p\geqslant p_0$. Thus we must have that $\alpha\leqslant s_m(p)$ whenever $p\geqslant p_0$.
\end{proof}

\section{Deriving upper bounds in Theorem \ref{main theorem, general case}}\label{deriving upper bounds}
This section is devoted to the proof of Theorem \ref{main theorem, general case} using Theorem \ref{counting theorem for the more difficult configuration} coupled with the notation from Section \ref{Counting arithmetic progressions in subsets of finite fields}.
\begin{proof}[Proof of Theorem \ref{main theorem, general case}]
Throughout this proof, all the constants are allowed to depend on $m,k$ and $P_m$, ..., $P_{m+k-1}$. From Theorem \ref{counting theorem for the more difficult configuration} it follows that
\begin{align*}
    \EE_{x,y}\prod_{j=0}^{m-1}\1_A(x+jy)\prod_{j=m}^{m+k-1}\1_A(x+P_j(y))
    =\left(\EE_{x,y}\prod_{j=0}^{m-1}\1_A(x+jy)\right)\alpha^k + O(p^{-c})
\end{align*}
If $A\subset\FF_p$ for $p\geqslant p_0$ has size $|A|=\alpha p$ and lacks progressions (\ref{generalized union of AP and GP}), then the expression on the left-hand side is $O(p^{-1})$, and so
\begin{align}\label{inequality with inverse functions}
    \left(\EE_{x,y}\prod_{j=0}^{m-1}\1_A(x+jy)\right)\alpha^k\ll p^{-c}.
\end{align}
Let $M_m$ be the inverse function for $s_m$ on $(0,\alpha_0]$, where $\alpha_0=s_m(p_0)$, and set $M=M_m(\alpha/2)$. The assumption $p\geqslant p_0$ and the fact that $s_m$ is decreasing imply that $0<\alpha\leqslant\alpha_0$. Applying Lemma \ref{Averaging over progressions} to (\ref{inequality with inverse functions}) gives $\alpha^{k}M^{-2}\ll p^{-c}$. Behrend's construction implies that $M$ grows faster than polynomially in $\alpha$: that is, for each $C>1$ there exists $c>0$ such that $M\geqslant c \alpha^{-C}$ \cite{behrend_1946}. Consequently, we have $M^{-3}\ll p^{-c}$ which implies that $M\gg p^c$ for a different constant $0<c<1$. From monotonicity of $s_m$ it follows that $\alpha\leqslant 2s_m(c p^c)$. 

\end{proof}

To illustrate the last bit of the above proof, we take Gowers's \cite{gowers_2001} estimate $$N_m(\alpha)\leqslant 2^{2^{\alpha^{-C}}}$$ for $m > 4$. Combined with $N_m(\alpha/2)\gg p^c$, it gives the inequality $2^{2^{C\alpha^{-C}}} \gg p^c$. After rearranging, it yields 
\begin{align*}
    \alpha\ll\frac{1}{(\log\log p)^c}.
\end{align*}
Note that the function $s_m(p)=(\log_2\log_2 p)^{-c}$ is precisely the inverse function of $M_m(\alpha)=2^{2^{\alpha^{-C}}}$ for an appropriate choice of constants.

\section{Proof of Theorem \ref{counting theorem for the more difficult configuration}}\label{main section}
Finally, we come to the main part of the paper, which is the proof of the counting theorem for the progression (\ref{generalized union of AP and GP}). Like before, all the constants here are allowed to depend on $m, k$ and $P_m, ..., P_{m+k-1}$. First, we lexicographically order the set $\NN^2_+$, i.e.
\begin{align*}
    (m,k)<(m',k')\iff m<m'\; {\rm{or}}\; (m=m'\; {\rm{and}}\; k<k').
\end{align*}
We induct on $(m,k)$ by following the lexicographic order on $\NN_+^2$. Let $\SSS(m,k)$ denote the statement of Theorem \ref{counting theorem for the more difficult configuration} for $(m,k)$; that is, $\SSS(m,k)$ holds iff for all linearly independent polynomials $P_m, ..., P_{m+k-1}$ of degree at least $m$ that do not span a polynomial of degree less than $m$ there exists a constant $c>0$ such that for all 1-bounded functions $f_0, ..., f_{m+k-1}$, we have
\begin{align*}
    &\EE_{x,y}\prod_{j=0}^{m-1}f_j(x+jy)\prod_{j=m}^{m+k-1}f_j(x+P_j(y))
    \\=&\left(\EE_{x,y}\prod_{j=0}^{m-1}f_j(x+jy)\right)\prod_{j=m}^{m+k-1}\EE f_j + O(p^{-c}).
\end{align*}

$\SSS(1,k)$ and $\SSS(2,k)$ follow from the work of Peluse \cite{peluse_2019b}, and they shall serve as our base cases. In the inductive step, we have to prove two cases:
\begin{enumerate}
    \item $\SSS(m,1)$, assuming the statement holds for all $(m',k')<(m,1)$ (although we shall only need to invoke $\SSS(m-1,2)$).
    \item $\SSS(m,k)$ for $k>1$, assuming it holds for $\SSS(m',k')$ with $(m',k')<(m,k)$.
\end{enumerate}
The first case turns out to be the simpler of the two, and we shall carry it out promptly. The second case is much more involved, and it is where most of the difficulties lie.

Throughout this section, we denote the counting operator appearing in the statement of the Theorem \ref{counting theorem for the more difficult configuration} by $\Lambda$ with appropriate subscripts. Thus,
\begin{align*}
    \Lambda_{m, P_m, ..., P_{m+k-1}}(f_0, ..., f_{m+k-1}) := \EE_{x,y}\prod_{j=0}^{m-1}f_j(x+jy)\prod_{j=m}^{m+k-1}f_j(x+P_j(y)).
\end{align*}
In particular, $\Lambda_m$ denotes the counting operator for $m$-term arithmetic progressions:
\begin{align*}
    \Lambda_m(f_0, ...,f_{m-1}):= \EE_{x,y}\prod_{j=0}^{m-1}f_j(x+jy).
\end{align*}
When $m,k$, and $P_m, ..., P_{m+k-1}$ are clear out of the context, we shall suppress the subscripts and denote the operator just by $\Lambda$.

\subsection{Proof of $\SSS(m,1)$ assuming $\SSS(m-1,2)$}
As advertised earlier, we first prove the inductive step for $\SSS(m,1)$. Let $P$ be a polynomial of degree at least $m$. Our goal is to show that the counting operator
\begin{align}\label{Lambda_{m,P}}
    \Lambda_{m, P}(f_0,...,f_m)=\EE_{x,y}\left(\prod_{j=0}^{m-1}f_j(x+jy)\right)f_m(x+P(y))
\end{align}
is in fact controlled by an operator involving an arithmetic progression of length $m-1$ of difference functions of $f_1, ..., f_{m-1}$. To accomplish this, we first rewrite (\ref{Lambda_{m,P}}) as
\begin{align*}
    \EE_{x}f_0(x)\EE_y \left(\prod_{j=1}^{m-1}f_j(x+jy)\right)f_m(x+P(y)).
\end{align*}
Applying the Cauchy-Schwarz inequality in $x$ together with 1-boundedness of $f_0$, changing variables, translating $x\mapsto x-y$, and finally using the triangle inequality, we obtain that
\begin{align*}
    |\Lambda_{m,P}(f_0,...,f_m)|^2 &\leqslant\EE_x \left|\EE_y \left(\prod_{j=1}^{m-1} f_j(x+jy)\right)f_m(x+P(y))\right|^2\\
    &\leqslant\EE_{x,y,h}\left(\prod_{j=1}^{m-1}\Delta_{jh} f_j(x+jy)\right)\overline{f_m(x+P(y))}f_m(x+P(y+h))\\
    &\leqslant\EE_h\left|\EE_{x,y}\left(\prod_{j=1}^{m-1}\Delta_{jh} f_j(x+(j-1)y)\right)\overline{f_m(x+P(y)-y)}f_m(x+P(y+h)-y)\right|.
\end{align*}
By the pigeonhole principle, there exists $h\neq 0$ such that 
\begin{align*}
    |\Lambda_{m,P}(f_0,...,f_m)|^2 &\leqslant \left|\EE_{x,y}\left(\prod_{j=1}^{m-1}\Delta_{jh} f_j(x+(j-1)y)\right)\overline{f_m(x+P(y)-y)}f_m(x+P(y+h)-y)\right|+O(p^{-1})\\
    &=|\Lambda_{m-1, P_m, P_{m+1}}(g_0, ..., g_{m-2}, \overline{f_m}, f_m)|+O(p^{-1})
\end{align*}
where we set
\begin{align*}
P_m(y) = P(y)-y, \quad  P_{m+1}(y) = P(y+h)-y \quad {\rm{and}} \quad
g_j(t) = \Delta_{(j+1)h}f_{j+1}(t).
\end{align*}
From $h\neq 0$ it follows that $P_m$, $P_{m+1}$ are linearly independent. Moreover, for any $(a,b)\neq (0,0)$, the polynomial $a P_m + b P_{m+1}$ has degree at least $m-1$, attaining this degree precisely when $a+b=0$. We have thus reduced the study of $\Lambda_{m,P}$ to the analysis of $\Lambda_{m-1,P_m,P_{m+1}}$, and so we are in the $\SSS(m-1,2)$ case. Applying Theorem \ref{counting theorem for the more difficult configuration} for this case, we see that 
\begin{align*}
    |\Lambda_{m,P}(f_0,...,f_m)|^2 &\leqslant|\Lambda_{m-1, P_m, P_{m+1}}(g_0, ..., g_{m-2}, \overline{f_m},f_m)|+O(p^{-1})\\
    &=|\Lambda_{m-1}(g_0,...,g_{m-2})|\cdot|\EE f_m|^2 + O(p^{-c})\\
    &\leqslant|\EE f_m|^2 + O(p^{-c})
\end{align*}
and hence
\begin{align*}
    |\Lambda_{m,P}(f_0,...,f_m)|\leqslant |\EE f_m|+O(p^{-c}).
\end{align*}

We have established so far that the $U^1$ norm of $f_m$ controls $\Lambda_{m,P}(f_0,...,f_m)$ up to a power-saving error, i.e. $||f_m||_{U^1}=0$ implies $|\Lambda_{m,P}(f_0,...,f_m)|=O(p^{-c})$. To utilise this fact, we decompose $f_m = \EE f_m + (f_m - \EE f_m)$ and split $\Lambda_{m,P}$ accordingly. The term involving $f_m - \EE f_m$ has size at most $O(p^{-c})$ because $\EE(f_m-\EE f_m) = 0$, and so
\begin{align*}
    \Lambda_{m, P}(f_0, ..., f_m) = \Lambda_m(f_0, ...,f_{m-1})\EE f_m + O(p^{-c}),
\end{align*}
as required.

\subsection{Proof of $\SSS(m,k)$, $k>1$}
Our next goal is to prove $\SSS(m,k)$ whenever $k>1$. The natural thing to try would be to prove this case in a similar manner we proved $\SSS(m,1)$; that is, to apply the Cauchy-Schwarz inequality to the counting operator
\begin{align}\label{counting operator}
    \Lambda_{m, P_m, ..., P_{m+k-1}}(f_0, ..., f_{m+k-1})
\end{align}
and bound it by the counting operator of
\begin{align*}
    \Lambda_{m-1,Q_m, R_m, ..., Q_{m+k-1}, R_{m+k-1}}(g_0, ..., g_{m-2}, \overline{f_m}, f_m, ..., \overline{f_{m+k-1}}, f_{m+k-1})
\end{align*}
where
\begin{align*}
Q_j(y) = P_j(y)-y, \quad 
R_j(y) = P_j(y+h)-y \quad {\rm{and}} \quad
g_j(t) = \Delta_{(j+1)h}f_{j+1}(t).
\end{align*}
However, this simple extension of the method used to prove $\SSS(m,1)$ does not work because there is no guarantee that $Q_m, R_m, ..., Q_{m+k-1}, R_{m+k-1}$ are linearly independent (and in general, they may not be), nor that any nonzero linear combination of them has degree at least $m-1$. To illustrate this problem, we look at 
\begin{align*}
    x,\; x+y,\; x+2y,\; x+y^3,\; x+y^4.
\end{align*}
Applying the Cauchy-Schwarz inequality and translating by $x\mapsto x-y$, we control this configuration by the counting operator of the configuration
\begin{align*}
    x,\; x+y,\; x+y^3-y,\; x+(y+h)^3-y,\; x+y^4-y,\; x+(y+h)^4-y.
\end{align*}
Note that the polynomials $y,\; y^3-y,\; (y+h)^3-y,\; y^4-y,\; (y+h)^4-y$ have degree at most 4, and there are 5 of them, hence there exist $a_1, ..., a_5, b$ not all zero such that
\begin{align*}
    a_1 y + a_2 (y^3-y) +a_3 ((y+h)^3-y) + a_4 (y^4-y) + a_5 ((y+h)^4-y) = b.
\end{align*}
Consequently, one cannot apply induction hypothesis to this configuration. One therefore needs to come up with a different method. 

Throughout this section, we let
\begin{align*}
    \Lambda:=\Lambda_{m, P_m, ..., P_{m+k-1}}.
\end{align*} 
Our general strategy for $\SSS(m,k)$, $k>1$ is to gradually replace each of $f_m, ..., f_{m+k-1}$ by additive characters. Our method follows very closely the techniques in \cite{peluse_2019a, peluse_2019b, peluse_prendiville_2019}, and we shall point the reader to the relevant statements in these papers for comparison. To replace arbitrary functions by characters, we introduce an inner induction loop, much like in the proof of Theorem 2.1 of \cite{peluse_2019b}. For $0\leqslant r\leqslant k$, let $\SSS(m,k,r)$ denote the statement that for all polynomials $P_m$, ..., $P_{m+k-1}$ satisfying the conditions of Theorem \ref{counting theorem for the more difficult configuration}, there exists $c>0$ such that
\begin{align*}
    &\Lambda_{m, P_m, ..., P_{m+k-1}}(f_0, ..., f_{m+r-1}, e_p(a_{m+r}\cdot), ..., e_p(a_{m+k-1}\cdot))\\
    &=\Lambda_{m}(f_0, ..., f_{m-1})\prod_{j=m}^{m+r-1}\EE f_j \prod_{j=m+r}^{m+k-1}1_{a_{j}=0}+O(p^{-c}).
\end{align*}
 for all 1-bounded functions $f_0, ..., f_{m+r-1}:\FF_p\to\CC$. We note that $\SSS(m,k,r)$ is the special case of $\SSS(m,k)$ restricted to the situation when $f_j = e_p(a_j\cdot)$ for $m+r\leqslant j\leqslant m+k-1$, and $\SSS(m,k,k)$ is equivalent to $\SSS(m,k)$. We shall therefore deduce $\SSS(m,k)$ by inducting on $0\leqslant r\leqslant k$ for fixed $(m,k)$. We start by proving the base case $\SSS(m,k,0)$, which by the homomorphism property of additive characters and assumptions on the polynomials $P_m, ..., P_{m+k-1}$ reduces to the statement in the following lemma.
\begin{lemma}\label{Vanishing of expression II}
Let $a\in\FF_p^\times$ and $m\in\NN_+$. Suppose that $P\in\ZZ[y]$ has degree at least m and that the functions $f_0, ..., f_{m-1}:\FF_p\to\CC$ are 1-bounded. Then
\begin{align}\label{vanishing of the correlation with character}
    |\Lambda_{m,P}(f_0, ..., f_{m-1}, e_p(a\cdot))|\leqslant O(p^{-c})
\end{align}
for a constant $c>0$ depending on $m$ and $P$ but not on $a$ or $f_0, ..., f_{m-1}$.
\end{lemma}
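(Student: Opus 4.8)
The plan is to bound the correlation $|\Lambda_{m,P}(f_0,\ldots,f_{m-1},e_p(a\cdot))|$ by showing it is controlled by a Gowers norm of the linear-phase function $e_p(a\cdot)$, and then exploiting the fact that additive characters have vanishing Gowers norms. Concretely, since the $m$-th function in the progression is weighted by a character and the first $m$ functions sit on an honest arithmetic progression $x, x+y, \ldots, x+(m-1)y$, while $P$ has degree $\geqslant m$, I expect that a sequence of Cauchy--Schwarz / van der Corput (``PET'') steps can be applied to eliminate $f_0, \ldots, f_{m-1}$ one at a time, each step differencing in an auxiliary shift variable, until one is left with an average over a single nonlinear configuration weighted only by $e_p(a\cdot)$.

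First I would peel off $f_0$: write $\Lambda_{m,P}$ as $\EE_x f_0(x)\, \EE_y (\prod_{j=1}^{m-1} f_j(x+jy)) e_p(a(x+P(y)))$, apply Cauchy--Schwarz in $x$ using $|f_0|\leqslant 1$, and expand the square by introducing a shift $h$. This replaces each $f_j$ by its multiplicative derivative $\Delta_{jh}f_j$ (still $1$-bounded) and replaces the phase $e_p(a(x+P(y)))$ by $e_p(a(P(y+h)-P(y)))$ — crucially, the linear term $ax$ cancels, so the character no longer depends on $x$. After a change of variables this leaves, for a pigeonholed $h\neq 0$, an average of the form $\EE_{x,y}(\prod_{j=1}^{m-1}\Delta_{jh}f_j(x+(j-1)y))\, \mathbf{b}(y)$, where $\mathbf{b}(y)=e_p(a(P(y+h)-P(y)))$ has degree $\deg P - 1 \geqslant m-1$ in $y$. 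Iterating this differencing $m-1$ times (a standard PET-induction bookkeeping) removes all the $f_j$ and produces, up to $O(p^{-c})$ and a pigeonholed choice of shift parameters, an exponential sum $\EE_y e_p(aQ(y))$ where $Q$ is a nonzero polynomial of degree between $1$ and $\deg P$ (nonzero because after differencing a degree-$d\geqslant m$ polynomial $m-1<d$ times one still has positive degree). Weyl's inequality (or the trivial bound for the degree-$1$ case) then gives the desired power saving $O(p^{-c})$ with $c$ depending only on $\deg P$ and $m$.

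The main obstacle will be managing the PET-induction bookkeeping cleanly: one must track that at each Cauchy--Schwarz step the ``diagonal'' phase that survives genuinely has degree dropping by exactly one and never degenerates to a constant before all the $f_j$ are eliminated, and one must handle the pigeonholing over the accumulated shift variables $h_1, \ldots, h_{m-1}$ so that the exceptional set where $Q$ degenerates contributes only $O(p^{-c})$. An alternative, possibly cleaner route that avoids a bespoke PET argument: combine Cauchy--Schwarz in $x$ with the identity expressing $\|f\|_{U^s}^{2^s}$ in terms of lower derivatives, and invoke Lemma \ref{von Neumann} together with a Weyl-type bound to show that $\Lambda_{m,P}(f_0,\ldots,f_{m-1},e_p(a\cdot))$ is bounded by $\|e_p(a\cdot)\|_{U^{m}}$ up to an error — but since $\|e_p(a\cdot)\|_{U^s}=1$ for linear phases, that naive approach fails, and one really does need the cancellation coming from $P$ being genuinely nonlinear of degree $\geqslant m$. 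So I expect the honest PET/Weyl route above to be the one that works, and the delicate point is precisely verifying the non-degeneracy of the final polynomial phase after the differencing.
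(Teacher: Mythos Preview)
Your approach is correct and coincides with the paper's: peel off the $f_j$ one at a time via Cauchy--Schwarz in $x$, so that each step replaces the phase polynomial by a discrete derivative of degree one lower, and finish with Weyl once all the $f_j$ are gone. The paper packages this as a clean induction on $m$ (rewriting the post--Cauchy--Schwarz expression as $\Lambda_{m-1,Q_h}$ with $Q_h(y)=P(y+h)-P(y)$ and the character still in the last slot, then averaging over $h$ with the triangle inequality) rather than unrolling the iteration and pigeonholing a specific tuple $(h_1,\ldots,h_{m-1})$; this sidesteps your bookkeeping worries about degeneracy, since the hypothesis $\deg P\geqslant m$ guarantees $\deg Q_h\geqslant m-1$ for every $h\neq0$, and the $h=0$ term contributes only $O(p^{-1})$.
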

\begin{proof}
We prove the statement by induction on $m$. For $m=1$, we have
\begin{align*}
    |\EE_{x,y}f_0(x)e_p(a(x+P(y))|=|\EE_{x}f_0(x)e_p(ax)|\cdot|\EE_y e_p(aP(y))|\ll p^{-c}
\end{align*}
by the 1-boundedness of $f_0$ and Weyl differencing. 

Suppose $m>1$. Then an application of the Cauchy-Schwarz inequality to remove $f_0$ followed by a change of variables gives
\begin{align*}
    |\Lambda_{m,P}(f_0, ..., f_{m-1}, e_p(a\cdot))|\leqslant |\EE_h\Lambda_{m-1,Q_h}(e_p(-a\cdot)\Delta_h f_1, ..., \Delta_{(m-1)h}f_{m-1}, e_p(a\cdot))|,
\end{align*}
where $Q_h(y) := P(y+h)-P(y)$. For $h\neq 0$, the degree of $Q_h$ satisfies $$\deg Q_h = \deg P - 1\geqslant m - 1.$$
By inductive hypothesis and triangle inequality, (\ref{vanishing of the correlation with character}) holds for $m$. The lemma follows by induction.
\end{proof}

The heart of the proof of $\SSS(m,k)$ for $k>1$ is thus to show that $\SSS(m,k,r+1)$ can be deduced from $\SSS(m,k,r)$. The next lemma states this more formally.
\begin{lemma}\label{S(m,k,r) implies S(m,k,r+1)}
Let $m\geqslant 3$, $k\geqslant 2$ and $0\leqslant r<k$ be natural numbers. Assume $\SSS(m,k,r)$ holds. Then $\SSS(m,k,r+1)$ holds as well.
\end{lemma}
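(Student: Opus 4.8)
The plan is to follow the standard PET/Peluse-style degree-lowering scheme, adapted here so that the innermost functions that are already additive characters do not obstruct the induction. Write the counting operator as $\Lambda = \EE_{x,y}\prod_{j=0}^{m-1}f_j(x+jy)\,f_{m+r}(x+P_{m+r}(y))\prod_{j=m}^{m+r-1}f_j(x+P_j(y))\prod_{j=m+r+1}^{m+k-1}e_p(a_j(x+P_j(y)))$, and single out the factor $f_{m+r}(x+P_{m+r}(y))$ as the one we want to replace by a character. First I would show that $\Lambda$ is controlled by a suitable Gowers-type norm of $f_{m+r}$: specifically, applying the Cauchy--Schwarz inequality enough times in the right variables (pulling out $f_0$ by a Cauchy--Schwarz in $x$, then iteratively differencing to strip off $f_1,\dots,f_{m-1}$ and the other $f_j$'s and characters), together with a Weyl-differencing / van der Corput step to exploit that $\deg P_{m+r}\geqslant m$, one deduces an estimate of the shape $|\Lambda| \ll \|f_{m+r}\|_{U^s}^{c} + O(p^{-c})$ for an appropriate $s$ depending on $m$ and the degrees. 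This is exactly the role played by the "local $U^s$-control" lemmas in \cite{peluse_2019b, peluse_2019a, peluse_prendiville_2019}, and Lemma \ref{Vanishing of expression II} (the $\SSS(m,k,0)$ base case) is the prototype of the computation one runs here.

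Next I would convert this Gowers-norm control into $U^2$-control, hence Fourier control. Using the identity $\|f\|_{U^s}^{2^s} = \EE_{h_1,\dots,h_{s-2}}\|\Delta_{h_1,\dots,h_{s-2}}f\|_{U^2}^4$ recalled in Section \ref{section on Gowers norms}, a large $\|f_{m+r}\|_{U^s}$ forces many derivatives $\Delta_{\underline h}f_{m+r}$ to have large $U^2$ norm, and by the $U^2$ inverse theorem each such derivative correlates with a linear phase. One then runs the usual argument (as in the cited Peluse papers): either $\|f_{m+r}\|_{U^s}\ll p^{-c}$, in which case $|\Lambda|\ll p^{-c}$ and we may replace $f_{m+r}$ by $\EE f_{m+r}$ at negligible cost (the fluctuation $f_{m+r}-\EE f_{m+r}$ also has small $U^1\le U^s$ norm — more precisely, split $f_{m+r}=\EE f_{m+r} + (f_{m+r}-\EE f_{m+r})$ and note the second piece is still $U^s$-small, giving $\Lambda = \Lambda_{m,P_m,\dots,P_{m+k-1}}(f_0,\dots,f_{m+r-1},\EE f_{m+r}, e_p(a_{m+r+1}\cdot),\dots) + O(p^{-c})$); or $\|f_{m+r}\|_{U^s}$ is bounded below, and we obtain a correlation of $f_{m+r}$ with a genuine additive character $e_p(a\cdot)$. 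In the latter case, a Fourier expansion $f_{m+r} = \sum_\phi \hat f_{m+r}(\phi) e_p(\phi\cdot)$ together with the $\ell^1$/$\ell^2$ bookkeeping lets us write $\Lambda$ as a sum over $\phi$ of terms in which $f_{m+r}$ has been replaced by $e_p(\phi\cdot)$, with the contributions from $\phi$ with small $|\hat f_{m+r}(\phi)|$ being absorbed into the error; this is precisely the mechanism by which $\SSS(m,k,r)$ upgrades to $\SSS(m,k,r+1)$, since after this step the factor indexed by $m+r$ is an additive character. Applying $\SSS(m,k,r)$ (the induction hypothesis) to each resulting operator — now with $r$ "honest" functions and $k-r$ characters, one of which came from $\phi$ — and summing, using that $\sum_\phi|\hat f_{m+r}(\phi)|^2 = \|f_{m+r}\|_{L^2}^2 \le 1$ and that only $\phi = a_{m+r}$-type indices survive in the main term, yields exactly $\SSS(m,k,r+1)$.

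There is one structural subtlety to handle carefully: when we difference $\Lambda$ to isolate $f_{m+r}$, the characters $e_p(a_j(x+P_j(y)))$ for $j>m+r$ get differenced along with everything else, producing phases like $e_p(a_j(P_j(y+h)-P_j(y)))$ etc.; since these are bounded and the key positivity/cancellation comes from the $P_{m+r}$ phase, they can be treated as harmless $1$-bounded weights $\mathbf b(\cdot)$ throughout, but one must make sure the Cauchy--Schwarz scheme is arranged so that $f_{m+r}$ — and not one of these characters — is the function left exposed. Likewise, after Fourier-expanding $f_{m+r}$ and substituting $e_p(\phi\cdot)$, the polynomials attached to the $x+P_j(y)$ slots are unchanged, so the hypothesis on the $P_j$'s (no nonzero combination of degree $<m$) is inherited verbatim and $\SSS(m,k,r)$ applies with no new case analysis.

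The main obstacle I expect is the first step: establishing the local Gowers-norm control $|\Lambda|\ll\|f_{m+r}\|_{U^s}^c + O(p^{-c})$ with a clean enough $s$ and with the power-saving error intact. This is where the degree hypothesis $\deg(a_m P_m + \dots) \ge m$ is genuinely used — it is what makes the Weyl-differencing step produce the needed equidistribution and prevents the obstruction exhibited by the $x,x+y,x+2y,x+y^2$ example — and it is the part requiring the most delicate manipulation of the PET tower and careful tracking of how the degrees of the $P_j$'s drop under differencing, so that the induction on $m$ (as in Lemma \ref{Vanishing of expression II}) closes. Everything after that — passing to $U^2$, invoking the inverse theorem, Fourier expansion, and feeding the result into $\SSS(m,k,r)$ — is routine bookkeeping of the kind carried out in \cite{peluse_2019b}.
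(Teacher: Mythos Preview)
Your outline has a genuine gap at the step you label ``routine bookkeeping''. You propose to deduce from large $\|f_{m+r}\|_{U^s}$ that $f_{m+r}$ correlates with an \emph{additive} character, and then Fourier-expand $f_{m+r}$ and feed each character into $\SSS(m,k,r)$. Neither half works. For an arbitrary $1$-bounded function, large $U^s$ norm does not force a large linear Fourier coefficient (e.g.\ $f(x)=e_p(x^2)$ has $\|f\|_{U^3}=1$ but $|\hat f(\phi)|=p^{-1/2}$ for all $\phi$); the $U^s$ inverse theorem only gives correlation with a degree $s-1$ phase. And the naive Fourier expansion $f_{m+r}=\sum_\phi \hat f_{m+r}(\phi)e_p(\phi\cdot)$ followed by termwise application of $\SSS(m,k,r)$ produces an error $\sum_\phi|\hat f_{m+r}(\phi)|\cdot O(p^{-c})$, and $\|\hat f_{m+r}\|_{\ell^1}$ can be as large as $p^{1/2}$, destroying the power saving.

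What the paper does instead --- and this is the heart of Peluse's degree-lowering method --- is shift attention from $f_{m+r}$ to the \emph{dual function} $F(x)=\EE_y\prod_{j\neq m+r}f_j(x+P_j(y)-P_{m+r}(y))$, so that $\Lambda=\langle F,\overline{f_{m+r}}\rangle$. A Hahn--Banach decomposition (Lemma~\ref{Hahn-Banach decomposition}) converts the PET bound $|\Lambda|\ll\|f_{m+r}\|_{U^s}^\beta+p^{-\beta}$ into control of $\Lambda$ by $\|F\|_{U^s}$. The degree lowering (Lemma~\ref{degree lowering}) is then carried out on $F$, not on $f_{m+r}$: because $F$ is itself an average over $y$, when one expands $\|\Delta_{\underline h}F\|_{U^2}^4$ and extracts a large Fourier coefficient $\phi(\underline h)$, one recovers after Cauchy--Schwarz an operator of the same shape as $\Lambda$ but with $e_p(\psi\cdot)$ sitting in the $(m+r)$-th slot. \emph{This} is where $\SSS(m,k,r)$ is invoked, to force $\psi=0$ for a dense set of parameters, which in turn yields $\|F\|_{U^{s-1}}\gg\|F\|_{U^s}^{O(1)}-O(p^{-c})$. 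Iterating drops to $\|F\|_{U^2}$, and then Lemma~\ref{$U^1$ control of the dual} (which uses $\SSS(m,k-1)$ from the outer induction, not $\SSS(m,k,r)$) bounds $\|F\|_{U^2}$ by $\min_j\|f_j\|_{U^1}$. Your proposal omits both the dual-function manoeuvre and the Hahn--Banach step, and misidentifies where the induction hypothesis enters.
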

The case $\SSS(m,k)$, $k>1$ thus follows by inducting on $r$ and the observation that $\SSS(m,k,k)=\SSS(m,k)$.

From now on, assume $(m,k,r)$ is fixed. In the remainder of this section, we outline the proof of Lemma \ref{S(m,k,r) implies S(m,k,r+1)}. We formulate consecutive steps of the proof as lemmas to be proved separately in the next section. Our first task in proving Lemma \ref{S(m,k,r) implies S(m,k,r+1)} is to show that $\Lambda$ is controlled by some Gowers norm of $f_{m+r}$. This follows from the so-called PET induction scheme, which originally appeared in Bergelson and Leibman's ergodic-theoretic proof of the polynomial Szemer\'{e}di theorem \cite{bergelson_leibman_1996} and was subsequently applied in the works of Prendiville and Peluse \cite{prendiville_2017, peluse_2019b, peluse_prendiville_2019} and Tao and Ziegler \cite{tao_ziegler_2008, tao_ziegler_2016, tao_ziegler_2018}.

\begin{lemma}[PET induction, Proposition 2.2 of \cite{peluse_2019b}]\label{PET induction}
Let $P_1, ..., P_l$ be nonconstant polynomials in $\ZZ[y]$ such that $P_i - P_j$ is nonconstant whenever $i\neq j$. Then for any $1\leqslant j\leqslant l$ there exist $s\in\NN$ and $0<\beta\leqslant 1$, depending only on the degrees and leading coefficients of $P_1, ..., P_l$, such that
\begin{align*}
    |\Lambda_{x, x+P_1(y), ..., x+P_l(y)}(f_0, ..., f_l)|\leqslant ||f_j||_{U^s}^\beta+O(p^{-\beta}).
\end{align*}
for all 1-bounded functions $f_0, ..., f_l:\FF_p\to\CC$.
\end{lemma}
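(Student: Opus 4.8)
The plan is to prove Lemma \ref{PET induction} by the standard PET induction on \emph{weight vectors}, a well-understood but bookkeeping-heavy argument. The idea is that a single application of the Cauchy--Schwarz inequality (in the variable $x$, after pulling out and estimating $f_0$) followed by a change of variables replaces the polynomial system $x+P_1(y),\dots,x+P_l(y)$ by a new, longer system in which one of the polynomials has been "differenced down": choosing a generic shift $h$ in the auxiliary variable, each $P_i(y)$ gives rise to two polynomials $P_i(y)-y$ and $P_i(y+h)-y$ (up to the affine normalisation needed to keep things in the right form), and one carefully tracks how the tuple of degrees and leading coefficients evolves. The key combinatorial fact, proved by Bergelson and Leibman, is that one can assign to each such system a \emph{weight} — a finite multiset of pairs (degree, multiplicity) — which strictly decreases in a suitable well-ordering under the differencing operation targeted away from a fixed index $j$, so the process terminates. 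When it terminates, the distinguished function $f_j$ sits on a linear progression of bounded length, at which point Lemma \ref{von Neumann} (the generalized von Neumann theorem) bounds the whole expression by a Gowers norm $\|f_j\|_{U^{s}}$ for some $s$ depending only on the number of iterations, hence only on the degrees and leading coefficients of $P_1,\dots,P_l$. Each Cauchy--Schwarz step costs a square root, which is why the final bound is of the form $\|f_j\|_{U^s}^\beta + O(p^{-\beta})$ with $\beta = 2^{-(\text{number of steps})}$; the $O(p^{-\beta})$ term collects the pigeonholing over the auxiliary shift $h$ (we need $h\neq 0$, losing a $1/p$ each time) and any contribution from degenerate $y$.

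Concretely, I would proceed as follows. First I would set up the notion of a system of polynomials and its weight vector, recording that the hypothesis "$P_i - P_j$ nonconstant for $i \neq j$" is exactly what guarantees that after the change of variables the polynomials remain pairwise essentially distinct, so the induction is well-posed; I would note that this invariant is preserved because differencing $P_i(y+h) - P_i(y)$ lowers degree by exactly one for $h\neq 0$ and distinctness of differences is maintained. Second, I would describe the single inductive step: write the average as $\EE_x f_0(x) \cdot \EE_y \prod_{i\ge 1} f_i(x+P_i(y))$, bound $|f_0|\le 1$, apply Cauchy--Schwarz in $x$ to square the inner average, expand into a double average over $y, y'$ (or over $y, h$ with $y' = y+h$), translate $x$ to restore a polynomial of the form $x + (\text{something})$ in the slot being reduced, and observe we have produced a new system with strictly smaller weight. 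Third, I would invoke the termination of PET induction (citing Bergelson--Leibman, or simply quoting it as the cited Proposition 2.2 of \cite{peluse_2019b}) to conclude that after $O(1)$ steps $f_j$ decorates a genuine arithmetic progression of length $O(1)$, and close with Lemma \ref{von Neumann}.

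The main obstacle — and the part that is genuinely delicate rather than routine — is verifying that the weight vector strictly decreases at \emph{every} step regardless of which polynomial one chooses to difference, and in particular ensuring that the distinguished index $j$ is never "used up": one must always be able to pick a non-$j$ polynomial to reduce until $f_j$ alone survives on a linear form, and one must check that the leading-coefficient/degree data stays in the regime where the change of variables does not accidentally make two polynomials equal or make the $f_j$-polynomial constant. This is exactly the content of the PET scheme and is somewhat intricate to formalise from scratch, which is why it is cleanest to cite it; since the statement is quoted verbatim as Proposition 2.2 of \cite{peluse_2019b}, for the purposes of this paper I would give the brief sketch above and refer the reader there (and to \cite{bergelson_leibman_1996}) for the full verification that the weight ordering is well-founded and decreases. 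The only point requiring care in \emph{quoting} it is to confirm that our $\Lambda$ includes the $f_0$ factor on the constant polynomial $x$; this is harmless since $|f_0|\le 1$ is discarded at the very first Cauchy--Schwarz step, so the cited bound applies directly to our $\Lambda_{m, P_m, \dots, P_{m+k-1}}$ with the linear polynomials $0, y, \dots, (m-1)y$ together with $P_m, \dots, P_{m+k-1}$, all of whose pairwise differences are nonconstant by the degree hypothesis on the $P_j$'s.
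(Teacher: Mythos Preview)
Your approach is essentially the same as the paper's: both treat this lemma as a citation of Proposition~2.2 of \cite{peluse_2019b} rather than proving PET induction from scratch, and your sketch of the weight-vector scheme is accurate background. The one point the paper handles that you gloss over is the reduction from the hypothesis ``$P_i-P_j$ nonconstant'' to Peluse's actual hypothesis ``distinct polynomials with zero constant term'': the statement is \emph{not} quite quoted verbatim, and the paper explicitly replaces each $P_i(y)$ by $P_i'(y):=P_i(y)-P_i(0)$ and each $f_i(t)$ by $f_i'(t):=f_i(t+P_i(0))$, then observes that the $f_i'$ have the same Gowers norms and the $P_i'$ are distinct with zero constant term, so Peluse's proposition applies directly. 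You should include this short reduction rather than asserting the citation is verbatim.
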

Our statement differs slightly from the statement of Proposition 2.2 in \cite{peluse_2019b} in that Peluse did not mention explicitly our condition that the difference between any two polynomials $P_i$, $P_j$ cannot be constant. However, she assumed throughout her paper that $P_1, ..., P_l$ were distinct polynomials with zero constant terms, which implies our condition. In our paper, the polynomials may have nonzero constant terms, in which case we replace $P_i(y)$ by $P'_i(y):=P_i(y)-P_i(0)$ and $f_i(t)$ by $f_i'(t):=f_i(t+P_i(0))$, so that $f_i(x+P_i(y))=f'_i(x+P_i'(y))$. The facts that $f_i$ and $f'_i$ have the same Gowers norms and that $P_1'$, ..., $P_l'$ are all distinct polynomials with zero constant terms allows us to reduce to the case covered in Proposition 2.2 of \cite{peluse_2019b}.

Our next step is to decompose $f_{m+r}$ into three terms using a decomposition based on the Hahn-Banach theorem.
\begin{lemma}[Hahn-Banach decomposition, Proposition 2.6 of \cite{peluse_2019b}]\label{Hahn-Banach decomposition}
Let $f:\FF_p\to\CC$ and $||\cdot||$ be a norm on the space of $\CC$-valued functions from $\FF_p$. Suppose $||f||_{L^2}\leqslant 1$. Then there exists a decomposition
\begin{align*}
    f = f_a + f_b + f_c
\end{align*}
with $||f_a||^*\leqslant p^{\delta_1}$, $||f_b||_{L^1}\leqslant p^{-\delta_2}$, $||f_c||_{L^\infty}\leqslant p^{\delta_3}$, $||f_c||\leqslant p^{-\delta_4}$ provided
\begin{align}\label{delta inequality}
    p^{\delta_4-\delta_1}+p^{\delta_2-\delta_3}\leqslant\frac{1}{2}.
\end{align}
\end{lemma}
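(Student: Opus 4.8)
The statement to prove is Lemma~\ref{Hahn-Banach decomposition}, the Hahn--Banach decomposition. Let me sketch a proof plan.

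\medskip

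The plan is to deduce this from the standard Hahn--Banach / Gowers-type decomposition theorem, often phrased as a consequence of the separating hyperplane theorem applied to a convex set in a finite-dimensional space. First I would set up the right convex bodies: let $K_1 = \{g : \|g\|^* \leqslant p^{\delta_1}\}$ be the polar ball of the given norm $\|\cdot\|$ scaled by $p^{\delta_1}$, and let $K_2 = \{g : \|g\|_{L^1} \leqslant p^{-\delta_2}\}$. Both are closed, convex, symmetric subsets of the finite-dimensional real vector space of $\mathbb{C}$-valued functions on $\FF_p$ (viewed as $\RR^{2p}$ with the inner product $\langle f,g\rangle = \EE_x f(x)\overline{g(x)}$, taking real parts). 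The Minkowski sum $K_1 + K_2$ is again closed convex symmetric. The goal is to show that if $f$ does not lie in $K_1 + K_2$, then one can peel off a piece $f_c = f - (f_a + f_b)$ with $f_a \in K_1$, $f_b \in K_2$ and $f_c$ small in the sense required; the quantitative separation is what produces the bounds $\|f_c\|_{L^\infty} \leqslant p^{\delta_3}$ and $\|f_c\| \leqslant p^{-\delta_4}$.

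\medskip

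Concretely, I would iterate: if $f \notin K_1 + K_2$, the separating hyperplane theorem gives a function $\phi$ with $\langle f, \phi\rangle > \sup_{g \in K_1 + K_2}\langle g, \phi\rangle = \sup_{g\in K_1}\langle g,\phi\rangle + \sup_{g \in K_2}\langle g, \phi\rangle$. By duality, $\sup_{g\in K_1}\langle g,\phi\rangle = p^{\delta_1}\|\phi\|$ and $\sup_{g \in K_2}\langle g,\phi\rangle = p^{-\delta_2}\|\phi\|_{L^\infty}$ (the $L^1$ ball's support function is the $L^\infty$ norm, up to normalisation constants coming from the $\EE$ versus $\sum$ conventions, which I would track carefully). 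Normalising $\phi$ suitably and using $\|f\|_{L^2}\leqslant 1$ together with Cauchy--Schwarz to bound $\langle f,\phi\rangle$, one derives a contradiction exactly when the inequality $p^{\delta_4 - \delta_1} + p^{\delta_2 - \delta_3} \leqslant \tfrac12$ holds — this is where condition~(\ref{delta inequality}) enters. Thus under that hypothesis $f \in K_1 + K_2$, giving $f_a, f_b$ with the stated $\|\cdot\|^*$ and $L^1$ bounds; setting $f_c = f - f_a - f_b$ and re-examining the separation functional that would detect $f_c$ being large in either $\|\cdot\|$ or $\|\cdot\|_{L^\infty}$ yields the two bounds on $f_c$.

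\medskip

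Alternatively — and this is probably cleaner to write — I would simply cite the abstract decomposition theorem as it appears in Gowers--Wolf or in Peluse's paper (Proposition~2.6 of \cite{peluse_2019b}), since the statement here is identical to hers, and note that it follows from the Hahn--Banach theorem applied to the pair of norms $\|\cdot\|^*$ and $\|\cdot\|_{L^1}$ on the finite-dimensional space $\CC^{\FF_p}$; the three-term split arises from decomposing against a \emph{sum} of two ``structured/small'' pieces, with $f_c$ the residual. The role of $\|f\|_{L^2}\leqslant 1$ is only to control the pairing of $f$ with the separating functional via $\|\phi\|_{L^2}$, using $\|\phi\|_{L^2}^2 \leqslant \|\phi\|_{L^1}\|\phi\|_{L^\infty}$ or a direct Cauchy--Schwarz.

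\medskip

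The main obstacle I anticipate is purely bookkeeping rather than conceptual: one must be scrupulous about the normalisation conventions (expectations $\EE_x$ versus sums $\sum_x$, and the factor of $p$ relating $\ell^s$ to $L^s$ norms) so that the support functions of $K_1$ and $K_2$ come out to be exactly $p^{\delta_1}\|\cdot\|$ and $p^{-\delta_2}\|\cdot\|_{L^\infty}$ with no stray powers of $p$, since an error here would shift the exponents in~(\ref{delta inequality}). The underlying functional-analytic content is entirely standard.
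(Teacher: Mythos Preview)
The paper does not prove this lemma at all: it is quoted verbatim as Proposition~2.6 of \cite{peluse_2019b} and simply cited, with a remark that the idea goes back to Gowers and Wolf. Your ``alternatively'' paragraph is therefore exactly what the paper does, and is the appropriate thing to write here.

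Your attempted direct argument, however, has a genuine gap. The decomposition is a \emph{three}-term one, and the Hahn--Banach step must accordingly be run against the Minkowski sum of \emph{three} convex bodies, namely
\[
K_1=\{g:\|g\|^*\leqslant p^{\delta_1}\},\quad K_2=\{g:\|g\|_{L^1}\leqslant p^{-\delta_2}\},\quad K_3=\{g:\|g\|_{L^\infty}\leqslant p^{\delta_3}\}\cap\{g:\|g\|\leqslant p^{-\delta_4}\}.
\]
Your sketch omits $K_3$ and asserts that the hypothesis~\eqref{delta inequality} forces $f\in K_1+K_2$; this is false in general (and would make $f_c=0$ always, which it is not). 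The parameters $\delta_3,\delta_4$ enter the argument precisely through the support function of $K_3$: if $f\notin K_1+K_2+K_3$, a separating functional $\phi$ can be normalised so that $\phi\in K_3$, whence $h_{K_3}(\phi)\geqslant\|\phi\|_{L^2}^2$, and combining this with $|\langle f,\phi\rangle|\leqslant\|\phi\|_{L^2}$, the duality identities $h_{K_1}(\phi)=p^{\delta_1}\|\phi\|$, $h_{K_2}(\phi)=p^{-\delta_2}\|\phi\|_{L^\infty}$, and the bounds on $\|\phi\|,\|\phi\|_{L^\infty}$ coming from membership in $K_3$, yields the contradiction via~\eqref{delta inequality}. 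With only $K_1+K_2$ there is no mechanism to produce a contradiction, since no inequality links $\|\phi\|_{L^2}$ to $\|\phi\|$ and $\|\phi\|_{L^\infty}$ for an arbitrary norm $\|\cdot\|$.
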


This decomposition was pioneered by Gowers and Wolf in their work on true complexity of linear forms \cite{gowers_wolf_2011a, gowers_wolf_2011b, gowers_wolf_2011c, gowers_2010}. The variant that we are using is due to Peluse and appeared in \cite{peluse_2019b, peluse_prendiville_2019}. The dual norm in the statement of Lemma \ref{Hahn-Banach decomposition} is defined by $||f||^*=\sup\{|\langle f,g\rangle|: ||g||_\infty\leqslant 1\}$.

The notation has already become rather formidable, and it will become even more so in the further part of the proof. To make it more palatable, we let $P_j(y) := jy$ for ${0\leqslant j\leqslant m-1}$ and $f_j(t) :=e_p(a_j t)$ for $m+r+1\leqslant j\leqslant m+k-1$ for the rest of Section \ref{main section}.

Using Hahn-Banach decomposition, we can write $f_{m+r}$ as a sum of three functions: the first has not too big $U^s$-dual norm, the second has small $L^1$ norm, and the third has a small $U^s$ norm and not too big $L^\infty$ norm. By taking appropriate values of $\delta_1, \delta_2, \delta_3, \delta_4$, we get rid of two error terms and only work with $f_a$. This gives us control over $\Lambda_{m, P_m, ..., P_{m+k-1}}(f_0, ..., f_{m+k-1})$ by the $U^s$ norm of a dual function
\begin{align}\label{dual function}
    F(x):=\EE_y\prod_{j=0}^{m+r-1} f_j(x+P_j(y)-P_{m+r}(y))\prod_{j=m+r+1}^{m+k-1}f_j(x+P_j(y)-P_{m+r}(y))
\end{align}
and allows us to essentially replace $f_{m+r}$ in the $\Lambda_{m, P_m, ..., P_{m+k-1}}$ operator by a character. We call $F$ a ``dual function" because $\Lambda_{m, P_m, ..., P_{m+k-1}}(f_0,...,f_{m+k-1})=\langle F,\overline{f_{m+r}}\rangle$.

In general, higher degree Gowers norms control lower degree norms but the converse is not true. For the special case of the dual function $F$, we however show that $||F||_{U^s}$ is indeed controlled by $||F||_{U^2}$ for any $s\in\NN$. We achieve this in the lemma below which we encourage the reader to compare with Lemma 4.1 of \cite{peluse_2019b} and Proposition 6.6 of \cite{peluse_prendiville_2019}.

\begin{lemma}[Degree lowering]\label{degree lowering}
Let $F$ be defined as in (\ref{dual function}). For each $s> 2$,
\begin{align*}
    ||F||_{U^{s-1}}= \Omega(||F||_{U^s}^{2^{2s-1}})-O(p^{-c})
\end{align*}
for $c>0$ depending on $m, k,$ and $P_m, ..., P_{m+k-1}$ but not on $f_0, ..., f_{m+k-1}$.
As a consequence,
\begin{align*}
    ||F||_{U^2} = \Omega(||F||_{U^s}^{2^{(s-2)(s+2)}}) - O(p^{-c}).
\end{align*}
\end{lemma}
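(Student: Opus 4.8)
The plan is to establish the single-step estimate $\|F\|_{U^{s-1}}=\Omega(\|F\|_{U^s}^{2^{2s-1}})-O(p^{-c})$ for $s>2$, since the final consequence then follows by iterating this inequality $s-2$ times (each application replaces $s$ by $s-1$ and raises the exponent to a power, and multiplying the resulting exponents $2^{2s-1},2^{2s-3},\dots,2^5$ together gives $2^{(s-2)(s+2)}$ as claimed; the $O(p^{-c})$ errors stay under control because $\|F\|_\infty\leqslant 1$ and there are only boundedly many iterations). So the real work is the one-step inequality.

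Here is how I would run the single step. Start from the inverse-type identity $\|F\|_{U^s}^{2^s}=\EE_{h_1,\dots,h_{s-2}}\|\Delta_{h_1,\dots,h_{s-2}}F\|_{U^2}^4$. If $\|F\|_{U^s}$ is not already $O(p^{-c})$, then by an averaging (pigeonhole) argument a $\gg\|F\|_{U^s}^{2^s}$-proportion of the tuples $\hh=(h_1,\dots,h_{s-2})$ have $\|\Delta_{\hh}F\|_{U^2}\gg\|F\|_{U^s}^{c}$. For each such $\hh$, the $U^2$-inverse theorem (i.e.\ $\|g\|_{U^2}=\|\hat g\|_{\ell^4}$ together with the pointwise bound on the largest Fourier coefficient quoted in Section~\ref{section on Gowers norms}) produces a frequency $\phi(\hh)\in\FF_p$ with $|\widehat{\Delta_{\hh}F}(\phi(\hh))|\gg\|F\|_{U^s}^{c}$. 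Now one unpacks $\Delta_{\hh}F$ using the explicit definition (\ref{dual function}) of $F$ as an average over $y$ of a product of shifts of the $f_j$; because the shifts are the \emph{linear} forms $P_j(y)-P_{m+r}(y)$ (the quadratic and higher terms of the $P_j$ cancel in this difference once we have passed to the dual function — this is exactly why $F$ behaves like a linear-forms dual function), the derivative $\Delta_{\hh}F$ has a clean structure, and taking its Fourier transform at $\phi(\hh)$ one obtains, after a change of variables, an expression of the form $\EE_{\text{stuff}}\,(\text{product of }\Delta\text{'s of the }f_j)\,e_p(\phi(\hh)\cdot L(y,\hh))$ that can be bounded. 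The key point, following Peluse (Lemma~4.1 of \cite{peluse_2019b}) and Peluse--Prendiville (Proposition~6.6 of \cite{peluse_prendiville_2019}), is to show that the frequency $\phi(\hh)$ must in fact depend on $\hh$ in a \emph{low-complexity} (roughly, polynomial/linear) way: one applies Cauchy--Schwarz several times to remove the $f_j$ one at a time, invokes Weyl-type (Weyl differencing / Gauss sum) estimates to kill the resulting exponential sums unless $\phi(\hh)$ satisfies a suitable algebraic constraint, and concludes that $\phi(\hh)$ agrees with a fixed polynomial expression in $\hh$ for a large proportion of $\hh$. Feeding this structured frequency back into $\EE_{\hh}|\widehat{\Delta_{\hh}F}(\phi(\hh))|^{2^{\text{something}}}$ and reorganising, one recognises the result as a lower bound for a lower-degree Gowers norm $\|F\|_{U^{s-1}}$ (up to the power loss recorded in the exponent $2^{2s-1}$ and an admissible $O(p^{-c})$ error), which is precisely the claimed inequality.

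\textbf{Main obstacle.} The delicate part is the middle step: showing that the Fourier frequency $\phi(\hh)$ obtained from the $U^2$-inverse theorem is forced to vary \emph{smoothly} (polynomially) in the parameters $\hh$, rather than arbitrarily. This is where one genuinely needs the hypothesis on the polynomials $P_m,\dots,P_{m+k-1}$ (that no nonzero linear combination has degree $<m$): it guarantees that the relevant phase polynomials appearing after the Cauchy--Schwarz/Weyl steps are of high enough degree to be equidistributed, so that the exponential sums do vanish unless the frequency lies on the expected ``diagonal''. Managing the bookkeeping of how many Cauchy--Schwarz applications are needed (hence the precise exponent $2^{2s-1}$), and keeping every error term at the power-saving scale $O(p^{-c})$ throughout these manipulations, is the technical heart of the argument; everything else is either a direct appeal to the standard Gowers-norm identities recalled in Section~\ref{section on Gowers norms} or a routine iteration.
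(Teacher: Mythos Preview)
Your overall architecture (pass to $U^2$ of derivatives, extract a frequency $\phi(\hh)$, show $\phi$ has low-complexity structure, feed back to get $U^{s-1}$) matches the paper, and the iteration to reach $U^2$ is fine. But two concrete points in the middle step are wrong, and the second one is a genuine gap.

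First, your parenthetical claim that the shifts $P_j(y)-P_{m+r}(y)$ in the dual function (\ref{dual function}) are \emph{linear} forms is false. For $0\leqslant j\leqslant m-1$ one has $P_j(y)=jy$, so $P_j(y)-P_{m+r}(y)=jy-P_{m+r}(y)$ has degree $\deg P_{m+r}\geqslant m\geqslant 3$; for $m\leqslant j\leqslant m+k-1$, $j\neq m+r$, the difference $P_j-P_{m+r}$ is generically still of degree $\geqslant m$ by the hypothesis on the polynomials. Nothing cancels. The dual $F$ is therefore genuinely a \emph{polynomial}-progression dual, not a linear-forms dual, and there is no shortcut of the kind you describe.

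Second, and more seriously, the engine that forces structure on $\phi(\hh)$ is \emph{not} a direct Weyl/Gauss-sum estimate as you propose; it is the induction hypothesis $\SSS(m,k,r)$. After the $s-2$ Cauchy--Schwarz steps (applied in the $h_i$-variables, not ``to remove the $f_j$ one at a time''), the paper arrives at an average over $(\hh,\hh')$ of quantities of the form
\[
\left|\EE_{x,y}\Big(\prod_{0\leqslant j\leqslant m+r-1} g_j(x+P_j(y))\Big)\,e_p\big(\psi(\hh,\hh')(x+P_{m+r}(y))\big)\right|^2,
\]
i.e.\ the full counting operator $\Lambda_{m,P_m,\dots,P_{m+k-1}}$ with the $(m+r)$-th function specialised to a character of frequency $\psi(\hh,\hh')=\sum_{\ww}(-1)^{|w|}\phi(\hh^{(\ww)})$. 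It is precisely $\SSS(m,k,r)$ that says this is $O(p^{-c})$ unless $\psi(\hh,\hh')=0$; no amount of Weyl differencing alone will do this, because the functions $g_0,\dots,g_{m-1}$ weighting the arithmetic-progression part are arbitrary $1$-bounded functions, not characters. The resulting constraint $\psi=0$ then says $\phi(\hh')$ is a sum of $s-2$ functions each missing one coordinate (not that $\phi$ is a fixed polynomial), and a further round of $s-2$ Cauchy--Schwarz steps converts this into $\|F\|_{U^{s-1}}$. Your write-up omits the appeal to $\SSS(m,k,r)$ entirely, and without it the argument does not close.
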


Having a control by the $U^2$ norm of the dual function $F$ is important because this norm is in turn controlled by the $U^{1}$ norms of the component functions $f_m, ..., f_{m+r-1}, f_{m+r+1}, ..., f_{m+k-1}$, which follows from Lemma \ref{Vanishing of expression II} coupled with $\SSS(m,k-1)$. Recalling that $f_j(t) :=e_p(a_j t)$ for $m+r+1\leqslant j\leqslant m+k-1$ and so $||f_j||_{U^1}=1_{a_j=0}$ for these values of $j$, we obtain the following lemma.

\begin{lemma}[$U^1$ control of the dual]\label{$U^1$ control of the dual}
Let $F$ be defined as in (\ref{dual function}). Then
\begin{align*}
    ||F||_{U^2}\leqslant\min_{m\leqslant j\leqslant m+r-1}||f_j||_{U^1}^\frac{1}{2}\cdot\prod_{j=m+r+1}^{m+k-1}1_{a_j=0}+O(p^{-c})
\end{align*}
for some $c>0$ depending on $m, k,$ and $P_m, ..., P_{m+k-1}$ but not on $f_0, ..., f_{m+k-1}$.
\end{lemma}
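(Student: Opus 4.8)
The plan is to expand $\|F\|_{U^2}$ and recognize the resulting expression as a linear configuration counting operator weighted by derivatives of the component functions, to which the hypotheses $\SSS(m,k-1)$ and Lemma~\ref{Vanishing of expression II} apply. Recall that $\|F\|_{U^2}^4 = \EE_{x,h_1,h_2}\prod_{w\in\{0,1\}^2}\mathcal C^{|w|}F(x+w_1h_1+w_2h_2)$. Substituting the definition (\ref{dual function}) of $F$, each of the four copies of $F$ brings in its own averaging variable $y$; after renaming, we obtain an average over $x$, $h_1$, $h_2$ and four variables $y_{00},y_{01},y_{10},y_{11}$ of a product over $j$ of the shape $\prod_{w}\mathcal C^{|w|}f_j\big(x + w_1h_1 + w_2h_2 + P_j(y_w) - P_{m+r}(y_w)\big)$. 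The key observation (exactly as in the passage from $F$ to a linear form in \cite{peluse_2019b, peluse_prendiville_2019}) is that after an appropriate change of variables in the $y_w$ — for instance writing $y_{01}=y_{00}+u_1$, $y_{10}=y_{00}+u_2$, $y_{11}=y_{00}+u_1+u_2+\text{(corrections)}$, or more robustly by applying van der Corput/Cauchy--Schwarz in the $y$ variables to linearise — the polynomial dependence on a distinguished variable collapses, and one is left with an honest \emph{linear} configuration in the remaining variables whose forms are pairwise independent. At that point one recognizes the inner expression as $\Lambda_{m,Q_m,\dots}$-type operators of \emph{total length strictly smaller} in the polynomial part, so that either $\SSS(m,k-1)$ applies (controlling the count by $\prod_{j}\EE f_j$ up to $O(p^{-c})$, hence by $\min_j \|f_j\|_{U^1}$ after pulling out one factor) or, when a character $e_p(a_j\cdot)$ with $a_j\neq0$ survives, Lemma~\ref{Vanishing of expression II} forces the whole thing to be $O(p^{-c})$; since $\|e_p(a_j\cdot)\|_{U^1}=1_{a_j=0}$, the two cases are uniformly summarised by the bound in the statement.

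Concretely, I would proceed in the following order. First, expand $\|F\|_{U^2}^4$ and interchange the order of the $x,h_1,h_2$ expectation with the four $y$-expectations, grouping terms by index $j\in\{0,\dots,m-1\}\cup\{m,\dots,m+r-1\}\cup\{m+r+1,\dots,m+k-1\}$; the indices $j\le m-1$ with $P_j(y)=jy$ contribute linear shifts that merge harmlessly, the indices $j\ge m+r+1$ with $f_j=e_p(a_j\cdot)$ factor out a clean term $\prod_j 1_{a_j=0}$ plus an error controlled by Weyl sums unless $a_j=0$. Second, for the genuinely polynomial indices $m\le j\le m+r-1$, apply Cauchy--Schwarz (or van der Corput) in the $y$-variables a bounded number of times to reduce the polynomial phases $P_j(y_w)-P_{m+r}(y_w)$, across the four cube-vertices, to a configuration where one distinguished index, say $j=m$ (or whichever minimises $\|f_j\|_{U^1}$), is weighted by $f_m$ composed with a \emph{linear} form in fresh variables, while the other indices carry multiplicative derivatives $\Delta$ of $f_j$ that are still $1$-bounded. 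Third, invoke $\SSS(m,k-1)$ on the resulting shorter polynomial progression to replace $\prod_{j\ne m}(\text{derivatives of }f_j)$ by their means, leaving $\Lambda_m(\dots)\cdot\EE(\text{stuff})\cdot\langle f_m,\cdot\rangle$; bound the $\Lambda_m$ factor by $1$ via Lemma~\ref{von Neumann} and the extra means by $1$ via $1$-boundedness, so that what remains is $\le \|f_m\|_{U^1}^{\text{power}} + O(p^{-c})$. Fourth, take fourth roots and track the exponent (a fixed power of $2$ depending only on how many Cauchy--Schwarz steps were used) to land on the clean exponent $\tfrac12$ as stated — or, if the bookkeeping is awkward, one simply states the bound with an unspecified positive power, which is all that is needed downstream; I would match the exponent $\tfrac12$ asserted here by being careful that $U^1$-control at the end only loses a single square root relative to the $U^2$ on the left.

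The main obstacle I anticipate is \textbf{step two}: ensuring that after expanding the $U^2$ norm, the four polynomial phases $P_j(y_{00})-P_{m+r}(y_{00}),\dots,P_j(y_{11})-P_{m+r}(y_{11})$ can be disentangled so that some index is isolated with a purely \emph{linear} weight, while simultaneously preserving the non-degeneracy (pairwise independence, degree conditions) needed to legitimately apply $\SSS(m,k-1)$. This is precisely the technical core of the analogous arguments in \cite{peluse_2019b} (Lemma~4.1) and \cite{peluse_prendiville_2019} (Proposition~6.6): the dual function $F$ is special enough that its $U^2$ norm, unlike a generic function's, \emph{does} reduce to a lower-complexity count, but verifying this requires a careful change of variables exploiting that the same polynomial $P_{m+r}$ is subtracted from every term in (\ref{dual function}) — this shared subtraction is what makes the cross-differences in the $U^2$ expansion linear in the relevant variable. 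A secondary, more routine, difficulty is handling the degenerate sub-cases where some $P_j - P_{m+r}$ is constant or where changes of variables become singular; these are dealt with by the usual device of isolating an $O(p^{-1})$-density bad set of parameters and absorbing it into the error term.
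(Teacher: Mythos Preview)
Your proposal takes a much harder route than necessary, and the difficulty you anticipate in ``step two'' is real enough that the argument as sketched would likely not close. You are essentially trying to reprove the degree-lowering lemma (Lemma~\ref{degree lowering}, cf.\ Proposition~6.6 of \cite{peluse_prendiville_2019}) rather than the present lemma, which is far simpler.

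The key observation you are missing is that there is no need to expand $\|F\|_{U^2}^4$ over the cube at all. By the $U^2$ inverse theorem, $\|F\|_{U^2}^2 \leqslant \max_{\alpha\in\FF_p}|\hat F(\alpha)|$, and the Fourier coefficient $\hat F(\alpha)=\EE_x F(x)e_p(\alpha x)$ is, after the change of variables $x\mapsto x+P_{m+r}(y)$, \emph{literally} the counting operator $\Lambda_{m,P_m,\dots,P_{m+k-1}}$ with the function $e_p(\alpha\cdot)$ inserted in the $(m+r)$-th slot. No linearisation, no van der Corput, no disentangling of four $y$-variables is required. For $\alpha\neq 0$ this operator is $O(p^{-c})$ by Lemma~\ref{Vanishing of expression II} (or directly by the hypothesis $\SSS(m,k,r)$), and for $\alpha=0$ it is precisely $\Lambda_{m,P_m,\dots,P_{m+r-1},P_{m+r+1},\dots,P_{m+k-1}}(f_0,\dots,f_{m+r-1},f_{m+r+1},\dots,f_{m+k-1})$, to which $\SSS(m,k-1)$ applies verbatim, yielding $|\Lambda_m(f_0,\dots,f_{m-1})|\prod_{j\neq m+r}|\EE f_j|+O(p^{-c})$. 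Bounding $|\Lambda_m|\leqslant 1$ and taking square roots gives the statement with the clean exponent $\tfrac12$.

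Your plan to expand the full $U^2$ cube, introduce four independent $y_w$, and then Cauchy--Schwarz them back together is exactly the machinery used in Lemma~\ref{degree lowering} to pass from $U^s$ to $U^{s-1}$; it is overkill here and, as you correctly flag, the ``collapse to a linear form'' step is not automatic and would need the same careful low-rank phase analysis carried out there. The paper sidesteps all of this because the dual function is constructed precisely so that pairing it with a character reproduces the original operator.
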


Combining the estimates of two previous lemmas with the Hahn-Banach decomposition, we get a control of the $\Lambda$ operator by $U^1$ norms of  $f_m, ..., f_{m+r-1}, f_{m+r+1}, ...,$  $f_{m+k-1}$.

\begin{lemma}[$U^1$ control of $\Lambda$, cf. Theorem 7.1 of \cite{peluse_prendiville_2019}]\label{$U^1$ control of the operator}
There exist constants $c,c'>0$ and $s\in\NN$ depending only on $m,k, P_m, ..., P_{m+k-1}$ but not on $f_0, ..., f_{m+k-1}$ such that
\begin{align*}
    |\Lambda(f_0, ..., f_{m+k-1})| &\ll p^{c'} \min_{m\leqslant j\leqslant m+r-1}||f_j||_{U^1}^{2^{-s}}\cdot\prod_{j=m+r+1}^{m+k-1}1_{a_j=0} +p^{-c}.
\end{align*}
\end{lemma}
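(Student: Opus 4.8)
\textbf{Proof plan for Lemma \ref{$U^1$ control of the operator}.}

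The plan is to chain together the three preceding lemmas---the PET/degree-lowering chain (Lemma \ref{degree lowering}), the $U^1$ control of the dual (Lemma \ref{$U^1$ control of the dual}), and the Hahn--Banach decomposition (Lemma \ref{Hahn-Banach decomposition})---in a way that converts control of $\Lambda$ by a high Gowers norm of $f_{m+r}$ into control by the $U^1$ norms of the remaining functions. First I would apply Lemma \ref{PET induction} (via the substitution replacing $P_j(y)$ by $P_j(y)-P_j(0)$ so the hypotheses are met) to obtain $s\in\NN$ and $\beta\in(0,1]$ with $|\Lambda(f_0,\dots,f_{m+k-1})|\leqslant \|f_{m+r}\|_{U^s}^{\beta}+O(p^{-\beta})$. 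Then I would run the Hahn--Banach decomposition of Lemma \ref{Hahn-Banach decomposition} on $f_{m+r}$ with respect to the $U^s$ norm, splitting $f_{m+r}=f_a+f_b+f_c$ and choosing the $\delta_i$ (for instance $\delta_1,\delta_3$ tiny positive powers and $\delta_2,\delta_4$ comparably small, all tied to $\beta$ and $s$) so that \eqref{delta inequality} holds. The $f_b$ piece contributes $\ll p^{-\delta_2}\cdot p^{\text{(small)}}$ by crudely bounding the $\Lambda$-operator pointwise, and the $f_c$ piece contributes $\ll \|f_c\|_{U^s}^{\beta}p^{\text{(small)}}\ll p^{-\delta_4\beta}p^{\text{(small)}}$ after re-applying PET; both are $O(p^{-c})$ for suitable $c$. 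So up to acceptable error, $|\Lambda(f_0,\dots,f_{m+k-1})|$ is controlled by $|\langle F,\overline{f_a}\rangle|$ where $F$ is the dual function \eqref{dual function}, and then by $\|f_a\|_{U^s}^{*}\cdot\|F\|_{U^s}\leqslant p^{\delta_1}\|F\|_{U^s}$ using the definition of the dual norm together with the fact that the $U^s$ dual norm is the dual of $\|\cdot\|_{U^s}$.

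Next I would feed $\|F\|_{U^s}$ into the degree-lowering estimate of Lemma \ref{degree lowering}, which yields $\|F\|_{U^s}\ll \|F\|_{U^2}^{2^{-(s-2)(s+2)}}+O(p^{-c})$ (rearranging the stated $\|F\|_{U^2}=\Omega(\|F\|_{U^s}^{2^{(s-2)(s+2)}})-O(p^{-c})$, after absorbing the error---if $\|F\|_{U^s}$ is itself smaller than the error term there is nothing to prove). Then Lemma \ref{$U^1$ control of the dual} bounds $\|F\|_{U^2}$ by $\min_{m\leqslant j\leqslant m+r-1}\|f_j\|_{U^1}^{1/2}\prod_{j=m+r+1}^{m+k-1}1_{a_j=0}+O(p^{-c})$. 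Substituting back up the chain and collecting exponents, I get $|\Lambda(f_0,\dots,f_{m+k-1})|\ll p^{\delta_1}\big(\min_j\|f_j\|_{U^1}^{1/2}\prod 1_{a_j=0}\big)^{\beta\cdot 2^{-(s-2)(s+2)}\cdot 2^{-1}}+p^{-c}$, which after renaming $s$ (to a larger value absorbing all the nested powers of $2$), setting $c':=\delta_1$, and noting that $1_{a_j=0}$ raised to any positive power equals $1_{a_j=0}$, is exactly the claimed inequality $|\Lambda(f_0,\dots,f_{m+k-1})|\ll p^{c'}\min_{m\leqslant j\leqslant m+r-1}\|f_j\|_{U^1}^{2^{-s}}\prod_{j=m+r+1}^{m+k-1}1_{a_j=0}+p^{-c}$.

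The main obstacle---really the only place requiring care---is the bookkeeping of the Hahn--Banach parameters: one must verify that there is a genuine choice of $\delta_1,\delta_2,\delta_3,\delta_4>0$ simultaneously making \eqref{delta inequality} hold, making the $f_b$ and $f_c$ contributions genuinely power-saving (so they can be swallowed into the $p^{-c}$ error), and keeping $\delta_1$ small enough that the surviving $p^{\delta_1}=p^{c'}$ prefactor is harmless in the intended application (where it will be beaten by a density increment). Since all the quantitative dependence is polynomial in $p$ and the exponents $\beta$, $s$ coming from PET and degree-lowering are fixed once $m,k,P_m,\dots,P_{m+k-1}$ are fixed, such a choice exists; one can for instance take $\delta_1=\delta_3$ a sufficiently small multiple of $\beta 2^{-(s-2)(s+2)}$ and $\delta_2=\delta_4$ a sufficiently small multiple of $\delta_1$, then shrink $c$ to the minimum of the resulting power savings. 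I would also double-check the degenerate branch where one of the intermediate norms is already $O(p^{-c})$, in which case the conclusion holds trivially, so the rearrangements of the $\Omega/O$ statements above are legitimate.
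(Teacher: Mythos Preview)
Your plan is correct and follows the paper's proof essentially verbatim: PET to fix $s_0,\beta$; Hahn--Banach on $f_{m+r}$; discard the $f_b,f_c$ pieces; bound $|\langle F,\overline{f_a}\rangle|\leqslant p^{\delta_1}\|F\|_{U^{s_0}}$; degree-lower to $\|F\|_{U^2}$; then invoke Lemma~\ref{$U^1$ control of the dual}. Two small corrections: the spurious factor $\beta$ in your final exponent should not be there (the dual-norm step gives $p^{\delta_1}\|F\|_{U^{s_0}}$ with no $\beta$), and your ``for instance'' choice $\delta_1=\delta_3$ does not actually work when $\beta\leqslant 1/2$, since the $f_c$ error $p^{\delta_3(1-\beta)-\beta\delta_4}$ forces $\delta_4>\delta_3(1-\beta)/\beta$ while Hahn--Banach forces $\delta_4<\delta_1$---the paper takes $\delta_3$ a $\beta$-multiple of $\delta_1$ (specifically $\delta_1=2^{-s_1}c_0/2$, $\delta_3=\beta 2^{-s_1}c_0/4$, $\delta_4=(1-\beta)2^{-s_1}c_0/2$, $\delta_2=\beta 2^{-s_1}c_0/8$) to make all four constraints compatible.
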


Having established Lemma {\ref{$U^1$ control of the operator}}, it is straightforward to prove $\SSS(m,k,r+1)$; however, the argument is slightly different for $r=0$ and $r>0$. If $r=0$, then by Lemma \ref{$U^1$ control of the operator} we have 
\begin{align*}
    |\Lambda(f_0, ..., f_{m+k-1})| &\ll p^{c'} \prod_{j=m+1}^{m+k-1}1_{a_j=0} +p^{-c}.
\end{align*}
If not all of $a_{m+1}$, ..., $a_{m+k-1}$ are zero, then
\begin{align*}
    |\Lambda(f_0, ..., f_{m+k-1})| &\ll p^{-c}.
\end{align*}
Otherwise we are in the case $\SSS(m,1)$. Combining these two alternatives gives $\SSS(m,k,1)$. 

If $r>0$, we split each of $f_m$, ..., $f_{m+r-1}$ into $f_j = \EE f_j + (f_j-\EE f_j)$, and decompose $\Lambda$ accordingly. Then $\Lambda(f_0, ..., f_{m+k-1})$ splits into the main term
\begin{align*}
    \Lambda(f_0, ..., f_{m-1}, \EE f_m, ..., \EE f_{m+r-1}, f_{m+r}, ..., f_{m+k-1})
\end{align*}
and $2^r-1$ error terms, each of which involves at least one $f_j - \EE f_j$ for ${m\leqslant j\leqslant m+r-1}$. Using Lemma \ref{$U^1$ control of the operator}, each of the error terms has size $O(p^{-c})$; hence
\begin{align*}
   \Lambda(f_0, ..., f_{m+k-1}) &= \Lambda(f_0, ..., f_{m-1}, \EE f_m, ..., \EE f_{m+r-1}, f_{m+r}, ..., f_{m+k-1}) +O(p^{-c})\\
   &=\Lambda_{m, P_{m+r}, ... P_{m+k-1}}(f_0, ..., f_{m-1}, f_{m+r}, ..., f_{m+k-1})\prod_{j=m}^{m+r-1}\EE f_j +O(p^{-c}).
\end{align*}
Applying the $\SSS(m,k-r)$ case, we can split $\Lambda_{m, P_{m+r}, ... P_{m+k-1}}$
\begin{align*}
&\Lambda_{m, P_{m+r}, ... P_{m+k-1}}(f_0, ..., f_{m-1}, f_{m+r}, ..., f_{m+k-1})\\
&= \Lambda_m(f_0, ..., f_{m-1})\; \EE f_{m+r}\prod_{j=m+r+1}^{m+k-1}1_{a_j = 0}+O(p^{-c})
\end{align*}
and hence
\begin{align*}
    \Lambda(f_0, ..., f_{m+k-1}) = \Lambda_m(f_0, ..., f_{m-1})\prod_{j=m}^{m+r} \EE f_j \prod_{j=m+r+1}^{m+k-1}1_{a_j = 0} + O(p^{-c}).
\end{align*}
This proves $\SSS(m,k,r+1)$ for $r>0$.



\subsection{Proofs of Lemmas \ref{degree lowering}, \ref{$U^1$ control of the dual} and \ref{$U^1$ control of the operator}}
While in the previous section we outlined the proof of $\SSS(m,k)$ for $k>1$, here we derive the technical lemmas which are used in this proof.
\begin{proof}[Proof of Lemma \ref{degree lowering}]
This proof follows the path of Proposition 6.6 in \cite{peluse_prendiville_2019}. The main idea is to write the $U^s$ norm of the dual function $F$ as an average of the $U^2$ norms of derivatives of $F$, extract the maximum Fourier coefficients of $\Delta_{h_1, ..., h_{s-2}}F$, and show that for a dense proportion of $(h_1, ..., h_{s-2})$ these coefficients satisfy certain linear relations provided $||F||_{U^s}\gg p^{-c}$. If $s=3$ and $\phi(h)$ is the phase of the maximum Fourier coefficient of $\Delta_h F$, then we show that $\phi$ is constant on a dense proportion of $h$. For $s>3$, analogous relations are somewhat more complicated. These linear relations turn out to be sufficient to get a control of the $U^s$ norm of $F$ by its $U^{s-1}$ norm with polynomial bounds.

Using the definition of Gowers norms, we have
\begin{align*}
    \eta :=||F||_{U^s}^{2^s}=\EE_{h_1,...,h_{s-2}}||\Delta_{h_1,...,h_{s-2}}F||_{U^2}^4.
\end{align*}
Let $H_1=\{(h_1, ..., h_{s-2})\in\FF_p^{s-2}:||\Delta_{h_1,...,h_{s-2}}F||_{U^2}^4\geqslant\frac{1}{2}\eta\}$. To simplify the notation, let $\hh=(h_1, ...,h_{s-2})$ and $\EE_\hh:=\EE_{\hh\in\FF_p^{s-2}}$.
From the popularity principle (see e.g. Exercise 1.1.4 in \cite{tao_vu_2006}) it follows that $|H_1|\geqslant\frac{1}{2}\eta p^{s-2}$, and so
\begin{align}\label{Long expression 1}
    \frac{1}{4}\eta^2 &\leqslant\EE_{\hh}||\Delta_{\hh}F||_{U^2}^4\cdot 1_{H_1}(\hh).
\end{align} 
The $U^2$ inverse theorem, stated in Section \ref{section on Gowers norms}, implies that the square of the $U^2$ norm of a function is bounded by its maximum Fourier coefficient. Given $\Delta_{\hh} F$, let $\widehat{\Delta_{\hh} F}(\phi({\hh}))$ denote its maximum Fourier coefficient. Then the right hand side of (\ref{Long expression 1}) is bounded by
\begin{align}\label{Long expression 2}
    \nonumber
    \EE_{\hh}|\widehat{\Delta_{\hh}F}(\phi(\hh))|^2 1_{H_1}(\hh)&=\EE_\hh |\EE_x\Delta_{\hh} F(x) e_p(\phi(\hh) x)|^21_{H_1}(\hh)\\
    &=\EE_{x,x',\hh}\Delta_{\hh} F(x)\overline{\Delta_{\hh}F(x')}e_p(\phi(\hh)(x-x'))1_{H_1}(\hh).
\end{align} 
To simplify the already cumbersome notation, we denote $Q_j=P_j-P_{m+r}$ for $0\leqslant j\leqslant m+k-1$. Unpacking the definition of the dual function $F$, the expression (\ref{Long expression 2}) equals
\begin{align}\label{Long expression 3}
    &\EE_{x,x',\hh}\Delta_{\hh}\left(\EE_y\prod_{\substack{0\leqslant j \leqslant m+k-1,\\ j\neq m+r}} f_j(x+Q_j(y)) \right)\\
    \nonumber
    &\overline{\Delta_{\hh}\left(\EE_y\prod_{\substack{0\leqslant j \leqslant m+k-1,\\ j\neq m+r}} f_j(x'+Q_j(y))\right)}
    e_p(\phi(\hh)(x-x'))1_{H_1}(\hh).
\end{align}
After writing out the multiplicative derivatives, (\ref{Long expression 3}) is equal to
\begin{align}\label{Long expression 4}   
    &\EE_{x,x',\hh}\EE_{\yy,\yy'\in\FF_p^{\{0,1\}^{s-2}}}\prod_{\substack{0\leqslant j \leqslant m+k-1,\\ j\neq m+r}}\prod_{\ww\in\{0,1\}^{s-2}} \C^{|w|} f_j(x+\ww\cdot\hh+Q_j(\yy_\ww))\\ 
    \nonumber
    &\C^{|w|}\overline{f_j(x'+\ww\cdot\hh+Q_j(\yy'_\ww))}
    e_p(\phi(\hh)(x-x'))1_{H_1}(\hh).
\end{align}
The product in (\ref{Long expression 4}) contains $2^{s-2}$ copies of $f_j$ for each $j$ and each of $x$ and $x'$. In each of these copies the $y$-variable is different. We would like all the copies of $f_j$ to be expressed in terms of the same $y$-variable. To achieve this, we modify (\ref{Long expression 4}) by applying the Cauchy-Schwarz inequality $s-2$ times. First, (\ref{Long expression 4}) can be rewritten as
\begin{align}\label{Long expression 5}
    &\EE_{x,x',h_1,...,h_{s-3}}\EE_{\yy,\yy'\in\FF_p^{\{0,1\}^{s-2}}}\textbf{b}(x,x',h_1,...,h_{s-3},\yy,\yy')\EE_{h_{s-2}}\prod_{\substack{0\leqslant j \leqslant m+k-1,\\ j\neq m+r}}\prod_{\substack{\ww\in\{0,1\}^{s-2}\\ w_s=1}}\\
    \nonumber
    &\C^{|w|} f_j(x+\ww\cdot\hh+Q_j(\yy_\ww))\C^{|w|}\overline{f_j(x'+\ww\cdot\hh+Q_j(\yy'_\ww))}
    e_p(\phi(\hh)(x-x'))1_{H_1}(\hh).
\end{align}
By the Cauchy-Schwarz inequality and change of variables, (\ref{Long expression 5}) is bounded by
\begin{align}\label{Long expression 6}
    (&\EE_{x,x',h_1,...,h_{s-3}, h_{s-2},h'_{s-2}}\EE_{\yy,\yy'\in\FF_p^{\{0,1\}^{s-2}}}\prod_{\substack{0\leqslant j \leqslant m+k-1,\\ j\neq m+r}}\prod_{\substack{\ww\in\{0,1\}^{s-2}\\ w_s=1}}\C^{|w|}\\
    \nonumber
    & (f_j(x+\sum_{i=1}^{s-3}w_i h_i+w_{s-2} h_{s-2}+Q_j(\yy_\ww))\overline{f_j(x+\sum_{i=1}^{s-3}w_i h_i+w_{s-2} h'_{s-2}+Q_j(\yy_\ww))}
    \\\nonumber
    &\overline{f_j(x'+\sum_{i=1}^{s-3}w_i h_i+w_{s-2} h_{s-2} +Q_j(\yy'_\ww))}f_j(x'+\sum_{i=1}^{s-3}w_i h_i+w_{s-2} h'_{s-2} +Q_j(\yy'_\ww)))\\\nonumber
    &e_p((\phi(h_1,...,h_{s-3},h_{s-2})-\phi(h_1,...,h_{s-3},h'_{s-2}))(x-x'))\\\nonumber
    &1_{H_1}(h_1,...,h_{s-3},h_{s-2})
    1_{H_1}(h_1,...,h_{s-3},h'_{s-2}))^\frac{1}{2}.
\end{align}
The presence of so many terms in (\ref{Long expression 6}) comes from the fact that in the process of applying the Cauchy-Schwarz inequality and changing variables, each expression $E(h_{s-2})$ (depending possibly on other variables as well) is replaced by ${E(h_{s-2})\overline{E(h'_{s-2})}}$. Therefore the number of expressions in the product doubles, making     (\ref{Long expression 6}) rather lengthy. Applying Cauchy-Schwarz another $s-3$ times to $h_{s-3}, ..., h_1$ respectively, we bound (\ref{Long expression 6}) by
\begin{align}\label{intermediate long expression 1}
    &(\EE_{x,x',y,y',\hh,\hh'}\prod_{\substack{0\leqslant j \leqslant m+k-1,\\ j\neq m+r}}\prod_{\ww\in\{0,1\}^{s-2}}(1_{H^1}(\hh^{(\ww)}) \C^{|w|}f_j(x+\ww\cdot\hh^{(\ww)}+Q_j(y))\\
    \nonumber
    &\overline{\C^{|w|}f_j(x'+\ww\cdot\hh^{(\ww)}+Q_j(y'))})e_p((\sum_{\ww\in\{0,1\}^{s-2}}(-1)^{|w|}\phi(\hh^{(\ww)})(x-x')))^{\frac{1}{2^{s-2}}}.
\end{align}
where \[ \hh^{(\ww)}_i =
\begin{cases}
h_i, w_i = 0\\
h'_i, w_i = 1
\end{cases}
\]
The expression (\ref{intermediate long expression 1}) can be simplified to
\begin{align*}
    \left(\EE_{\hh,\hh'}\left|\EE_{x,y}\left(\prod_{\substack{0\leqslant j \leqslant m+k-1,\\ j\neq m+r}} g_j(x+P_j(y))\right)e_p \left(\psi(\hh,\hh')(x+P_{m+r}(y))\right)
    \right|^2\1_{\square(H_1)}(\hh,\hh')\right)^\frac{1}{2^{s-2}}
\end{align*}
where
\begin{align*}
    g_j(t):=\prod_{\ww\in\{0,1\}^{s-2}}\C^{|w|}f_j(t+\ww\cdot\hh^{(\ww)}),
\end{align*}
\begin{align*}
    \square(A):=\{(\hh,\hh')\in\FF_p^{2(s-2)}:\forall \ww\in\{0,1\}^{s-2}\hh^{(\ww)}\in A\}
\end{align*}
and
\begin{align*}
    \psi(\hh,\hh'):=\sum_{\ww\in\{0,1\}^{s-2}}(-1)^{|w|}\phi(\hh^{(\ww)}).
\end{align*}
Recall that for $m+r+1\leqslant j\leqslant m+k-1$, we have defined $f_j$ to be $f_j(x) = e_p(a_j x)$. Combining this with the assumption that $s>2$, we have that
\begin{align*}
    g_j(x+P_j(y))=e_p\left(a_j\sum_{\ww\in\{0,1\}^{s-2}}(-1)^{|w|}\ww\cdot\hh^{(\ww)}\right)
\end{align*}
for these values of $j$. This expression depends only on $\hh$ but not on $x$ or $P_j$, and so we incorporate $g_{m+r+1}$, ..., $g_{m+k-1}$ into the absolute value.
We thus obtain the estimate
\begin{align}\label{important intermediate expression}
    \EE_{\hh,\hh'}&\left|\EE_{x,y}\left(\prod_{\substack{0\leqslant j \leqslant m+r-1}} g_j(x+P_j(y))\right)e_p \left(\psi(\hh,\hh')(x+P_{m+r}(y)))\right)
    \right|^2\\\nonumber
    &\1_{\square(H_1)}(\hh,\hh')\geqslant\left(\frac{\eta}{2}\right)^{2^{s-1}}.
\end{align}

We are now able to apply the induction hypothesis. By $\SSS(m,k,r)$, the expression inside the absolute values equals $O(p^{-c})$ unless $\psi(\hh,\hh') = 0$. Therefore, the set
\begin{align*}
    H_2:=\left\{(\hh,\hh')\in \square(H_1): \psi(\hh,\hh') = 0\right\}
\end{align*}
has size at least 
\begin{align*}
    \left(\left(\frac{\eta}{2}\right)^{2^{s-1}}-O(p^{-c})\right)p^{2(s-2)}.
\end{align*}
In particular, there exists $\hh\in H_1$ such that the fiber
\begin{align*}
    H_3:=\{\hh': (\hh,\hh')\in H_2\}
\end{align*}
has size at least 
\begin{align*}
    \left(\left(\frac{\eta}{2}\right)^{2^{s-1}}-O(p^{-c})\right)p^{s-2}.
\end{align*}
Fix this $\hh$. We now show that the phases $\phi$ possess some amount of low-rank structure which we subsequently use to complete the proof of the lemma. By the definitions of $H_2$ and $H_3$, for each $\hh'\in H_3$ we have $\psi(\hh,\hh')=0$. Define
\begin{align*}
    \psi_i(\hh,\hh'):=(-1)^s\sum_{\substack{\ww\in\{0,1\}^{s-2},\\ w_1 = ... = w_{i-1}=1,\\ w_i = 0}}(-1)^{|w|}\phi(\hh^{(\ww)}).
\end{align*}
Note that, $\psi(\hh,\hh')=\phi(h'_1,...,h'_{s-2})-\psi_1(\hh,\hh')-...-\psi_{s-2}(\hh,\hh')$. Crucially, $\psi_i$ does not depend on $h'_1, ..., h'_i$. Thus, $\psi(\hh,\hh') = 0$ implies that
\begin{align*}
    \phi(h'_1,...,h'_{s-2})=\sum_{i=1}^{s-2}\psi_i(\hh,\hh').
\end{align*}
That is to say, $\phi(h'_1, ...,h'_{s-2})$ can be decomposed into a sum of $s-2$ functions, each of which does not depend on $h_i'$ for a different $i$.

To alleviate the pain that the reader may experience while struggling with the notation, we illustrate the aforementioned for $s=3$ and 4. For $s=3$, 
$$\psi(h,h')=\phi(h)-\phi(h')=\psi_1(h)-\phi(h').$$ Hence $\psi(h,h')=0$ implies that $\phi(h')=\phi(h)$. For $s=4$, 
\begin{align*}
    \psi(\hh,\hh') &=\phi(h_1,h_2)-\phi(h_1',h_2)-\phi(h_1,h_2')+\phi(h_1',h_2')\\
    &=\psi_1(\hh,\hh')-\psi_2(\hh,\hh')+\phi(h_1',h_2')
\end{align*}
 and so $\psi(\hh,\hh')=0$ implies that
\begin{align*}
    \phi(h_1',h_2') = \phi(h_1,h_2')+\phi(h_1',h_2)-\phi(h_1,h_2) = \psi_2(\hh,\hh')-\psi_1(\hh,\hh').
\end{align*}

We now estimate the expression
\begin{align}\label{Long expression 7}
    \EE_{\hh'}||\Delta_{\hh'}F||_{U^2}^4\1_{H_3}(\hh')
\end{align}
from above and below. From below, it is bounded by $$\frac{\eta}{2}\cdot \left(\left(\frac{\eta}{2}\right)^{2^{s-1}}-O(p^{-c})\right)\geqslant \left(\frac{\eta}{2}\right)^{2^{s}}-O(p^{-c}).$$
The upper bound is more complicated, and it relies on the fact that we can decompose $\phi(\hh')$ into a sum of $\psi_i$'s such that $\psi_i$ does not depend on $h'_i$. Using $U^2$-inverse theorem, (\ref{Long expression 7}) is bounded from above by:
\begin{align}\label{Long expression 8}
    \EE_{\hh'}\left|\widehat{\Delta_{\hh'}F}(\phi(\hh'))\right|^2\1_{H_3}(\hh') &= \EE_{\hh'}\left|\widehat{\Delta_{\hh'}F}\left(\sum_{i=1}^{s-2}\psi_i(\hh')\right)\right|^2\1_{H_3}(\hh').
\end{align}
By positivity, we can extend (\ref{Long expression 8}) to the entire $\FF_p^{s-2}$; that is, we have
\begin{align}
    \EE_{\hh'}\left|\widehat{\Delta_{\hh'}F}\left(\sum_{i=1}^{s-2}\psi_i(\hh')\right)\right|^2\1_{H_3}(\hh')
    \leqslant \EE_{\hh'}\left|\widehat{\Delta_{\hh'}F}\left(\sum_{i=1}^{s-2}\psi_i(\hh')\right)\right|^2.
\end{align}
Rewritting, we obtain that
\begin{align}\label{Long expression 9}
    \nonumber
    \EE_{\hh'}\left|\widehat{\Delta_{\hh'}F}\left(\sum_{i=1}^{s-2}\psi_i(\hh')\right)\right|^2 &= \EE_{\hh'}\left|\EE_x\Delta_{\hh'}F(x)e_p\left(\sum_{i=1}^{s-2}\psi_i(\hh')x\right)\right|^2\\
    &=\EE_{x,\hh',h_{s-1}}\Delta_{\hh',h_{s-1}}F(x)e_p\left(\sum_{i=1}^{s-2}\psi_i(\hh')h_{s-1}\right).
\end{align}
We apply Cauchy-Schwarz $s-2$ times to (\ref{Long expression 9}) to get rid of the phases $\psi_i(\hh')$. In the first application, we start by rewriting (\ref{Long expression 9}) as
\begin{align}\label{Long expression 10}
\EE_{\substack{x,h_2',...,\\ h'_{s-2},h_{s-1}}}\textbf{b}(x, h_2', ..., h'_{s-2}, h_{s-1})\EE_{h'_1}\Delta_{\substack{h'_2,...,h'_{s-2},h_{s-1}}}F(x+h'_1)e_p\left(\sum_{i=2}^{s-2}\psi_i(\hh')h_{s-1}\right)
\end{align}
and then we bound it by
\begin{align}\label{Long expression 11}
&(\EE_{x,h'_1, h''_1, h_2',..., h'_{s-2},h_{s-1}}\Delta_{h'_2,...,h'_{s-2},h_{s-1}}\left(F(x+h'_1)\overline{F(x+h''_1)}\right)\\
\nonumber
&e_p\left(\sum_{i=2}^{s-2}(\psi_i(h'_1,h'_2,...,h'_{s-2})-\psi_i(h''_1,h'_2...,h'_{s-2}))h_{s-1}\right))^{\frac{1}{2}}.
\end{align}
After repeatedly applying Cauchy-Schwarz in this manner, we get rid of all the phases and bound (\ref{Long expression 11}) by $||F||^2_{U^{s-1}}$. This proves the lemma.
\end{proof}

The second proof is simpler.
\begin{proof}[Proof of Lemma \ref{$U^1$ control of the dual}]

By $U^2$-inverse theorem,
$||F||^2_{U^2}\leqslant\max\limits_{\alpha\in\FF_p}|\hat{F}(\alpha)|$.    
By Lemma \ref{Vanishing of expression II}, this is $O(p^{-c})$ unless $\alpha = 0$, in which case $$\hat{F}(\alpha)=\Lambda_{m, P_m, ..., P_{m+r-1}, P_{m+r+1}, ..., P_{m+k-1}}(f_0,f_1,..., f_{m+r-1}, f_{m+r+1}, ...,f_{m+k-1}).$$ Thus,
\begin{align}\label{inequality in Lemma 9}
||F||^2_{U^2} &\leqslant 1_{\alpha=0}|\Lambda_{m, P_m, ..., P_{m+r-1}, P_{m+r+1}, ..., P_{m+k-1}}(f_0,f_1,..., f_{m+r-1}, f_{m+r+1}, ...,f_{m+k-1})| + O(p^{-c})\\  
\nonumber
&\leqslant|\Lambda_m(f_0, ..., f_{m-1})|\prod_{\substack{m\leqslant j\leqslant m+k-1,\\ j\neq m+r}}|\EE f_j| + O(p^{-c})
\end{align}
where the second inequality follows from applying $\SSS(m,k-1)$. Recalling that $f_j(t)=e_p(a_j t)$ for $m+r+1\leqslant j\leqslant m+k-1$ and combining it with (\ref{inequality in Lemma 9}), we get that
\begin{align*}
    ||F||^2_{U^2} \leqslant\min_{m\leqslant j\leqslant m+r-1}||f_j||_{U^1}\cdot\prod_{j=m+r+1}^{m+k-1}1_{a_j=0}+O(p^{-c}).
\end{align*}
Taking square roots on both sides and applying H\"{o}lder's inequality proves the lemma.
\end{proof}

Next we prove Lemma \ref{$U^1$ control of the operator} using the previous lemmas. 
\begin{proof}[Proof of Lemma \ref{$U^1$ control of the operator}]

Take $s=s_0$ and $\beta$ for which Lemma \ref{PET induction} holds. Using Lemma \ref{Hahn-Banach decomposition}, we decompose $f_{m+r}$ into
\begin{align*}
    f_{m+r} = f_a + f_b + f_c
\end{align*}
with $||f_a||_{U^{s_0}}^*\leqslant p^{\delta_1}$, $||f_b||_{L^1}\leqslant p^{-\delta_2}$, $||f_c||_{L^\infty}\leqslant p^{\delta_3}$, $||f_c||_{U^{s_0}}\leqslant p^{-\delta_4}$, and split the $\Lambda$ operator accordingly. The values of the parameters $\delta_1,\delta_2,\delta_3,\delta_4$ have to satisfy (\ref{delta inequality}) and will be determined later. The term involving $f_b$ is easy to bound using H\"{o}lder inequality
\begin{align*}
    |\langle F,f_b\rangle|\leqslant||F||_{L^\infty}||f_b||_{L^1}\leqslant p^{-\delta_2}.
\end{align*}
The term involving $f_c$ can also be bounded from above provided $\delta_4$ is sufficiently large compared to $\delta_3$
\begin{align*}
    |\langle F, \overline{f_c}\rangle| &=||f_c||_{L^\infty}\left|\left\langle F,\frac{\overline{f_c}}{||f_c||_{L^\infty}}\right\rangle\right|\\
    &\leqslant p^{\delta_3}\left(\left(\frac{p^{-\delta_4}}{p^{\delta_3}}\right)^\beta+ O(p^{-\beta})\right)\\
    &\ll p^{\delta_3(1-\beta)-\beta\delta_4}+p^{\delta_3-\beta}
\end{align*}
where in the second inequality we are using Lemma \ref{PET induction}.
Finally, the term involving $f_a$ can be bounded using dual inequality
\begin{align*}
    |\langle F, \overline{f_a}\rangle| &\leqslant||f_a||^*_{U^{s_0}}||F||_{U^{s_0}}\leqslant p^{\delta_1}||F||_{U^{s_0}}.
\end{align*}
Using the decomposition, we obtain the following bound on $\Lambda$ in terms of the $U^{s_0}$ norm of the dual function $F$
\begin{align*}
    |\Lambda(f_0,...,f_{m+k-1})| &\leqslant|\langle F, \overline{f_a}\rangle| + |\langle F, \overline{f_b}\rangle| + |\langle F, \overline{f_c}\rangle|\\
    &\leqslant p^{\delta_1}||F||_{U^{s_0}}+p^{-\delta_2} +p^{\delta_3(1-\beta)-\beta\delta_4}+p^{\delta_3-\beta}.
\end{align*}
From Lemma \ref{degree lowering} it follows that 
\begin{align*}
    ||F||_{U^2}= \Omega(||F||_{U^{s_0}}^{2^{(s_0-2)(s_0+2)}})-O(p^{-c}).
\end{align*}
Let $s_1=(s_0-2)(s_0+2)$. We thus have that
\begin{align*}
    |\Lambda(f_0,... ,f_{m+k-1})|\ll p^{\delta_1}||F||_{U^2}^{2^{-s_1}}+p^{\delta_1-2^{-s_1}c}+p^{-\delta_2}+ p^{\delta_3(1-\beta)-\beta\delta_4}+p^{\delta_3-\beta}.
\end{align*}
Using Lemma \ref{$U^1$ control of the dual}, we establish a $U^1$ control by $f_m, ..., f_{m+r-1}, f_{m+r+1}, ..., f_{m+k-1}$
\begin{align}\label{expression in proof of lemma 9}
    |\Lambda(f_0, ..., f_{m+k-1})| &\ll p^{\delta_1}\min_{\substack{m\leqslant j\leqslant m+r-1}}||f_j||_{U^1}^{2^{-s_1-1}}\cdot\prod_{j=m+r+1}^{m+k-1}1_{a_j=0}+p^{\delta_1-2^{-s_1}c}\\
    \nonumber&+p^{-\delta_2} + p^{\delta_3(1-\beta)-\beta\delta_4}+p^{\delta_3-\beta}.
\end{align}
Let $c_0$ be the value of $c$ appearing in (\ref{expression in proof of lemma 9}).
Setting the values of the parameters to be
\begin{align*}
    \delta_1 = 2^{-s_1}\frac{c_0}{2},\quad    
    \delta_2 = \beta2^{-s_1}\frac{c_0}{8},\quad
    \delta_3 &= \beta2^{-s_1}\frac{c_0}{4},\quad \rm{and} \quad
    \delta_4 = (1-\beta)2^{-s_1}\frac{c_0}{2}
\end{align*}
proves the lemma.
\end{proof}


\section{Upper bounds for subsets of $\FF_p$ lacking arithmetic progressions with $k$-th power common differences}\label{section on upper bounds for subsets of finite fields lacking arithmetic progressions with restricted differences}

We now switch gears, moving away from the progression (\ref{generalized union of AP and GP}) towards arithmetic progressions with common difference coming from the set of $k$-th powers. In this section, we prove Theorem \ref{Sets lacking arithmetic progressions with $k$-th power differences} assuming Theorem \ref{counting theorem for linear forms with restricted variables}. The argument goes much the same way as deriving Theorem \ref{main theorem, general case} from Theorem \ref{counting theorem for the more difficult configuration}.

First, we prove the following simple lemma which allows us to reduce to the case $k| p-1$.
\begin{lemma}\label{Q_k = Q_k'}
Let $k\in\NN_+$ and $Q_k$ be the set of $k$-th power residues in $\FF_p$. Then $Q_k=Q_{\gcd(k,p-1)}$.
\end{lemma}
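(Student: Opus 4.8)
The plan is to show the two multiplicative subgroups of $\FF_p^\times$ coincide, plus the shared fact that $0$ is a $k$-th power for every $k$. Write $d = \gcd(k,p-1)$. I will work entirely inside the cyclic group $\FF_p^\times$, which has order $p-1$; fix a generator $g$. The key observation is that an element $g^a$ is a $k$-th power residue if and only if the congruence $ka \equiv j \pmod{p-1}$ has a solution $a$ for the given exponent $j$, and by the standard theory of linear congruences this is solvable precisely when $d \mid j$. Hence the set of $k$-th power residues among the nonzero elements is exactly $\{g^j : d \mid j\}$, i.e.\ the unique subgroup of index $d$ in $\FF_p^\times$.

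Concretely, the steps are as follows. First I would note $0 \in Q_k$ and $0 \in Q_d$ trivially (take $0^k = 0$), so it suffices to compare nonzero elements. Next, for the inclusion $Q_k \subseteq Q_d$: if $x = y^k$ with $y = g^a$, then $x = g^{ka}$, and since $d \mid k$ we get $d \mid ka$, so $x$ lies in the subgroup generated by $g^d$; but every element of $\langle g^d\rangle$ is a $d$-th power (as $g^{db} = (g^b)^d$), giving $x \in Q_d$. For the reverse inclusion $Q_d \subseteq Q_k$: write $d = uk + v(p-1)$ for integers $u,v$ by B\'ezout. If $x = z^d$, then $x = z^{uk + v(p-1)} = (z^u)^k \cdot (z^{p-1})^v = (z^u)^k$ by Fermat's little theorem, so $x \in Q_k$. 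This closes the loop and proves $Q_k = Q_d$.

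There is really no hard part here — the only thing to be slightly careful about is handling $0$ separately (it is a $k$-th power for every positive $k$, so it contributes to both sides and causes no trouble) and making sure the exponent arithmetic is done modulo $p-1$ rather than modulo $p$. The B\'ezout step is what makes the nontrivial inclusion $Q_d \subseteq Q_k$ work cleanly without invoking the subgroup structure, though one could equally phrase both inclusions in the language of ``$Q_k$ is the unique subgroup of $\FF_p^\times$ of index $d$.'' Either way the argument is a couple of lines; I would present the B\'ezout version since it is the most self-contained.
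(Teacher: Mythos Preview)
Your proof is correct and essentially the same as the paper's: both exploit the cyclic structure of $\FF_p^\times$ to identify $Q_k$ with the unique subgroup of index $d=\gcd(k,p-1)$. The only cosmetic difference is that the paper gets the nontrivial inclusion by observing that $Q_k=\langle a^k\rangle$ and $Q_d=\langle a^d\rangle$ have the same cardinality $(p-1)/d$ and one contains the other, whereas you establish $Q_d\subseteq Q_k$ directly via B\'ezout; you yourself note these are interchangeable.
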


\begin{proof}
Since $\FF_p^\times$ is a cyclic group under multiplication, we can write it as $\FF_p^\times=\langle a| a^{p-1}=1\rangle$. Note that for each $k\in\NN$, $Q_k$ and $Q_{\gcd(k,p-1)}$ are subgroups of $\FF_p^\times$ of cardinality $\frac{p-1}{\gcd(k,p-1)}$, generated respectively by $a^k$ and $a^{\gcd(k,p-1)}$. The property $\gcd(k,p-1)|k$ moreover implies that $Q_k$ is a subgroup of $Q_{\gcd(k,p-1)}$, and so they must be equal.
\end{proof}

\begin{proof}[Proof of Theorem \ref{Sets lacking arithmetic progressions with $k$-th power differences}]
The set of $k$-th powers in $\FF_p$ is precisely $Q_k$, and by Lemma \ref{Q_k = Q_k'} it is the same as the set $Q_{\gcd(k,p-1)}$. Therefore we can assume that $k$ divides $p-1$, otherwise we replace $k$ with $\gcd(k,p-1)$. Suppose $A\subset\FF_p$ for $p\geqslant p_0$ of size $|A|=\alpha p$ lacks $m$-term arithmetic progressions with difference coming from the set of $k$-th powers. From Theorem \ref{counting theorem for linear forms with restricted variables} it follows that
\begin{align}\label{Counting equation for APs}
&\EE_{x,y}\1_A(x)\1_A(x+y)...\1_A(x+(m-1)y)\1_{Q_k}(y)\\
&=\frac{1}{k}\EE_{x,y}\1_A(x)\1_A(x+y)...\1_A(x+(m-1)y)+O\left(p^{-c}\right).\nonumber
\end{align}

Since $A$ lacks progressions with $k$-th power differences, the left-hand side of (\ref{Counting equation for APs}) is 0, and so we have
\begin{align}\label{Counting equation for APs, II}
    \EE_{x,y}\1_A(x)\1_A(x+y)...\1_A(x+(m-1)y)=O\left(p^{-c}\right).
\end{align}
Applying Lemma \ref{Averaging over progressions} to (\ref{Counting equation for APs}) gives $M^{-2}\ll p^{-c}$ where $M=M_m(\frac{1}{2}\alpha)$ and $M_m$ is the inverse function to $s_m$ on $(0,\alpha_0]$, $\alpha_0 = s_m(p_0)$. 
Since $M$ grows faster than polynomially in $\alpha^{-1}$ by Behrend's construction \cite{behrend_1946}, this gives $M_m\gg p^c$. Applying $s_m$ to both sides and noting that $s_m$ is decreasing, we obtain that $\alpha\leqslant 2s_m( Cp^c)$.

\end{proof}

\section{Counting theorem for the number of linear configurations in subsets of $\FF_p$ with variables restricted to the set of $k$-th powers}

This section is devoted to the proof of Theorem \ref{counting theorem for linear forms with restricted variables}. We will first show that without loss of generality, we can assume that $k_i$ divides $p-1$ for each $1\leqslant i\leqslant d$. This will simplify the notation in the rest of the argument. 

\begin{lemma}\label{Rewriting the counting operator}
We have 
\begin{align*}
    \EE_{x_1,...,x_d}\prod\limits_{i=1}^m f_j(L_i(x_1^{k_1},...,x_d^{k_d}))&=\EE_{x_1,...,x_d}\prod\limits_{i=1}^m  f_j(L_i(x_1^{k'_1},...,x_d^{k'_d}))\\
    &={k'_1...k'_d}\EE_{x_1,...,x_d}\prod\limits_{i=1}^m  f_j(L_i(x_1,...,x_d))\prod_{i=1}^d 1_{Q_{k'_i}}(x_i)+O\left(p^{-1}\right)
\end{align*} where $k_i':=\gcd(k_i,p-1)$ for each $1\leqslant i\leqslant d$. 
\end{lemma}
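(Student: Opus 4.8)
The plan is to establish the two equalities separately. The first equality, $\EE_{x_1,\dots,x_d}\prod_i f_i(L_i(x_1^{k_1},\dots,x_d^{k_d})) = \EE_{x_1,\dots,x_d}\prod_i f_i(L_i(x_1^{k_1'},\dots,x_d^{k_d'}))$, is purely algebraic and uses the fact established in Lemma \ref{Q_k = Q_k'} that the image of the $k_i$-th power map on $\FF_p^\times$ equals $Q_{k_i} = Q_{\gcd(k_i,p-1)}$. The key point is that when we substitute $x_i \mapsto x_i^{k_i}$ versus $x_i \mapsto x_i^{k_i'}$, the variable $x_i^{k_i}$ ranges over the subgroup $Q_{k_i}$ of $\FF_p^\times$ (together with $0$), hitting each element of $Q_{k_i}$ exactly $\frac{p-1}{|Q_{k_i}|} = \gcd(k_i,p-1) = k_i'$ times and $0$ once, and likewise $x_i^{k_i'}$ hits each element of $Q_{k_i'} = Q_{k_i}$ exactly $k_i'$ times and $0$ once. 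Hence the pushforward measures of $x_i \mapsto x_i^{k_i}$ and $x_i \mapsto x_i^{k_i'}$ on $\FF_p$ are identical. Since the variables $x_1,\dots,x_d$ are independent in the expectation, I would simply do this replacement one variable at a time (applying Fubini to isolate each $x_i$), concluding that the two averages are exactly equal — no error term needed here.

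For the second equality I would reparametrise the average over $\FF_p^d$ by replacing each factor ``average over $x_i$ of something in $x_i^{k_i'}$'' by ``average over $z_i$ of something in $z_i$, weighted by the number of preimages.'' Concretely, for a fixed $i$ and any function $\phi:\FF_p\to\CC$,
\begin{align*}
    \EE_{x_i\in\FF_p}\phi(x_i^{k_i'}) = \frac1p\,\phi(0) + \frac{1}{p}\sum_{z\in Q_{k_i'}} k_i'\,\phi(z) = k_i'\,\EE_{z\in\FF_p}\phi(z)\,1_{Q_{k_i'}}(z) + O(p^{-1}),
\end{align*}
where the $O(p^{-1})$ absorbs the single $z=0$ term (and any discrepancy from whether $0\in Q_{k_i'}$), using $|\phi|\le 1$ — which holds here since each $f_i$ is $1$-bounded and hence each partial product is $1$-bounded. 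Iterating this over $i=1,\dots,d$ and collecting the multiplicative constant $k_1'\cdots k_d'$ in front, the telescoping of the $d$ error terms each of size $O(p^{-1})$ (with an implied constant depending only on $d$ and the $k_i'$, hence on $d$ and the original $k_i$) yields the claimed identity with a single $O(p^{-1})$ error.

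The main obstacle — really the only subtlety — is bookkeeping the error terms cleanly when peeling off the variables one at a time: at each stage the ``function $\phi$'' being integrated is itself an average over the remaining variables of a $1$-bounded quantity, so one must check that $1$-boundedness is preserved through the induction and that the accumulated error stays $O(p^{-1})$ rather than blowing up. This is routine: there are only $d$ steps, $d$ is a fixed constant, and each step contributes $O(p^{-1})$ with a constant bounded by $k_1'\cdots k_d' \le k_1\cdots k_d$, so the total error is $O(p^{-1})$ as stated. A minor point worth stating explicitly is the convention for whether $0$ counts as a $k$-th power residue (the set $Q_{k_i'}$ in Lemma \ref{Q_k = Q_k'} is defined as a subgroup of $\FF_p^\times$, so $0\notin Q_{k_i'}$); either way the discrepancy is a single point and is swallowed by the $O(p^{-1})$ term.
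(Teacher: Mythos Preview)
Your proposal is correct and takes essentially the same approach as the paper. The only cosmetic difference is that you peel off the variables one at a time and telescope the $O(p^{-1})$ errors, whereas the paper handles all $d$ variables simultaneously and bounds the error by a union bound over the at most $dp^{d-1}$ tuples with some coordinate equal to zero; both arguments amount to the same counting.
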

\begin{proof}
By Lemma \ref{Q_k = Q_k'}, $Q_{k}=Q_{\gcd(k,p-1)}$ for each $k\in\NN_+$. Therefore the set of $k_i$-th power residues agrees with the set of $k'_i$-th power residues for each $1\leqslant i\leqslant d$. Consequently, the set of tuples $$\{(x_1^{k_1},...,x_d^{k_d}): (x_1,...,x_d)\in\FF_p^d\}$$ equals the set of tuples $$\{(x_1^{k'_1},...,x_d^{k'_d}): (x_1,...,x_d)\in\FF_p^d\},$$
and moreover each tuple $(x_1^{k_1},...,x_d^{k_d})$ appears in $\FF_p^d$ the same number of times as the tuple $(x_1^{k'_1},...,x_d^{k'_d})$. This implies the first equality, as the summations in both expressions are carried over the same sets of tuples the same number of times.

The second equality follows from the fact that each value of $y\in \FF_p^\times$ equals $x_i^{k'_i}$ for precisely $k'_i$ different values of $x_i\in\FF_p$. The error term $O\left(p^{-1}\right)$ corresponds to the cases when at least one of the variables $x_1,...,x_d$ is 0. Using union bound, there are at most $dp^{d-1}$ such cases, which together contribute at most $\frac{d}{p}$ to the expectation.
\end{proof}

We thus assume for the rest of this section that $k_1, ..., k_d$ are coprime to $p-1$. With this assumption, we now describe a useful expression for $\1_{Q_k}$ which is crucial in proving the error term in Theorem \ref{counting theorem for linear forms with restricted variables}. Let $a$ be a generator for the multiplicative group $\FF_p^\times$. Define the map
\begin{align*}
\chi_k: \FF_p^\times &\to \CC\\
a^l &\mapsto e_k(l).
\end{align*}
The function $\chi_k$ is thus a \emph{multiplicative character of order $k$}, i.e. a group homomorphism from $\FF_p^\times$ to $\CC^\times$ satisfying $\chi_k^k=1$. We extend $\chi_k$ to $\FF_p$ by setting $\chi_k(0)=0$. Then $\chi_k$ picks out $Q_k$, in the sense that $\chi_k(x)=1\iff$ $x\in Q_k$. Using the orthogonality of roots of unity, we can write
\begin{align}
\label{sum of characters}
    \1_{Q_k}(x)=\frac{1+\chi_k(x)+\chi_k(x)^2+...+\chi_k(x)^{k-1}}{k}-\frac{1}{k}\1_{\{0\}}(x).
\end{align}

We now use (\ref{sum of characters}) to replace each $\1_{Q_{k_i}}$ by a sum of characters in (\ref{main equation}). Using the multilinearity of the operator, we obtain a main term of the same form as in (\ref{main equation}), which corresponds to the terms in (\ref{sum of characters}) having $\1_{Q_{k_i}}$ replaced by $\frac{1}{k_i}$. Terms where $\1_{Q_{k_i}}$ is replaced by $\frac{1}{k_i}\1_{\{0\}}(x)$ are of size $O\left(p^{-1}\right)$, and there is a bounded number of them. It remains to deal with the terms that contain some $\frac{\chi^j_k(x)}{k}$ with $j>0$ but have no $\frac{1}{k_i}\1_{\{0\}}(x)$. Each such term is of the form 
\begin{align}\label{error term}
    \EE_{x_1,...,x_d}\prod\limits_{i=1}^m  f_j(L_i(x_1,...,x_d))\prod_{i\in S}\frac{\chi_{k_i}^{j_i}(x_i)}{k_i}
\end{align}
for a nonempty $S\subset\{1\leqslant i\leqslant d: k_i>1\}$ and $1\leqslant j_i\leqslant k_i-1$. From the fact that $k_i$ divides $d$ it follows that $\chi_{k_i}^{j_i}$ is also a character of order $k_i$, so without loss of generality we can take $j_i=1$ for each $1\leqslant i\leqslant d$.

Green and Tao proved that linear forms $L'_1(x_1, ..., x_d)$, ..., $L'_m(x_1, ..., x_d)$ are controlled by a Gowers norm \cite{green_tao_2010, tao_2012}: specifically, they showed that
\begin{align}\label{linear configurations 2}
    \left|\EE_{x_1,...,x_d}\prod_{j=1}^m g_j(L'_i(x_1,...,x_d))\right|\leqslant\min\limits_{1\leqslant j\leqslant m}||f_j||_{U^s}
\end{align}
whenever for each $1\leqslant i\leqslant m$ one can partition $\{L'_j: j\neq i\}$ into $s+1$ classes such that $L'_i$ does not lie in the span of each of them. The lowest $s-1$ for which this is true is called \emph{Cauchy-Schwarz complexity}, or \emph{CS-complexity} of the system of linear forms $L'_1, ..., L'_m$. The only case when such $s$ may not exist is if two linear forms $L'_i$ and $L_j'$ are the same up to scaling. Otherwise we can partition linear forms into such classes: in the worst case, each of $\{L'_j: j\neq i\}$ forms a separate class, in which case the CS-complexity is $m-2$. This extreme case occurs in arithmetic progressions, for instance: the operator
\begin{align*}
    \EE_{x,y}f_0(x)f_1(x+y)...f_{m-1}(x+(m-1)y)
\end{align*}
is bounded by $||f_{i}||_{U^{m-1}}$ for each $0\leqslant i\leqslant m-1$, and the system of linear forms $\{x, x+y, ..., x+(m-1)y\}$ has CS-complexity $m-2$. 

We assumed specifically that no two linear forms $L_i$, $L_j$ are scalar multiples, and that $L_i$ is never a scalar multiple of $e_j$. From these assumptions we obtain the following lemma, which is essentially a restatement of Green and Tao's result tailored to our context.
\begin{lemma}\label{bounding the error term}
For an arbitrary character $\chi_{k_i}$ of order $k_i$, we have the bound
\begin{align}\label{U^s control}
    \left|\EE_{x_1,...,x_d}\prod\limits_{j=1}^m  f_j(L_i(x_1,...,x_d))\prod_{i\in S}\frac{\chi_{k_i}(x_i)}{k_i}\right|\leqslant\left(\prod_{i\in S}\frac{1}{k_i}\right)\min_{i\in S}||\chi_{k_i}||_{U^s}
\end{align}
where $s-1$ is the CS-complexity of the system
\begin{align}\label{specific system of linear forms}
\{L_1, ..., L_m\}\cup\{x_j: j\in S\}
\end{align}
In particular, one can take $s=m+|S|-1\leqslant m+d-1$.

\end{lemma}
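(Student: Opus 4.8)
The plan is to read this as a direct instance of Green and Tao's generalized von Neumann inequality (\ref{linear configurations 2}), applied not to the system $\{L_1,\dots,L_m\}$ itself but to the enlarged system obtained by appending the coordinate forms $x_j$ for $j\in S$, with the characters $\chi_{k_i}$ serving as the functions attached to those appended forms.

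First I would pull the harmless scalar $\prod_{i\in S}\frac{1}{k_i}$ out of the average, reducing the task to bounding
\[
\left|\EE_{x_1,\dots,x_d}\prod_{j=1}^m f_j(L_j(x_1,\dots,x_d))\prod_{i\in S}\chi_{k_i}(x_i)\right|.
\]
Each $\chi_{k_i}$ is $1$-bounded (using the convention $\chi_{k_i}(0)=0$), and each $x_i$ is a genuine linear form, so (\ref{linear configurations 2}) applies as soon as one checks its combinatorial hypothesis: that the enlarged system $\{L_1,\dots,L_m\}\cup\{x_j:j\in S\}$ has finite CS-complexity, equivalently that no two of its forms are scalar multiples of one another. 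Note that the argument uses only the $1$-boundedness of the weights, so the particular choice of order-$k_i$ character is immaterial, as the lemma claims.

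Verifying this non-proportionality is the only step with genuine content, and it is precisely where the hypotheses of Theorem \ref{counting theorem for linear forms with restricted variables} get used: the $L_i$ are pairwise linearly independent by assumption; the forms $x_j,x_{j'}$ with $j\neq j'$ in $S$ are distinct coordinate projections, hence non-proportional; and no $L_i$ equals $ax_j$ for $j\in S$, since $j\in S$ forces $k_j>1$ and the theorem excludes linear forms of the shape $ax_j$ for such $j$. With non-proportionality established, the crude partition putting each of the $m+|S|-1$ remaining forms in a class of its own shows that the enlarged system has CS-complexity at most $m+|S|-2$ --- no single form lies in the span (i.e.\ the set of scalar multiples) of another --- so (\ref{linear configurations 2}) holds with the Gowers norm $U^s$ for $s=m+|S|-1$; by monotonicity of Gowers norms we may use this $s$ (or any larger value) even if the true complexity is smaller, and since $|S|\leqslant d$ we get $s\leqslant m+d-1$.

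Finally, (\ref{linear configurations 2}) bounds the displayed average by the $U^s$ norm of the function attached to any single form of the enlarged system; restricting that choice to the appended forms gives the bound $\min_{i\in S}||\chi_{k_i}||_{U^s}$, and reinstating the factor $\prod_{i\in S}\frac{1}{k_i}$ produces exactly (\ref{U^s control}). I do not expect any real obstacle here: the lemma is in essence a repackaging of Green and Tao's theorem in the form needed later, the one point requiring care being the non-proportionality check above --- in particular the role of the hypothesis $L_i\neq ax_j$ whenever $k_j>1$, without which an appended form $x_j$ would coincide with some $L_i$ and the complexity would fail to be finite.
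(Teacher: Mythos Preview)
Your proposal is correct and follows essentially the same approach as the paper: verify that the enlarged system $\{L_1,\dots,L_m\}\cup\{x_j:j\in S\}$ has pairwise linearly independent forms (using the hypotheses of Theorem~\ref{counting theorem for linear forms with restricted variables}), deduce that its CS-complexity is at most $m+|S|-2$, and apply Green--Tao's generalized von Neumann inequality. The paper's own proof is terser and frames the non-proportionality check by observing that (\ref{specific system of linear forms}) is a subset of the maximal system (\ref{the largest system of linear forms}), but the content is identical.
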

\begin{proof}
By assumption, all forms in the system
\begin{align}\label{the largest system of linear forms}
\{L_1, ..., L_m\}\cup\{x_j: 1\leqslant j\leqslant d, k_j >1\}    
\end{align}
are pairwise linearly independent. Since (\ref{specific system of linear forms}) is a subset of (\ref{the largest system of linear forms}), all forms in (\ref{specific system of linear forms}) are also pairwise linearly independent. Therefore the CS-complexity of this system is finite, and is at most $m+|S|-2$ because the system (\ref{specific system of linear forms}) consists of $m+|S|-1$ linear forms.
\end{proof}

It thus follows that the error term in (\ref{main equation}) is controlled by Gowers norms of characters. The multiplicative property of characters makes it easy to bound their Gowers norms using tools such as Weil's bound.

\begin{lemma}[Weil's bound]
Let $\chi$ be a nonprincipal multiplicative character of $\FF_p$ of order $k$, and let $P\in\FF_p[x]$ be a polynomial with $r$ distinct roots in the splitting field. If $P$ is not a $k$-th power, then
\begin{align*}
    \left|\EE_x\chi(P(x))\right|\leqslant(r-1)q^{-\frac{1}{2}}.
\end{align*}
\end{lemma}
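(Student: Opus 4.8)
The plan is to derive this from the Riemann Hypothesis for curves over finite fields (Weil), which is the classical route; the deep input I would simply cite, and the actual work lies in a conductor computation. Two degenerate cases are immediate: the bound is vacuous when $P$ is constant, and when $\deg P=1$ the map $x\mapsto P(x)$ permutes $\FF_p$, so $\EE_x\chi(P(x))=\EE_y\chi(y)=0$, matching $r-1=0$. So assume $r\geqslant 2$.

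For the main case I would work with the rank-one Kummer sheaf $\mathcal L=\mathcal L_{\chi(P)}$ on $\mathbb A^1_{\FF_p}$, whose Frobenius trace function is $x\mapsto\chi(P(x))$. The hypothesis that $P$ is not a constant times a $k$-th power guarantees that $\mathcal L$ is geometrically nontrivial. Since $p\nmid k$, $\mathcal L$ is \emph{tamely} ramified, precisely at those roots of $P$ whose multiplicity is not divisible by $k$ and at $\infty$ when $k\nmid\deg P$; call these the $s$ branch points, so $s\leqslant r+1$. By the Grothendieck--Ogg--Shafarevich formula, $\chi_c(\mathbb P^1\setminus\{\text{branch points}\},\mathcal L)=1\cdot(2-s)-0=2-s$ (no Swan contribution, by tameness), and since $\mathcal L$ is nontrivial and irreducible, $H^0_c=H^2_c=0$, so the $L$-function $L(T)=\det(1-FT\mid H^1_c)$ is a polynomial of degree exactly $s-2\leqslant r-1$. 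One can reach the same count more classically by computing the genus of the Kummer curve $y^k=P(x)$ via Riemann--Hurwitz and using that its zeta function factors over the order-$k$ Kummer characters, the $\chi$-piece contributing the factor $L$.

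Now Weil's Riemann Hypothesis gives that every inverse root $\alpha_i$ of $L$ satisfies $|\alpha_i|=p^{1/2}$, and the Grothendieck--Lefschetz trace formula gives $\sum_{x\in\FF_p}\chi(P(x))=-\sum_i\alpha_i$ (the roots of $P$ contribute $\chi(0)=0$), whence
\[
\Bigl|\sum_{x\in\FF_p}\chi(P(x))\Bigr|\leqslant(\deg L)\,p^{1/2}\leqslant(r-1)\,p^{1/2},
\]
i.e.\ $|\EE_x\chi(P(x))|\leqslant(r-1)p^{-1/2}$. The only serious ingredient is the Riemann Hypothesis / Hasse--Weil bound, which I would cite from a standard reference (Weil's original work, or Lidl--Niederreiter, or Iwaniec--Kowalski). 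The step most prone to slips, and hence the main obstacle, is the ramification bookkeeping: confirming that $\chi\circ P$ is tame at each branch point with vanishing Swan conductor and at infinity, so that the degree of $L$ equals $(\#\text{branch points})-2$ and is therefore at most $r-1$.
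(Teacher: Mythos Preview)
The paper does not prove this lemma at all; it is quoted as a classical black box, with the subsequent corollary attributed to Iwaniec--Kowalski. Your sketch is the standard route (Kummer sheaf, tameness from $p\nmid k$, Grothendieck--Ogg--Shafarevich to bound $\deg L$, then Weil's Riemann Hypothesis for the eigenvalue bound) and is essentially correct, so there is nothing to compare against.

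One small correction: the constant case is not ``vacuous''. If $P=c\in\FF_p^\times$ is not a $k$-th power residue then $P$ is not a $k$-th power in $\FF_p[x]$, $r=0$, and the asserted bound $(r-1)p^{-1/2}$ is negative while $|\EE_x\chi(c)|=1$. The lemma as stated tacitly assumes $P$ nonconstant (or, more honestly, that $P$ is not a constant times a $k$-th power, which you yourself flag); you should say this explicitly rather than call the case vacuous. Your $r=1$ analysis and the main branch-point count $s\le r+1$, giving $\deg L=s-2\le r-1$, are fine.
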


In particular, we use the following corollary, which is Corollary 11.24 in Iwaniec \& Kowalski \cite{iwaniec_kowalski_2004}.
\begin{lemma}[Corollary to Weil's bound]\label{Corollary to Weil's bound}
Let $\chi$ be a nonprincipal multiplicative character of $\FF_p$, and let $b_1,...,b_{2r}\in\FF_p$. If one of them is different from the others, then
\begin{align*}
    \left|\EE_x\chi((x-b_1)...(x-b_r))\overline{\chi}((x-b_{r+1})...(x-b_{2r}))\right|\leqslant2r p^{-\frac{1}{2}}.
\end{align*}
\end{lemma}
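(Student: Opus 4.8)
The plan is to deduce this directly from the Weil bound stated above, the name notwithstanding. Write $k$ for the order of $\chi$, so $k \geqslant 2$ (as $\chi$ is nonprincipal) and $k \mid p-1$. For $y \in \FF_p^\times$ the value $\chi(y)$ is a $k$-th root of unity, so $\overline{\chi(y)} = \chi(y)^{-1} = \chi(y)^{k-1} = \chi(y^{k-1})$; since $k - 1 \geqslant 1$ this identity persists at $y = 0$ with the convention $\chi(0) = 0$. Hence $\overline{\chi} = \chi^{k-1}$ as a function on $\FF_p$, and by multiplicativity of $\chi$ the quantity to be bounded equals $\EE_x \chi(P(x))$ with
\[
P(x) := \prod_{i=1}^{r}(x - b_i) \cdot \prod_{i = r+1}^{2r}(x - b_i)^{k-1} \in \FF_p[x].
\]

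The first thing I would check is that $P$ is not a $k$-th power (not even up to a scalar multiple), which is the hypothesis required by the Weil bound. Collect the $b_i$ by value: if an element $b$ occurs $s$ times among $b_1, \dots, b_r$ and $t$ times among $b_{r+1}, \dots, b_{2r}$, then $b$ is a root of $P$ of multiplicity $s + t(k-1) \equiv s - t \pmod{k}$. By hypothesis some $b_j$ differs from all the other $b_i$, so for it $(s, t) = (1, 0)$ or $(0, 1)$, whence its multiplicity in $P$ equals $1$ or $k - 1$ --- a positive integer not divisible by $k$. Thus $P$ has a root of multiplicity $\not\equiv 0 \pmod{k}$, so it is not of the form $c\, Q(x)^k$, and in particular not a $k$-th power.

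Next I would bound the number of distinct roots of $P$. Being a product of linear factors over $\FF_p$, the polynomial $P$ splits completely, and its distinct roots in the splitting field are exactly the distinct values among $b_1, \dots, b_{2r}$, of which there are at most $2r$. Applying the Weil bound to $\chi$ and $P$ therefore yields
\[
\left| \EE_x \chi(P(x)) \right| \leqslant (2r - 1)\, p^{-1/2} \leqslant 2r\, p^{-1/2},
\]
which is the claimed estimate.

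I do not anticipate a real obstacle: the entire content is the observation that the hypothesis ``one of the $b_i$ is different from the others'' is exactly what produces a root of $P$ whose multiplicity is coprime to $k$, so that the Weil bound applies. The remainder is bookkeeping, notably applying $\overline{\chi} = \chi^{k-1}$ consistently (including at the zeros of the linear factors, where both sides vanish) and noting that the relevant count of distinct roots is at most $2r$ rather than $\deg P = rk$, which is what makes the bound usable.
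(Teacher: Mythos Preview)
Your argument is correct: writing $\overline{\chi}=\chi^{k-1}$, collecting the expression as $\EE_x\chi(P(x))$ for the monic polynomial $P(x)=\prod_{i\leqslant r}(x-b_i)\prod_{i>r}(x-b_i)^{k-1}$, and observing that the distinguished $b_j$ contributes a root of multiplicity $1$ or $k-1$ (hence $\not\equiv 0\pmod k$) is exactly the standard reduction to the Weil bound, and the count of at most $2r$ distinct roots gives the constant as stated.

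There is nothing to compare against in the paper: the lemma is not proved there but simply quoted as Corollary~11.24 of Iwaniec--Kowalski, with the preceding Weil bound likewise stated without proof. Your derivation is precisely how one deduces this corollary from the Weil bound, so you have filled in what the paper leaves to the reference.
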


With this corollary, we can easily estimate the Gowers norms of nonprincipal multiplicative characters.
\begin{lemma}[Gowers norms of characters]\label{Gowers norms of characters}
If $\chi$ is a nonprincipal multiplicative character of $\FF_p$ of order $k$ and $s$ is a natural number, then 
\begin{align*}
    ||\chi||_{U^s}\leqslant 2p^{-2^{-(s+1)}}.
\end{align*}
\end{lemma}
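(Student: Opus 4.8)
The plan is to expand the Gowers norm $\|\chi\|_{U^s}^{2^s}$ directly from its definition and reduce it, via Hölder's inequality and the Cauchy--Schwarz--Gowers manipulations already recalled in Section~\ref{section on Gowers norms}, to an average of one-variable character sums to which Lemma~\ref{Corollary to Weil's bound} applies. First I would write
\[
\|\chi\|_{U^s}^{2^s}=\EE_{x,h_1,\dots,h_s}\prod_{\ww\in\{0,1\}^s}\C^{|w|}\chi\!\left(x+\ww\cdot\hh\right),
\]
and use the multiplicativity of $\chi$ to observe that, for fixed $\hh=(h_1,\dots,h_s)$, the inner product over $x$ is exactly $\EE_x \chi(R_1(x))\overline{\chi}(R_2(x))$, where $R_1$ (resp.\ $R_2$) is the product of the $2^{s-1}$ linear factors $x+\ww\cdot\hh$ with $|w|$ even (resp.\ odd). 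Each of $R_1,R_2$ has degree $2^{s-1}$, and their roots are $\{-\ww\cdot\hh:|w|\ \text{even}\}$ and $\{-\ww\cdot\hh:|w|\ \text{odd}\}$ respectively.

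Next I would split the average over $\hh$ according to whether the multiset of roots of $R_1$ coincides with that of $R_2$. On the ``generic'' set of $\hh$ where the root multisets differ, Lemma~\ref{Corollary to Weil's bound} (with $r=2^{s-1}$) gives $|\EE_x\chi(R_1(x))\overline{\chi}(R_2(x))|\leqslant 2^s p^{-1/2}$, and since $|\chi|\leqslant 1$ pointwise this contributes at most $2^s p^{-1/2}$ to $\|\chi\|_{U^s}^{2^s}$. On the ``degenerate'' set $D$ of $\hh$ where the two root multisets agree, one only uses the trivial bound $1$ for the $x$-average, so the contribution is $|D|/p^s$. The key step is therefore to show $|D|\ll p^{s-1}$, i.e.\ the degeneracy conditions cut down the dimension by at least one. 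This holds because if the root multisets of $R_1$ and $R_2$ coincide, then in particular the sum of the roots of $R_1$ equals the sum of the roots of $R_2$; a direct count shows $\sum_{|w|\ \text{even}}\ww\cdot\hh-\sum_{|w|\ \text{odd}}\ww\cdot\hh=\pm(h_1+\cdots+h_s)$ up to a nonzero constant, which forces a nontrivial linear relation among $h_1,\dots,h_s$ and hence confines $\hh$ to a hyperplane (one must check the relevant combinatorial coefficient is nonzero in $\FF_p$, which follows since it is a fixed nonzero integer for $p$ large; for small $p$ the statement is vacuous as the right-hand side exceeds~$1$). Combining, $\|\chi\|_{U^s}^{2^s}\leqslant 2^s p^{-1/2}+O(p^{-1})\leqslant 2^{s+1}p^{-1/2}$ for $p$ large, and taking $2^s$-th roots yields $\|\chi\|_{U^s}\leqslant 2^{(s+1)/2^s}p^{-1/2^{s+1}}\leqslant 2 p^{-2^{-(s+1)}}$ once $s\geqslant 1$, since $(s+1)/2^s\leqslant 1$ for such $s$ (and the $s=0,1$ cases are handled directly).

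The main obstacle I anticipate is bookkeeping the degenerate set $D$ precisely enough: one needs not just that $|D|$ is smaller than $p^s$ but that it is $O(p^{s-1})$ with an implied constant independent of $p$, and this requires identifying at least one genuinely nontrivial polynomial (better, linear) constraint on $\hh$ that holds throughout $D$. The cleanest route is the symmetric-function argument sketched above: equality of root multisets implies equality of all elementary symmetric polynomials, and the first one (the sum of roots) already gives a linear constraint $c(h_1+\cdots+h_s)=0$ with $c\in\ZZ\setminus\{0\}$ a binomial-type coefficient, so for $p\nmid c$ this pins $\hh$ to a hyperplane. An alternative, if one wants to avoid even computing that coefficient, is to induct on $s$ using the identity $\|\chi\|_{U^s}^{2^s}=\EE_{h}\|\Delta_h\chi\|_{U^{s-1}}^{2^{s-1}}$ together with the fact that $\Delta_h\chi(x)=\chi(x+h)\overline{\chi(x)}$ is itself of the form amenable to Weil for $h\neq 0$; but the direct approach is more transparent and gives the stated explicit constant, so that is the one I would write up.
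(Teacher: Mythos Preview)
Your overall strategy---expand $\|\chi\|_{U^s}^{2^s}$, use multiplicativity to reduce to an average over $\hh$ of Weil-type sums in $x$, apply Lemma~\ref{Corollary to Weil's bound} on the non-degenerate $\hh$, and bound the degenerate set trivially---is exactly the paper's. The paper simply takes the degenerate set to be $\{\hh=0\}$ (invoking the corollary whenever the values $\ww\cdot\hh$ are not all equal), whereas you work with the a priori larger set $D$ on which the even and odd root multisets coincide; either choice leads to the same final bound.

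However, your argument for $|D|\ll p^{s-1}$ via the first elementary symmetric function is incorrect for $s\geqslant 2$. One computes
\[
\sum_{|w|\ \text{even}}\ww\cdot\hh\ -\ \sum_{|w|\ \text{odd}}\ww\cdot\hh\ =\ \sum_{i=1}^s h_i\sum_{\ww\in\{0,1\}^s}(-1)^{|w|}w_i,
\]
and for each fixed $i$ the inner sum vanishes: pair $\ww$ with its flip in any coordinate $j\neq i$ (which exists since $s\geqslant 2$); this preserves $w_i$ and reverses the sign $(-1)^{|w|}$. Thus $e_1$ of the even roots always equals $e_1$ of the odd roots, so no constraint on $\hh$ is obtained. (Your claim is only correct at $s=1$, where the lemma is trivial anyway.)

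An easy fix is to use the constant term rather than the sum of roots. Since $\ww=\mathbf 0$ has even weight, $0$ is always a root of $R_1$, whence $R_1=R_2$ forces
\[
R_2(0)=\prod_{|w|\ \text{odd}}(\ww\cdot\hh)=0,
\]
so $\hh$ lies in the union of the $2^{s-1}$ hyperplanes $\{\ww\cdot\hh=0\}$ with $|w|$ odd. This gives $|D|\leqslant 2^{s-1}p^{s-1}$, and your estimate becomes $\|\chi\|_{U^s}^{2^s}\leqslant 2^s p^{-1/2}+2^{s-1}p^{-1}$, from which the stated bound follows as you indicated.
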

The reader may also consult \cite{fouvry_kowalski_michel_2013} for a more general discussion of Gowers norms of functions on $\FF_p$ of a strongly algebraic nature.
\begin{proof}
By definition, the $U^s$ norm of $\chi$ is given by the following expression
\begin{align*}
    ||\chi||_{U^s}^{2^s} &= \EE_{h_1,...,h_s}\EE_x \prod_{\ww\in\{0,1\}^s}C^{|w|}\chi(x+\ww\cdot \hh)\\
    &= \EE_{h_1,...,h_s}\EE_x \chi\left(\prod_{\ww\in\{0,1\}^s, |w|\;\rm{even}}(x+\ww\cdot \hh)\right)\overline{\chi}\left(\prod_{\ww\in\{0,1\}^s, |w|\;\rm{odd}}(x+\ww\cdot \hh)\right)\\
    &\leqslant\EE_{h_1,...,h_s}\left|\EE_x \chi\left(\prod_{\ww\in\{0,1\}^s, |w|\;\rm{even}}(x+\ww\cdot \hh)\right)\overline{\chi}\left(\prod_{\ww\in\{0,1\}^s, |w|\;\rm{odd}}(x+\ww\cdot \hh)\right)\right|.
\end{align*}
If $\ww\cdot \hh$ are not all equal, then by Lemma \ref{Corollary to Weil's bound} we have
\begin{align*}
    \left|\EE_x \chi\left(\prod_{\ww\in\{0,1\}^s, |w|\;\rm{even}}(x+\ww\cdot \hh)\right)\overline{\chi}\left(\prod_{\ww\in\{0,1\}^s, |w|\;\rm{odd}}(x+\ww\cdot \hh)\right)\right|\leqslant 2^s p^{-\frac{1}{2}}.
\end{align*}

The only possibility for $\ww\cdot \hh$ being equal for all $\ww\in\{0,1\}^s$ is when $h_1=...=h_s=0$, which happens with probability $p^{-s}$. Thus
\begin{align*}
    ||\chi||_{U^s}^{2^s}\leqslant 2^s p^{-\frac{1}{2}}+p^{-s}
\end{align*}
and so
\begin{align*}
    ||\chi||_{U^s}\ll p^{-2^{-(s+1)}}.
\end{align*}
\end{proof}

Applying the results of Lemma \ref{Gowers norms of characters} to Lemma \ref{bounding the error term}, we see that the error term in (\ref{main equation}) is of the size $O\left(p^{-c}\right)$, which proves Theorem \ref{counting theorem for linear forms with restricted variables}.

\section{Further discussion}
There are many directions in which one could try to extend the results of this paper, in particular Theorem \ref{counting theorem for the more difficult configuration}. One of the questions one might ask is whether there is a discorrelation result for progressions of the form
\begin{align*}
    x, x+Q(y), ..., x+(m-1)Q(y), x+P_m(y), ..., x+P_{m+k-1}(y)
\end{align*}
where $Q$ has degree greater than 1 while $P_m, ..., P_{m+k-1}$ are linearly independent and presumably satisfy a further technical assumption of algebraic independence similar to one in Theorem \ref{counting theorem for the more difficult configuration}. Combining methods used in the proofs of Theorems \ref{counting theorem for the more difficult configuration} and \ref{counting theorem for linear forms with restricted variables}, one can easily derive a statement of the form:
\begin{theorem}\label{counting theorem for the more difficult configuration with y^k}
Let $m,k,l\in\NN_+$ and $P_m$, ..., $P_{m+k-1}$ be polynomials in $\ZZ[y]$ such that $$a_m P_m + ... + a_{m+k-1}P_{m+k-1}$$ has degree at least $m$ unless $a_m = ... = a_{m+k-1}=0$ (in particular, $P_m$, ..., $P_{m+k-1}$ are linearly independent and each of them has degree at least $m$). Suppose $f_0, ..., f_{m+k-1}$ are 1-bounded functions from $\FF_p$ to $\CC$. Then
\begin{align}
    &\EE_{x,y}\prod_{j=0}^{m-1}f_j(x+jy^l)\prod_{j=m}^{m+k-1}f_j(x+P_j(y^l))\\
    \nonumber
    &=\left(\EE_{x,y}\prod_{j=0}^{m-1}f_j(x+jy)\right)\prod_{j=m}^{m+k-1}\EE f_j + O(p^{-c})
\end{align}
where all the constants are positive and depend on $m, k, l$ and polynomials $P_m, ..., P_{m+k-1}$ but not on $f_0, ..., f_{m+k-1}$.
\end{theorem}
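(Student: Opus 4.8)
The plan is to reduce Theorem \ref{counting theorem for the more difficult configuration with y^k} to the combination of Theorem \ref{counting theorem for the more difficult configuration} and the machinery developed in Section \ref{section on upper bounds for subsets of finite fields lacking arithmetic progressions with restricted differences}. The key observation is that the configuration $x, x+jy^l, x+P_j(y^l)$ is obtained from the configuration $x, x+jy, x+P_j(y)$ simply by restricting the $y$-variable to the set of $l$-th power residues $Q_l$. First I would apply Lemma \ref{Q_k = Q_k'} to reduce to the case $l \mid p-1$, replacing $l$ by $l' := \gcd(l, p-1)$ if necessary; this changes neither the set of values $y^l$ ranges over nor the multiplicities with which they are attained, so the counting operator on the left-hand side is unchanged up to relabelling. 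Then, writing $\1_{Q_l}$ via the character expansion (\ref{sum of characters}), I would substitute $y = z^l$ and rewrite
\begin{align*}
    \EE_{x,y}\prod_{j=0}^{m-1}f_j(x+jy^l)\prod_{j=m}^{m+k-1}f_j(x+P_j(y^l)) = l\cdot\EE_{x,z}\left(\prod_{j=0}^{m-1}f_j(x+jz)\prod_{j=m}^{m+k-1}f_j(x+P_j(z))\right)\1_{Q_l}(z) + O(p^{-1}),
\end{align*}
where the $O(p^{-1})$ accounts for $z=0$, exactly as in Lemma \ref{Rewriting the counting operator}.

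Next I would expand $\1_{Q_l}(z) = \frac{1}{l}\bigl(1 + \chi_l(z) + \dots + \chi_l(z)^{l-1}\bigr) - \frac{1}{l}\1_{\{0\}}(z)$ using a multiplicative character $\chi_l$ of order $l$, and distribute over the sum by multilinearity. The $\frac{1}{l}$ term contributes precisely $\EE_{x,z}\prod_{j=0}^{m-1}f_j(x+jz)\prod_{j=m}^{m+k-1}f_j(x+P_j(z))$, to which we apply Theorem \ref{counting theorem for the more difficult configuration} directly, obtaining the claimed main term plus $O(p^{-c})$. The $\1_{\{0\}}(z)$ term contributes $O(p^{-1})$. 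The remaining terms, one for each $1\leqslant i\leqslant l-1$, are of the form
\begin{align*}
    \EE_{x,z}\prod_{j=0}^{m-1}f_j(x+jz)\prod_{j=m}^{m+k-1}f_j(x+P_j(z))\chi_l(z)^i,
\end{align*}
and we must show each of these is $O(p^{-c})$. Since $l \mid p-1$, $\chi_l^i$ is itself a nonprincipal multiplicative character, so it suffices to bound the expression with an arbitrary nonprincipal multiplicative character $\chi$ in place of $\chi_l^i$.

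The main obstacle is precisely this last bound: showing that the $m$-term arithmetic progression together with $P_m, \dots, P_{m+k-1}$, twisted by a nonprincipal multiplicative character of the common difference, discorrelates. This does not follow immediately from Theorem \ref{counting theorem for the more difficult configuration} since $\chi$ is not $1$-bounded away from $0$ and, more importantly, it is a function of $y$ rather than of one of the linear-or-polynomial images of $x$. The cleanest route is to run the PET induction / degree-lowering argument of Section \ref{main section} directly on this twisted operator: one applies the generalized von Neumann / PET machinery (Lemma \ref{PET induction}) to control the operator by a Gowers norm of one of the $f_j$, then uses the Hahn--Banach decomposition and degree-lowering exactly as in the proof of Lemma \ref{$U^1$ control of the operator}, with the character $\chi(z)$ simply coming along as an additional weight depending only on $z$; since the final main term one would extract vanishes unless all phases are zero and $\chi$ is nonprincipal, the whole twisted operator is $O(p^{-c})$. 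Alternatively, and more simply, one can absorb $\chi(z)$ by a preliminary Cauchy--Schwarz in $x$ to remove $f_0$, after which the $z$-variable can be handled by Weyl differencing combined with Weil's bound (Lemma \ref{Corollary to Weil's bound}) in the style of Lemma \ref{Gowers norms of characters}, reducing to the case already covered. I expect the PET-based approach to be the robust one, as it mirrors the structure already in place; the routine verification that the character weight does not interfere with any step of that argument is the only real work. Collecting the main term from the $\frac{1}{l}$ piece, the $O(p^{-1})$ from the degenerate terms, and the $O(p^{-c})$ from each of the $l-1$ character-twisted terms, and multiplying through by $l$, yields the theorem.
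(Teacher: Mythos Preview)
Your proposal is correct and follows precisely the route the paper indicates: the paper gives no detailed proof of this theorem, merely stating that it follows by ``combining methods used in the proofs of Theorems~\ref{counting theorem for the more difficult configuration} and~\ref{counting theorem for linear forms with restricted variables}'', which is exactly your character expansion followed by Theorem~\ref{counting theorem for the more difficult configuration} on the principal-character term. Your identification of the twisted terms as the only genuine work is accurate, and your route (a) is the intended one; one small point worth making explicit is that after the degree-lowering machinery has replaced each $f_m,\ldots,f_{m+k-1}$ by its mean, the surviving ``main term'' is the weighted linear operator $\EE_{x,z}\prod_{j=0}^{m-1}f_j(x+jz)\chi(z)$, and it is here that the method of Theorem~\ref{counting theorem for linear forms with restricted variables} enters---viewing $z$ as an additional linear form in the system $\{x,x+z,\ldots,x+(m-1)z,z\}$, Lemma~\ref{bounding the error term} bounds this by $\|\chi\|_{U^m}$, which is $O(p^{-c})$ by Lemma~\ref{Gowers norms of characters}. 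Your alternative (b) is less convincing as stated, since a single Cauchy--Schwarz in $x$ merely differences the character rather than removing it; route (a) is the robust one.
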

This is a version of Theorem \ref{counting theorem for the more difficult configuration} where variable $y$ is restricted to lie in the set of $l$-th powers. It essentially says that restricting the variables to the set of $l$-th powers does not matter. For instance, this theorem allows us to prove that a set $A\subset\FF_p$ lacking progressions of the form
\begin{align*}
    x, x+y^l, ..., x+(m-1)y^l, x+y^{ml}, ..., x+y^{(m+k-1)l}
\end{align*}
has size at most
\[ |A|\ll\begin{cases}
p^{-c},\; &m = 1,2,\\
p\frac{(\log \log p)^4}{\log p},\; &m = 3,\\
\frac{p}{(\log p)^{c}},\; &m = 4,\\
\frac{p}{(\log\log p)^{c}}, \; &m>4
\end{cases}
\]
where the implied constant depends on $k, m, l$ and $c$ depends on $m$ only. Note that the bounds here are of the same shape as the bounds in Theorem \ref{main theorem, special case}: this is because the proof of this corollary is identical to the proof of Theorem \ref{main theorem, general case}. 

The drawback of this theorem is that it essentially only works for polynomials $P_m', ..., P'_{m+k-1}$ that can be expressed as polynomials in $y^l$, i.e. $P'_i(y)=P_j(y^l)$ for some $P_j$. For instance, it allows us to handle $$x, x+y^2, x+2y^2, x+y^6$$ but not 
\begin{align*}
    x,\; x+y^2,\; x+2y^2,\; x+y^5 \quad {\rm{or}} \quad x,\; x+y^2,\; x+2y^2,\; x+y^5.
\end{align*}
Replacing $P_m(y^l), ..., P_{m+k-1}(y^l)$ in the statement of the theorem by $P_m(y)$, ..., $P_{m+k-1}(y)$ would require a completely different approach. We have an argument that would allow us to replace $P_j(y^l)$ by $P_j(y)$ for $m=3$ and possibly $m=4$, however it has two serious downsides. First, the argument only works if the minimal degree of $P_j$'s is unreasonably large depending on $m$ and $l$ - it in fact would have to be greater than the minimal value $s$ obtained by applying Lemma \ref{PET induction} to $x,\; x+y^l,\; ...,\; x+(m-1)y^l,\; x+P_m(y),\; ...,\; x+P_{m+k-1}(y)$, which has rather poor dependence on $m$ and degrees of $P_m, ..., P_{m+k-1}$. Second, the method does not generalize to higher $m$ without resorting to higher order Fourier analysis. For this reason, we do not present this argument here, hoping to find a more robust version of it in the future.


\end{document}